\documentclass[10pt]{amsart}

\usepackage[T1]{fontenc}
\usepackage[utf8]{inputenc}
\usepackage[english]{babel}

\usepackage{amssymb,bm,fix-cm,graphicx,mathtools,parskip,thmtools}
\usepackage{newtxtext,newtxmath}
\usepackage[scr=boondox,cal=boondoxo,bb=boondox,frak=euler]{mathalfa}
\usepackage{microtype}
\usepackage{csquotes,amsfonts,amsmath,amsthm}
\usepackage{longtable}
\usepackage{hyperref}
\usepackage{cleveref}

\allowdisplaybreaks[4]

\def\epsilon{\varepsilon}

\def\ZZ{\mathbb{Z}}
\def\QQ{\mathbb{Q}}
\def\RR{\mathbb{R}}
\def\CC{\mathbb{C}}

\def\cA{\mathcal{A}}

\def\cC{\mathcal{C}}

\def\cJ{\mathcal{J}}
\def\cK{\mathcal{K}}

\def\cQ{\mathcal{Q}}

\def\cS{\mathcal{S}}
\def\cT{\mathcal{T}}

\def\cX{\mathcal{X}}
\def\cY{\mathcal{Y}}

\def\SS{\texorpdfstring{$\mathcal{S}$}{}}
\def\btau{\bm{\tau}}

\newcommand{\hyp}{\mathrm{hyp}}
\newcommand{\RV}{\mathrm{RV}}
\newcommand{\Rtop}{\mathrm{t}}
\newcommand{\Rbot}{\mathrm{b}}
\newcommand{\Leb}{\mathrm{Leb}}
\newcommand{\Rec}{\mathrm{Rec}}
\newcommand{\pr}{\mathfrak{p}}
\newcommand{\Id}{\mathrm{Id}}
\newcommand{\Es}{\mathrm{E}^\mathrm{s}}
\newcommand{\tr}{{\mbox{\raisebox{0.33ex}{\scalebox{0.6}{$\intercal$}}}}}
\newcommand{\ort}{{\mbox{\raisebox{0.33ex}{\scalebox{0.6}{$\perp$}}}}}
\newcommand{\diff}{\mathrm{d}}
\newcommand{\R}{\mathrm{R}}
\newcommand{\D}{\mathrm{D}}
\newcommand{\col}{\mathrm{col}}

\usepackage[style=alphabetic,natbib=true,backend=biber,giveninits=true,minalphanames=3,maxalphanames=4,maxbibnames=99]{biblatex}
\newtheorem{thm}{Theorem}[section]
\newtheorem{conv}{Convention}
\newtheorem{lem}[thm]{Lemma}
\newtheorem{prop}[thm]{Proposition}
\newtheorem{cor}[thm]{Corollary}

\theoremstyle{definition}

\theoremstyle{remark}

\begin{document}

	\title{Topological weak mixing for certain linear involutions of hyperelliptic type}
	
	\author{Felipe Arbulú}
	\address{
	Laboratoire Amiénois de Mathématique Fondamentale et Apliquée,
	CNRS-UMR 7352,
	Université de Picardie Jules Verne,
	33 rue Saint Leu,
	Amiens,
	France.}
	\email{farbulu@u-picardie.fr}

	\author{Bastián Espinoza}
	\address{
	Département de Mathématiques,
	Université de Liège,
	Allée de la Découverte 12 (B37),
	B-4000 Liège,
	Belgium.}
	\email{baespinoza@uliege.be}

	\author{Alejandro Maass}
	\address{Departamento de Ingeniería Matemática,
	Centro de Modelamiento Matemático and Millennium Institute Center for Genome Regulation,
	Santiago,
	Chile,
	Universidad de Chile and IRL-CNRS 2807,
	Beauchef 851,
	Santiago, Chile.}
	\email{amaass@dim.uchile.cl}

	\begin{abstract}
		A linear involution is an injective piecewise isometry defined on a pair of disjoint intervals.
		They are defined by a combinatorial data given by a generalized permutation and a length vector.
		As it was done for interval exchange transformations, it is conjectured that, except for some combinatorial data, a typical linear involution is measure-theoretically weakly mixing.
		In this direction, one may first explore the question of topological weak mixing for topological models of linear involutions.
		In this article we prove topological weak mixing for the natural symbolic codings of typical linear involutions defined by some generalized permutations of hyperelliptic type.
		We consider the generalized permutation
		{\footnotesize
			\[
				\sigma_{s,r}
				=
				\begin{pmatrix}
					0 & A & 1 & 2 & \cdots & s & A & s+1 & s+2 & \cdots & s+r \\
					s+r & \cdots & s+2 & s+1 & B & s & \cdots & 2 & 1 & B & 0
				\end{pmatrix}.
			\]
		}
		We prove that the natural symbolic coding of a typical linear involution defined by a generalized permutation in the Rauzy class of $\sigma_{s,r}$ is topologically weakly mixing, provided that it has at least a simple letter and its associated genus is sufficiently large.
	\end{abstract}

	\maketitle
	\markboth{F. ARBULÚ, B. ESPINOZA AND A. MAASS}{TOPOLOGICAL WEAK MIXING FOR CERTAIN LINEAR INVOLUTIONS OF HYPERELLIPTIC TYPE}

	\section{Introduction}

	Interval exchange transformations form an important class of dynamical systems of geometric origin exhibiting a rich variety of ergodic and spectral behaviors \cites{Via06, Yoc10, Zor06, Kea75, Kat80, Mas82, Vee82, AF07}.
	They arise as Poincaré maps of translation flows on translation surfaces.
	It follows from the work of Katok that interval exchange transformations are not mixing in the measure-theoretic sense \cite{Kat80}.
	The question of whether a typical interval exchange transformation which is not of rotation type is measure-theoretically weakly mixing remained open until the celebrated work of Avila and Forni \cite{AF07}, which settled this conjecture.
	Since then, measure-theoretic weak mixing for typical directional flows on translation surfaces has been studied.
	We refer to \cite{AD16} for the question of measure-theoretic weak mixing for almost every directional flow on non-arithmetic Veech surfaces, and to \cite{AHCF24} for recent results on measure-theoretic weak mixing for polygonal billiards.
    Before measure-theoretic weak mixing was established for typical interval exchange transformations, Nogueira and Rudolph proved topological weak mixing in \cite{NR97}, later extended to translation flows by Lucien in \cite{Luc98}.
    These works introduced foundational techniques that were subsequently adapted to the measure-theoretic setting.

	Linear involutions were introduced by Danthony and Nogueira in \cites{DN88, DN90} as a natural generalization of interval exchange transformations.
	In our setting, these transformations arise as Poincaré maps induced on two copies of an interval by a non-orientable measured foliation on an orientable surface.
	As in the case of interval exchange transformations, a typical linear involution was shown to be minimal and uniquely ergodic \cite{DN88}.
	However, for linear involutions the question of weak mixing (in both senses considered above) is less clear.
	As with interval exchange transformations, linear involutions have discontinuities in their domains.
	We therefore consider topological models of minimal linear involutions that correspond to the natural symbolic codings of their orbits with respect to the partition given by the intervals defining the involution.
	For a typical linear involution, both the involution and its natural symbolic coding are uniquely ergodic, and the corresponding measure-preserving dynamical systems are isomorphic.
	Then both measure theoretic and topological weak mixing can be studied through the (measurable or continuous) eigenvalues of the system: these notions correspond to the absence of non-trivial eigenvalues.
	Several necessary and sufficient conditions for a complex number to be a continuous or a measurable eigenvalue of a topological or measurable dynamical system defined on a Cantor space have been proposed in the literature; see, for instance, \cites{DFM19, BCE26}.

	In this article, we investigate the question of topological weak mixing for the natural symbolic codings of typical linear involutions.
	A first step is to provide $\cS$-adic representations of these natural symbolic codings.
	We describe such $\cS$-adic representations in \Cref{thm:RV_adic}, where the associated directive sequences are obtained from the Rauzy--Veech induction algorithm for linear involutions.
	A key technical point is that the resulting morphisms are not proper, in contrast with the classical setting of interval exchange transformations.
	Recent advances in the study of continuous eigenvalues of $\cS$-adic subshifts of this type apply to our setting \cite{BCE26}.
	In particular, they show that the natural symbolic coding of a typical linear involution possesses a non-trivial letter-coboundary.

	We study linear involutions defined by certain generalized permutations of hyperelliptic type.
	These permutations are natural objects, as they provide representatives of certain strata of meromorphic quadratic differentials on Riemann surfaces \cites{Lan04, Lan08, Zor08}; see the table below.
	Moreover, the half-translation surfaces associated with these permutations possess a central symmetry, since they arise as branched double covers of the Riemann sphere.
	As in the case of interval exchange transformations, it is conjectured that a typical linear involution is measure-theoretically weakly mixing, provided that a suitable geometric quantity is sufficiently large.
	Some obstructions to measure-theoretically weak mixing arise naturally for linear involutions: if every letter is duplicated in the generalized permutation, then $-1$ is a rational eigenvalue of the system; and, in the hyperelliptic setting, if $s = r = 0$, then the resulting measure-preserving dynamical system is isomorphic to a circle rotation.

	\begin{longtable}[ht!]{|c|c|c|c|}
			\hline
			$s$ & $r$ & Connected component & Genus
			\\
			\hline
			$2k+1$ & $2j+1$ & $\cQ(4j+2, 4k+2)^\hyp$ & $k+j+2$
			\\
			\hline
			$2k+1$ & $2j$ & $\cQ(2j-1,2j-1,4k+2)^\hyp$ & $k+j+1$
			\\
			\hline
			$2k$ & $2j+1$ & $\cQ(4j+2,2k-1,2k-1)^\hyp$ & $k+j+1$
			\\
			\hline
			$2k$ & $2j$ & $\cQ(2j-1,2j-1,2k-1,2k-1)^\hyp$ & $k+j$
			\\
			\hline
		\noalign{\vskip 0.5em}
		\caption{Some representatives of hyperelliptic components of the strata of meromorphic quadratic differentials defined on Riemann surfaces.}
		\label{table:connected_components}
	\end{longtable}

	Our main result is the following:
	
	\smallbreak
	\begin{thm} \label{thm:main}
		Consider a generalized permutation $\pi$ in the Rauzy class of $\sigma_{s,r}$ that has at least a simple letter and let $g$ be its associated genus.
		Suppose that $g \geq 3$ if $s$ and $r$ are even and $g \geq 5$ in any other case.
		Then for almost all admissible length vector $\lambda$ the natural symbolic coding of a linear involution defined by $(\pi, \lambda)$ is topologically weakly mixing.
	\end{thm}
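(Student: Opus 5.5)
The strategy follows the Nogueira--Rudolph scheme, made effective through the $\cS$-adic description of \Cref{thm:RV_adic}. For a typical $\lambda$, the natural symbolic coding $X_{\pi,\lambda}$ is the $\cS$-adic subshift generated by a directive sequence $(\tau_n)_n$ read off from the Rauzy--Veech induction path of $(\pi,\lambda)$. Its incidence matrices $M_n$ are products of elementary Rauzy--Veech matrices. Because the morphisms are not proper, the classical Host-type criteria do not apply directly, so I would use the necessary condition of \cite{BCE26}. In the form I need, it says: if $e^{2\pi i\alpha}$ is a continuous eigenvalue, then, after accounting for a letter-coboundary $h$, there is a vector $v\in\RR^{\cA}$ such that the vector $\alpha\, {}^{\tr}\!M_{[0,n)}\mathbf 1 + {}^{\tr}\!M_{[0,n)} h$ converges to $0$ modulo $\ZZ^{\cA}$ along the sequence. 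This must hold at least along a positive-frequency subsequence of return times to a fixed compact piece of the Rauzy--Veech parameter space. Here $\mathbf 1$ is the height vector. The coboundary appears because the non-trivial letter-coboundary mentioned in the introduction allows eigenvalues such as $-1$ in principle. The hypothesis that $\pi$ has a simple letter is what should rule out the trivial solution coming from the coboundary.

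Step 1 is to fix the coboundary. I would compute it explicitly for the Rauzy class of $\sigma_{s,r}$: it should be supported on duplicated letters with values in $\{0,1/2\}$. Using a simple letter, I would show that $h$ contributes only a fixed rational shift. The problem then reduces to showing that the set of $\lambda$ (projectively) for which some $\alpha\notin\ZZ$ satisfies ${}^{\tr}\!M_{[0,n)}(\alpha\mathbf 1 + w)\to 0 \bmod \ZZ^{\cA}$ has measure zero, with $w$ ranging over a finite set. Step 2 is the standard Veech/Nogueira--Rudolph reduction. Convergence to $0$ mod $\ZZ$ forces $\alpha\mathbf 1+w$ to lie, up to an integer vector, in the stable space $\Es(\lambda)$ of the Rauzy--Veech cocycle. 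This requires exponential convergence along the returns, which follows from bounded distortion on the compact piece together with the integrality of the matrices. Step 3 is the dimension count. For linear involutions the cocycle restricted to the relevant invariant subspace, an $H_1^-$-type subspace of the orientation double cover, has Lyapunov spectrum $\theta_1 > \theta_2 \ge \dots$. One then needs the number of strictly negative exponents to be large enough that a Fubini/transversality argument, as in \cite{NR97} or Veech's criterion, shows the line $\RR\mathbf 1 + \ZZ^{\cA} + w$ meets $\Es(\lambda)$ only at integer points for almost every $\lambda$. Concretely, the ambient dimension minus $\dim\Es$ must be at least $2$, so that a one-parameter family hits it on a null set.

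Step 4, which I expect to be the main obstacle, is proving the needed non-degeneracy of the spectrum in the hyperelliptic components. The hyperelliptic symmetry (the involution $\iota$ of the branched double cover of the sphere) splits the cocycle into invariant and anti-invariant parts, and it can force exponents to vanish or coincide. The genus thresholds $g\ge 3$ (both $s,r$ even) and $g\ge 5$ (otherwise) should be exactly what guarantees enough nonzero exponents on the part that sees $\mathbf 1$ after this splitting. I would first try to avoid the full Avila--Viana-type simplicity. Instead I would prove only that the stable space of the relevant block has codimension $\ge 2$, using the table: compute the dimension of the anti-invariant homology for each parity case, and argue that the Zariski closure of the monoid generated by Rauzy--Veech matrices in that block is large (symplectic), so that no exponent vanishes. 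Once $\dim\Es$ is large enough, Step 3 finishes the argument. The remaining check is that the exceptional set is a countable union of null sets, one for each $w$ and each integer translate, which gives the statement of \Cref{thm:main}.
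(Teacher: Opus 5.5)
\textbf{There is a genuine gap, in Steps 3--4.} You never supply a mechanism that turns a dimension count into ``the set of bad $\lambda$ is Lebesgue-null.''

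A Fubini/transversality argument is not available as stated. The map $\lambda\mapsto\Es(\pi,\lambda)$ is only measurable, since it is determined by the whole Rauzy--Veech path of $\lambda$. Also, $\Es(\pi,\lambda)\oplus\langle c_\pi\rangle\subseteq\langle\lambda\rangle^\ort$, so the only candidate point on your line is $t(\lambda)e+m$, and it is pinned down by $\lambda$. Hence there is no family to integrate over. For each fixed $m$, membership is a yes/no property of $\lambda$ alone, and proving that this set is null is exactly the hard part.

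Your dimension requirement is also internally inconsistent. Asking for codimension $\ge 2$ is an upper bound on $\dim\Es$. You then try to secure it by making the number of negative exponents, i.e.\ $\dim\Es$, ``large enough.'' Excluding eigenvalues needs the stable space to be small. That upper bound comes for free from symplecticity: $\dim\Es_{\bar{\D}^\R}(\pi,\lambda)\le|\cA|-g_\pi$ for a.e.\ $\lambda$ (\Cref{lem:lyapunov}, \Cref{lem:contracting}). So the step you flag as the main obstacle (non-vanishing exponents, Zariski density on an anti-invariant block) is unnecessary and aimed at the wrong inequality. It also does not produce the genus thresholds of the theorem.

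\textbf{The missing idea is the Nogueira--Rudolph argument as the paper runs it.}

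\emph{Localisation.} Assume $\cS_{e',m'}(\sigma_{s,r})$ has positive measure. Recurrence, a cycle with positive matrix and bounded distortion (\Cref{lem:density_point}, \Cref{prop:intersection}) then give a single non-trivial pair $(\bar h,\bar m)$ and a positive-measure set of $\lambda$. These $\lambda$ are simultaneously bad for every $(\bar hM_\delta,\bar mM_\delta)$, with $\delta$ ranging over an explicit finite family of cycles at $\sigma_{s,r}$.

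\emph{Forcing rows into the stable space.} Each such $M_\delta$ acts as $z\mapsto z+\sum_i\beta^i_\delta(z)w^i_\delta$, with $w^i_\delta$ in the row span of $\Omega_{\sigma_{s,r}}$ (\Cref{lem:linear_action}). Non-triviality keeps the coefficients nonzero (\Cref{lem:non_trivial}); this is where the simple letter is used. Subtracting yields $\langle v_0,\dots,v_{s+r},v_A+v_B\rangle\cap\langle\lambda\rangle^\ort\subseteq\Es\oplus\langle c_{\sigma_{s,r}}\rangle$ (\Cref{prop:inclusion}).

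\emph{Where the genus thresholds come from.} They come from comparing $\dim\langle v_0,\dots,v_{s+r},v_A+v_B\rangle-1$ (\Cref{lem:linear_dependence}) with $|\cA|-g_\pi+1$.

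\textbf{Two smaller points.}
\begin{enumerate}
\item In Step 1, the coboundary space is the line $\langle c_\pi\rangle$ with $c_\pi=\pm1$ on top/bottom duplicate letters (\Cref{lem:c_pi}). Its coefficient is an arbitrary real number; it is not a finite set of shifts with values in $\{0,1/2\}$. The paper absorbs it as the extra summand $\oplus\langle c_\pi\rangle$, which is the ``$+1$'' in the count, using $c_\pi M_\gamma=c_{\pi'}$.
\item In Step 2, for \emph{continuous} eigenvalues the condition of \cite{BCE26} holds along the whole sequence. Integrality alone then gives $t\pr(e)+m\in\Es\oplus\langle c_\pi\rangle$ (\Cref{prop:necessary_condition}). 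Positive-frequency subsequences, exponential convergence and distortion estimates are not needed; they belong to the measurable Veech criterion.
\end{enumerate}
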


    We remark that topological weak mixing of the natural symbolic coding implies topological weak mixing, in the sense of Nogueira and Rudolph \cite[Theorem 6.5]{NR97}, of the corresponding linear involution; see \cite[Section 6]{NR97} for further details.

	To prove \Cref{thm:main}, we first provide an $\cS$-adic representation of the natural symbolic coding of a linear involution that has the Keane property.
	The directive sequence defining this natural symbolic coding is obtained from the Rauzy--Veech induction algorithm; in particular, we make use of the fact that the resulting incidence matrices are symplectic.
	We then apply a recent necessary condition for a complex number to be a continuous eigenvalue of the constructed symbolic coding \cite{BCE26}.
	The main difference from the case of interval exchange transformations lies in the existence of a non-trivial letter-coboundary, which contributes with an additional dimension to the stable space appearing in the well-known necessary condition in that setting.
	Finally, we use a Nogueira--Rudolph type argument \cite{NR97} to rule out the remaining locus of possible continuous eigenvalues for hyperelliptic generalized permutations.

	The article is organized as follows.
	In \Cref{sec:linear_involutions} we review the basic definitions and background of linear involutions.
	In \Cref{sec:natural_codings} we provide $\cS$-adic representations of the natural symbolic codings of linear involutions.
	In \Cref{sec:continuous_eigenvalues} we give a necessary condition for a complex number to be a continuous eigenvalue of such natural symbolic codings.
	Finally, in \Cref{sec:proof} we prove \Cref{thm:main}.

	\section{Linear involutions} \label{sec:linear_involutions}

	\subsection{Generalized permutations} In this section we define linear involutions and we recall the Rauzy--Veech induction algorithm on linear involutions by Danthony and Nogueira \cite{DN90} and combinatorially for generalized permutations by Boissy and Lanneau \cite{BL09} (see also the work by Avila and Resende \cite{AR12}).

	Let $\cA$ be a finite set of cardinality $d \geq 2$ and $\ell, m$ be positive integers satisfying $\ell + m = 2d$.
	A \emph{generalized permutation} of type $(\ell, m)$ is a two-to-one map $\pi \colon \{1, \dotsc, 2d\} \to \cA$.
	We usually represent such a map by a table with two rows 

	{\small
		\[
			\pi =
			\begin{pmatrix}
				\pi(1) & \pi(2) & \cdots & \pi(\ell) \\
				\pi(\ell+1) & \pi(\ell+2) & \cdots & \pi(\ell+m)
			\end{pmatrix}.
		\]
	}
	An involution $\sigma \colon \{1, \dotsc, 2d\} \to \{1, \dotsc, 2d\}$ is defined naturally from a generalized permutation by the rules $\sigma(i) \neq i$ and $\pi(\sigma(i)) = \pi(i)$ for every $i \in \{1, \dotsc, 2d\}$.
	That is, $\{i, \sigma(i)\}$ are the two preimages of the letter $\pi(i) = \pi(\sigma(i))$ by $\pi$.
	We say that a letter is \emph{duplicate} if both of its occurrences are in the same row of previous representation of $\pi$ and \emph{simple} otherwise.
	We will also assume the following convention \cite[Convention 2.7]{BL09}:
	\smallbreak
	\begin{conv}\label{conv:duplicate}
		Every generalized permutation contains duplicate letters in both rows.
	\end{conv}

	\smallbreak
	We now recall the definition of an irreducible generalized permutation in the sense of \cite[Definition 3.1]{BL09}.
	A \emph{decomposition} of a generalized permutation $\pi$ is a way of writing it as
	\[
		\pi = \left(\begin{array}{c|c|c}
			F_{\mathrm{tl}} & *** & F_{\mathrm{tr}} \\\hline
			F_{\mathrm{bl}} & *** & F_{\mathrm{br}}
		\end{array}\right)
	\]
	where $F_{\mathrm{tl}}, F_{\mathrm{tr}}, F_{\mathrm{bl}}, F_{\mathrm{br}}$ are (possibly empty) subsets of $\cA$.
	This notation means that there exist $1 \leq i_1 \leq i_2 \leq \ell < i_3 \leq i_4 \leq \ell + m$ such that
	\begin{itemize}
		\item $F_{\mathrm{tl}} = \{\pi(1), \dotsc, \pi(i_1)\}$;
		\item $F_{\mathrm{tr}} = \{\pi(i_2), \dotsc, \pi(\ell)\}$;
		\item $F_{\mathrm{bl}} = \{\pi(\ell+1), \dotsc, \pi(i_3)\}$;
		\item $F_{\mathrm{br}} = \{\pi(i_4), \dotsc, \pi(\ell+m)\}$.
	\end{itemize}
	We de not assume that $|F_{\mathrm{tl}}| = |F_{\mathrm{bl}}|$ or $|F_{\mathrm{tr}}| = |F_{\mathrm{br}}|$.
	Once a decomposition is clear from context, we refer to $F_{\mathrm{tl}}, F_{\mathrm{tr}}, F_{\mathrm{bl}}, F_{\mathrm{br}}$ as the top-left, top-right, bottom-left and bottom-right corners of $\pi$, respectively.
	
	Let $\pi$ be a strict generalized permutation. We say that $\pi$ is \emph{reducible} if there exists a decomposition
	\[
		\pi = \left(\begin{array}{c|c|c}
			A \cup B & *** & D \cup B \\\hline
			A \cup C & *** & D \cup C 
		\end{array}\right)
	\]
	where $A,B,C,D$ are disjoint (possibly empty) subsets of $\cA$ satisfying one of the following conditions:
	\begin{itemize}
		\item no corner is empty;
		\item there is exactly one empty corner and it is on the left;
		\item there are exactly two empty corners and they are on the same side.
	\end{itemize}
	Otherwise, we say that it is \emph{irreducible}.
	A generalized permutation stems as a measured foliation of a quadratic differential on a Riemann surface or, equivalently, admits a suspension datum if and only if it is irreducible and satisfies \Cref{conv:duplicate} \cite[Theorem 3.2]{BL09}.
	Furthermore, to construct an $\cS$-adic representation of the natural symbolic coding of a linear involution, it is necessary for the later to have the Keane property (see the definition below), which is true for almost all admissible parameters assuming that the generalized permutation is irreducible \cite[Theorem B]{BL09}. 
	From now on, we will always assume that a generalized permutation is irreducible and satisfies \Cref{conv:duplicate}.

	We consider the alphabet $\cA \cup \cA^{-1}$ by adding inverse letters, where $(\alpha^{-1})^{-1} = \alpha$ for every $\alpha \in \cA$.
	In this way, a generalized permutation $\pi$ naturally defines a \emph{labelled} generalized permutation, which is a bijection $\flat(\pi) \colon \{1, \dotsc, 2d\} \to \cA \cup \cA^{-1}$ defined as follows: for each $i \in \{1, \dotsc, 2d\}$, either $\flat(\pi)(i) = \pi(i)$ and $\flat(\pi)(\sigma(i)) = \pi(i)^{-1}$ or $\flat(\pi)(i) = \pi(i)^{-1}$ and $\flat(\pi)(\sigma(i)) = \pi(i)$.
	It is clear that in this way that labelled and unlabelled generalized permutations are in finite-to-one correspondence.
	We also represent labelled generalized permutations as a table with two rows and we say that a letter $\alpha \in \cA \cup \cA^{-1}$ is duplicate if both $\alpha$ and $\alpha^{-1}$ occur in the same row, and simple otherwise.

	\subsection{Linear involutions} Consider two copies $I \times \{0\}$ and $I\times \{1\}$ of an open interval $I$ of the real line, which are the \emph{components} of $\tilde{I} = I \times \{0, 1\}$.
	For an unlabelled generalized permutation $\pi$ of type $(\ell, m)$, we can associate an \emph{admissible length (column) vector} $\lambda \in \RR^{\cA}_+$ such that
	\[
		\lambda_{\pi(1)} + \lambda_{\pi(2)} + \dotsb + \lambda_{\pi(\ell)}
		= \lambda_{\pi(\ell+1)}+ \lambda_{\pi(\ell+2)} + \dotsb + \lambda_{\pi(\ell+m)}.
	\]
	Define $\Delta_\pi$ to be the set of all \emph{normalized} admissible length vectors relative to the generalized permutation $\pi$, that is, the set of all admissible length vectors $\lambda \in \RR^{\cA}_+$ such that $\sum_{\alpha \in \cA} \lambda_\alpha = 1$.
	This set is an hyperplane section of the simplex of probability vectors and carries a natural measure $\Leb_\pi$.
	We consider $\flat(\pi)$ a labelled version of $\pi$ and a partition of $I \times \{0\}$ (minus $\ell-1$ points) into $\ell$ open intervals $(I_{\flat(\pi)(i)} \colon 1 \leq i \leq \ell)$ of respective lengths $\lambda_{\pi(1)}, \lambda_{\pi(2)}, \dotsc, \lambda_{\pi(\ell)}$ and a partition of $I \times \{1\}$ (minus $m-1$ points) into $m$ open intervals $(I_{\flat(\pi)(\ell+i)} \colon 1 \leq i \leq m)$ of respective lengths $\lambda_{\pi(\ell+1)}, \lambda_{\pi(\ell+2)}, \dotsc, \lambda_{\pi(\ell+m)}$.
	Denote by $\Sigma$ the $2d-2$ points separating the intervals $(I_\alpha \colon \alpha \in \cA \cup \cA^{-1})$.
	The \emph{linear involution} on $I$ defined by the data $(\pi, \lambda)$ relative to the alphabet $\cA$ is the map $T$ defined on $\tilde{I} \setminus \Sigma$ given by $T = \sigma_2 \circ \sigma_1$, where $\sigma_1, \sigma_2$ are involutions defined on $\tilde{I} \setminus \Sigma$ as follows:

	\begin{itemize}
		\item if $\alpha \in \cA$ is duplicate, then the restriction of $\sigma_1$ to $I_\alpha$ is a symmetry from $I_\alpha$ to $I_{\alpha^{-1}}$ and if $\alpha$ is simple, then the restriction of $\sigma_1$ to $I_\alpha$ is a translation from $I_\alpha$ to $I_{\alpha^{-1}}$;

		\item the involution $\sigma_2$ exchanges the components of $\tilde{I}$, i.e., sends $(x, \epsilon)$ to $(x, 1-\epsilon)$, for every $x \in \tilde{I} \setminus \Sigma$ and $\epsilon \in \{0,1\}$.
	\end{itemize}

	By a linear involution defined by the data $(\pi, \lambda)$ we mean a linear involution on the interval $I = (0, \sum_{i = 1}^\ell \lambda_{\pi(i)})$ relative to the alphabet $\cA$ which defines the generalized permutation $\pi$.
	Our definition of linear involution coincides with coherent and non orientable linear involutions in the terminology of \cite[Section 3]{BDD+17}: these correspond to non-orientable laminations on orientable surfaces.
	\Cref{fig:LI} illustrates an example of a linear involution.

	\begin{figure}[ht]
	    \centering
	    \begin{minipage}{0.48\textwidth}
	        \centering
	        \includegraphics[width=\textwidth]{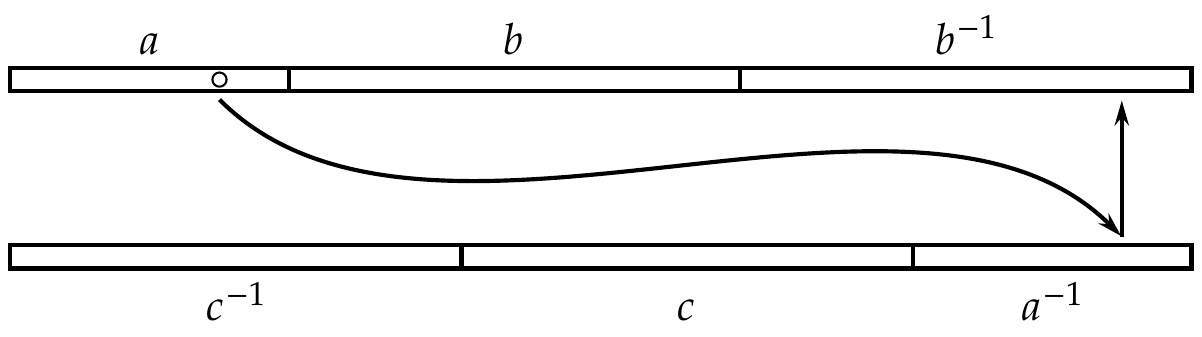}
	    \end{minipage}
	    \hfill
	    \begin{minipage}{0.48\textwidth}
	        \centering
	        \includegraphics[width=\textwidth]{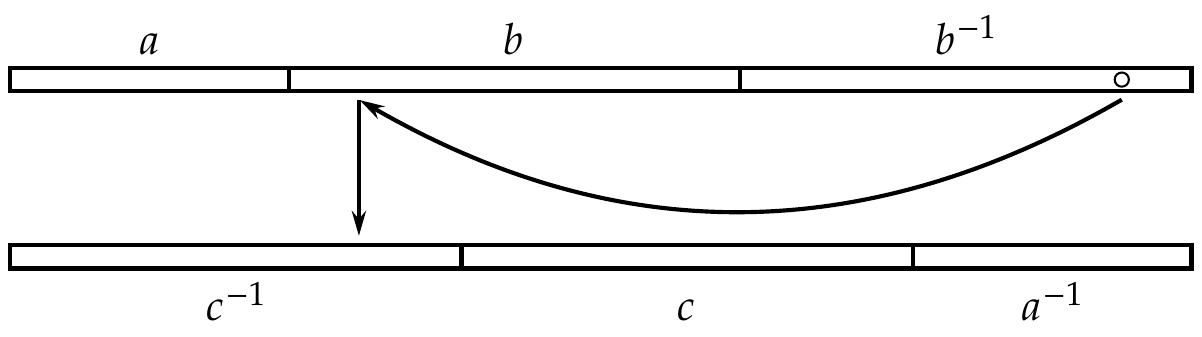}
	    \end{minipage}

	    \caption{The first two iterations of a point under a linear involution on the interval $I = (0,1)$ defined by the data $(\pi, \lambda)$ relative to the alphabet $\cA=\{a, b , c\}$.
	    Here $\pi = \Big( \begin{smallmatrix} a & b & b \\ c & c & a \end{smallmatrix} \Big)$ and $(\lambda_a, \lambda_b, \lambda_c) = \Big( \sqrt{5} - 2, \frac{3 - \sqrt{5}}{2}, \frac{3 - \sqrt{5}}{2} \Big)$.
	    The associated labelled permutation is $\flat(\pi) = \Big( \begin{smallmatrix} a & b & b^{-1} \\ c^{-1} & c & a^{-1} \end{smallmatrix} \Big)$. 
        }
	    \label{fig:LI}
	\end{figure}

	A \emph{connection} of a linear involution $T$ is a triple $(x, y, n)$, where $x$ belongs to $\sigma_2(\Sigma)$, $y$ belongs to $\Sigma$ and $n \ge 0$ is such that $T^n x = y$.
	We say that $T$ has the \emph{Keane property} if $T$ has no connections.
	Denote by $O$ the union of orbits of points in $\Sigma$ and $\tilde{O} = O \cup \sigma_2(O)$.
	Then $T$ is a bijection from $\tilde{I} \setminus \tilde{O}$ onto itself, $\tilde{I} \setminus \tilde{O}$ is dense in $\tilde{I}$ and the orbit of a point of $\tilde{I} \setminus \tilde{O}$ is well defined.
	Moreover, if $T$ has the Keane property, such a non negative orbit is dense in $\tilde{I}$ \cite[Proposition 4.2]{BL09}.
	In addition, if $\cK_\pi$ is the set of $\lambda \in \Delta_\pi$ such that a linear involution defined by the data $(\pi, \lambda)$ has the Keane property, then $\cK_\pi$ is measurable in $\Delta_\pi$ by \cite[Proposition 4.2]{BL09} and from \cite[Theorem B]{BL09} we have that $\Leb_\pi(\Delta_\pi \setminus \cK_\pi) = 0$.

	\subsection{Rauzy--Veech induction} We now define the Rauzy--Veech induction algorithm for unlabelled generalized permutations as defined by Boissy and Lanneau \cite[Section 2.2]{BL09}.
	Analogous definitions can be given for labelled generalized permutations.
    It consists of at most two operations $R_{\Rtop}$ and $R_{\Rbot}$ on an unlabelled generalized permutation $\pi$, which we call \emph{top} and \emph{bottom}.
	
	If $\sigma(\ell) > \ell$, then $R_{\Rtop}(\pi)$ is the type-$(\ell, m)$ generalized permutation defined as:
	\[
		R_{\Rtop}(\pi)(i)
		= \begin{cases}
			\pi(i) & i \leq \sigma(\ell) \\
			\pi(\ell+m) & i = \sigma(\ell) + 1 \\
			\pi(i-1) & \text{otherwise};
		\end{cases}
	\]
	if $\sigma(\ell) < \ell$ and there exists a duplicate letter in the bottom row of $\pi$ which is not the last letter, then $R_{\Rtop}(\pi)$ is the type-$(\ell+1, m-1)$ generalized permutation defined as:
	\[
		R_{\Rtop}(\pi)(i)
		= \begin{cases}
			\pi(i) & i < \sigma(\ell) \\
			\pi(\ell+m) & i = \sigma(\ell) \\
			\pi(i-1) & \text{otherwise};
		\end{cases}
	\]
	and, in any other case, $R_{\Rtop}$ is not defined on $\pi$.
	When a top operation is defined, we call $\pi(\ell)$ the \emph{winner} and $\pi(\ell+m)$ the \emph{loser} of the operation.
	
	Similarly, if $\sigma(\ell+m) < \ell$, then $R_{\Rbot}(\pi)$ is the type-$(\ell, m)$ generalized permutation defined as:
	\[
		R_{\Rbot}(\pi)(i)
		= \begin{cases}
			\pi(\ell) & i = \sigma(\ell+m) + 1 \\
			\pi(i-1) & \sigma(\ell+m) + 1 < i \leq \ell \\
			\pi(i) & \text{otherwise};
		\end{cases}
	\]
	if $\sigma(\ell+m) > \ell$ and there exists a duplicate letter in the top row of $\pi$ which is not the last letter, then $R_{\Rbot}(\pi)$ is the type-$(\ell-1, m+1)$ generalized permutation defined as:
	\[
		R_{\Rbot}(\pi)(i)
		= \begin{cases}
			\pi(i+1) & \ell \leq i < \sigma(\ell+m) + 1 \\
			\pi(\ell) & i = \sigma(\ell+m) - 1 \\
			\pi(i) & \text{otherwise};
		\end{cases}
	\]
	and, in any other case, $R_{\Rbot}$ is not defined on $\pi$.
	When a bottom operation is defined, we call $\pi(\ell+m)$ the \emph{winner} and $\pi(\ell)$ the \emph{loser} of the operation.
	Observe that, if $\pi$ is irreducible, then at least one of these operations is defined on $\pi$.
	Moreover, $R_{\Rtop}(\pi)$ and $R_{\Rbot}(\pi)$ are also irreducible if they are defined.
	We remark that for generalized permutations the winner and loser letters are different.
	For (labelled or unlabelled) generalized permutations, consider the directed graph whose vertices are the irreducible generalized permutations and such that $\pi \to \pi'$ is an edge if $R_{\Rtop}(\pi) = \pi'$ or $R_{\Rbot}(\pi) = \pi'$.
	The strongly connected components of this graph are called \emph{Rauzy classes} and the graph is called \emph{Rauzy diagram}.
	Rauzy classes are in a finite-to-one correspondence with the connected components of the strata of the moduli space of quadratic differentials \cite[Theorem D]{BL09}.

	From \cite[Section 2.2]{BL09} the dynamical interpretation of the Rauzy--Veech induction for linear involutions is given by the following proposition:

	\begin{prop} \label{prop:RV_induction}
		Let $T$ be a linear involution on $I$ defined by the data $(\pi, \lambda)$ relative to the alphabet $\cA$ and let $\flat(\pi)$ be a labelled version of $\pi$.
		\begin{itemize}
			\item If $\lambda_{\pi(\ell)} > \lambda_{\pi(\ell+m)}$ then the first return map $T'$ of $T$ on $(I \setminus I_{\flat(\pi)(\ell+m)}) \times \{0,1\}$ is a linear involution on $I \setminus I_{\flat(\pi)(\ell+m)}$ defined by the data $(\pi', \lambda')$ relative to the alphabet $\cA$, where $\pi' = R_{\Rtop}(\pi)$, $\lambda'_{\pi(\ell)} = \lambda_{\pi(\ell)} - \lambda_{\pi(\ell+m)}$ and $\lambda'_\alpha = \lambda_\alpha$ if $\alpha \neq \pi(\ell)$.
			\item If $\lambda_{\pi(\ell+m)} > \lambda_{\pi(\ell)}$ then the first return map $T'$ of $T$ on $(I \setminus I_{\flat(\pi)(\ell)}) \times \{0,1\}$ is a linear involution on $I \setminus I_{\flat(\pi)(\ell)}$ defined by the data $(\pi', \lambda')$ relative to the alphabet $\cA$, where $\pi' = R_{\Rbot}(\pi)$, $\lambda'_{\pi(\ell+m)} = \lambda_{\pi(\ell+m)} - \lambda_{\pi(\ell)}$ and $\lambda'_\alpha = \lambda_\alpha$ if $\alpha \neq \pi(\ell+m)$.
		\end{itemize}
		In this case we denote $\RV(\pi, \lambda) = (\pi', \lambda')$.
	\end{prop}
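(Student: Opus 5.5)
The plan is to verify the first-return map directly, treating the first bullet in detail; the second follows from the first by exchanging the roles of the two rows, i.e. by conjugating with $\sigma_2$. Assume $\lambda_{\pi(\ell)} > \lambda_{\pi(\ell+m)}$. Write $\alpha = \flat(\pi)(\ell)$ for the rightmost interval of the top component and $\beta = \flat(\pi)(\ell+m)$ for the rightmost interval of the bottom component. Both lie at the right end of $I$, so $I_\beta \subset I_\alpha$ as subsets of $I$, sitting at the right end of $I_\alpha$. Let $I' = I \setminus I_\beta$, an interval of length $\sum_{i\le \ell}\lambda_{\pi(i)} - \lambda_{\pi(\ell+m)}$. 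Put $J = I' \times \{0,1\}$, so $\tilde I \setminus J = I_\beta \times \{0,1\}$.

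The first step is to compute return times. Since $T = \sigma_2\circ\sigma_1$, a point of $J$ in a bottom interval $I_\gamma$, or in a top interval $I_\gamma$ with $\gamma\neq\alpha$, is sent by $\sigma_1$ into $I_{\gamma^{-1}}$ and then by $\sigma_2$ to the other component. The point can leave $J$ only if it lands in $I_\beta\times\{0,1\}$. Now $I_\beta \times\{1\}$ is the bottom interval $\beta$, and $I_\beta\times\{0\}$ is the right end of top interval $\alpha$. So only points with $\sigma_1 x \in I_{\beta}$ (the bottom piece, equivalently $x\in I_{\beta^{-1}}$) or with $\sigma_1 x$ in the right end of $I_\alpha\times\{1\}$ can escape after $\sigma_2$. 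Each escaping point lies over $I_\beta$, and from there one more application of $T$ sends it through $\sigma_1$ on $I_\beta\times\{1\}$ (into $I_{\beta^{-1}}$) or through $\sigma_1$ on the right end of $I_\alpha \times\{0\}$ (into the corresponding part of $I_{\alpha^{-1}}$). One checks that the result projects into $I'$, so it lies in $J$. Hence every return time is $1$ or $2$, and $T'$ is again a composition $\sigma_2\circ\sigma_1'$. Here $\sigma_1'$ agrees with $\sigma_1$ away from the relevant pieces, and on the composite pieces it is $\sigma_1\circ\sigma_2\circ\sigma_1$ restricted appropriately, which is an isometry.

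The second step identifies the combinatorics. The new top component is unchanged except that interval $\alpha$ is shortened by $\lambda_{\pi(\ell+m)}$. The bottom component loses $I_\beta$, and the piece of $I_{\beta^{\pm1}}$ that was routed through $I_\beta$ is now glued as the interval labelled $\beta^{\mp1}$ immediately next to where $I_{\alpha^{-1}}$'s reflected or translated piece sits. The position is $\sigma(\ell)+1$ when $\sigma(\ell)>\ell$, since the partner of $\alpha$ is in the bottom row. It is $\sigma(\ell)$ when $\sigma(\ell)<\ell$, in which case the letter moves to the top row and the type becomes $(\ell+1,m-1)$. This matches the definition of $R_{\Rtop}$ letter by letter, and the lengths give $\lambda'$. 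We must also check that the new piece carries the correct orientation, i.e. that duplicate letters become flips and simple letters translations. This follows from parity: composing two maps, each a flip or a translation, together with $\sigma_2$ yields a flip exactly when the two occurrences lie in the same row of $\pi'$.

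The main obstacle is this bookkeeping: the position and orientation of the inserted interval, and the case $\sigma(\ell)<\ell$. In that case the extra hypothesis, that a duplicate letter exists in the bottom row other than the last one, is exactly what ensures $\pi'$ still satisfies \Cref{conv:duplicate}. Since the statement is essentially \cite[Section 2.2]{BL09}, I would follow their case analysis and cite it for the verification of the corner conventions.
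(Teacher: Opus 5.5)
The paper gives no argument of its own for this proposition; it imports it from \cite[Section 2.2]{BL09}. Your direct first-return computation is a correct, self-contained substitute for that citation, and it makes explicit what the citation leaves implicit.

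\begin{itemize}
\item Escaping points are exactly those $x$ with $Tx$ lying over $I_\beta$, and one further step returns to $J$. This uses $\{\alpha,\alpha^{-1}\}\cap\{\beta,\beta^{-1}\}=\emptyset$, which is automatic, since a common letter would force $\lambda_{\pi(\ell)}=\lambda_{\pi(\ell+m)}$.
\item On the composite pieces the new pairing is $\sigma_1\sigma_2\sigma_1$, which is an involution.
\item Your flip/translation parity rule checks out in all four duplicate/simple cases.
\item The bottom case does reduce to the top one. Indeed $\sigma_2T\sigma_2=\sigma_1\sigma_2$ is the linear involution of the row-swapped permutation, and $J$ is $\sigma_2$-invariant.
\end{itemize}

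Three points need tightening.

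First, ``$\sigma_1x$ in the right end of $I_\alpha\times\{1\}$'' should be the right end of $I_\alpha$ in the top component. Call this end $I_\alpha^{\mathrm{R}}$: it is the piece of length $\lambda_{\pi(\ell+m)}$ lying over $I_\beta$.

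Second, the new occurrence of the loser is not a relocated piece of $I_{\beta^{\pm1}}$. It is $T^{-1}(I_\beta)=\sigma_1(I_\alpha^{\mathrm{R}})$, an end piece of $I_{\alpha^{-1}}$. It is the right end if $\sigma_1$ restricted to $I_\alpha$ is a translation, and the left end if it is a flip. This piece is paired with the untouched $I_{\beta^{-1}}$. That is precisely what produces the insertion positions $\sigma(\ell)+1$ and $\sigma(\ell)$.

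Third, when $\sigma(\ell)<\ell$, the side condition in the definition of $R_{\Rtop}$ (a duplicate letter in the bottom row other than the last one) is not a hypothesis of the proposition. You must derive it, since otherwise $R_{\Rtop}(\pi)$ is undefined and the conclusion $\pi'=R_{\Rtop}(\pi)$ is meaningless. It follows from admissibility. Equality of the two row sums gives
\[
\sum_{\gamma\ \text{dup.\ in top}}\lambda_\gamma=\sum_{\gamma\ \text{dup.\ in bottom}}\lambda_\gamma .
\]
If $\pi(\ell+m)$ were the only bottom duplicate, this would give $\lambda_{\pi(\ell+m)}\ge\lambda_{\pi(\ell)}$, because $\pi(\ell)$ is a top duplicate. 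That contradicts $\lambda_{\pi(\ell)}>\lambda_{\pi(\ell+m)}$.
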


	We remark here that if a linear involution has the Keane property, we can always iterate the Rauzy--Veech induction \cite[Proposition 4.2]{BL09}, i.e., the iterates $\RV^n(\pi, \lambda) = (\pi^n, \lambda^n)$ are well-defined for every $n \ge 0$.
	In such case we denote by $\RV^n(T)$ the linear involution on $I^n$ defined by the data $\RV^n(\pi, \lambda)$ relative to the alphabet $\cA$.

	For an edge $\gamma = \pi \to \pi'$ in the unlabelled Rauzy diagram we define $\flat(\gamma) = \flat(\pi) \to \flat(\pi')$ to be the edge in the labelled Rauzy diagram joining the labelled versions of $\pi$ and $\pi'$ that is obtained by following the same top or bottom operation as in the edge $\gamma$ (here the labels on $\flat(\pi')$ are obtained from the labels on $\flat(\pi)$ following the Rauzy--Veech operation).
	We extend this definition to walks: for a directed walk $\gamma = \gamma_0 \gamma_1 \dotsb \gamma_{n-1}$ in the unlabelled Rauzy diagram we define $\flat(\gamma) = \flat(\gamma_0) \flat(\gamma_1) \dotsb \flat(\gamma_{n-1})$ to be the directed walk in the labelled Rauzy diagram obtained by following the same top or bottom operations for every edge as in the directed walk $\gamma$.
	Thus the Rauzy--Veech induction algorithm of a linear involution that has the Keane property naturally defines an infinite directed walk $\gamma(\pi, \lambda) = \gamma_0(\pi, \lambda) \gamma_1(\pi, \lambda) \dotsb$ (resp. $\flat(\gamma(\pi, \lambda)) = \flat(\gamma_0(\pi, \lambda)) \flat(\gamma_1(\pi, \lambda)) \dotsb$) that starts at the vertex $\pi$ (resp. $\flat(\pi)$) in the unlabelled (resp. labelled) Rauzy diagram.
	In such case we also define $\gamma^n(\pi, \lambda) = \gamma_0(\pi, \lambda) \gamma_1(\pi, \lambda) \dotsb \gamma_{n-1}(\pi, \lambda)$ for each $n \geq 1$.

	\subsection{Symplectic matrices} We denote by $\Id$ the identity matrix and for $\alpha, \beta \in \cA$ by $E_{\alpha \beta}$ the matrix that has only one non-zero coefficient, equal to $1$, at position $(\alpha,\beta)$.
	For an edge $\gamma = \pi \to \pi'$ in the unlabelled Rauzy diagram with winner $\alpha_{\mathrm{w}}$ and loser $\alpha_{\mathrm{l}}$ we define
	\[
		M_{\gamma}
		= \Id + E_{\alpha_{\mathrm{w}} \alpha_{\mathrm{l}}}.
	\]
	We extend this definition to walks: for a directed walk $\gamma = \gamma_0 \gamma_1 \dotsb \gamma_{n-1}$ in the unlabelled Rauzy diagram we define $M_{\gamma} = M_{\gamma_0} M_{\gamma_1} \dotsb M_{\gamma_{n-1}}$.
	We define the \emph{stable space} of a linear involution defined by the data $(\pi, \lambda)$ that has the Keane property as the set of row vectors defined by:
	\[
		\Es(\pi, \lambda)
		= \Big\{v \in \RR^\cA \colon \lim\limits_{n \to +\infty} v M_{\gamma^n(\pi, \lambda)} = 0\Big\}.
	\]
	We will use some definitions of Avila and Resende's work \cite{AR12}, which uses a slightly different formalism for generalized permutations.
	Let $\cA$ be an alphabet with $d$ letters and consider a letter $\ast \notin \cA$.
	For an unlabelled generalized permutation $\pi$ of type $(\ell, m)$ with $\ell + m = 2d$ defined on the alphabet $\cA$ we associate a map $\tau \colon \{1, 2, \dotsc 2d+1\} \to \cA \cup \{\ast\}$ such that $\tau(m+1) = \ast$ that can be written as:
	\[
		\begin{pmatrix}
			\tau(1) & \tau(2) & \dotsc & \tau(2d + 1)
		\end{pmatrix}
		= \begin{pmatrix}
			\pi(\ell + m) & \dotsc & \pi(\ell + 1) & \ast & \pi(1) & \dotsc & \pi(\ell)
		\end{pmatrix}.
	\]
	For a letter $\alpha$ in $\cA$ we define $i_\alpha, j_\alpha \in \{1, 2,\dotsc, 2d\} \setminus \{m+1\}$ by $\tau^{-1}(\alpha) = i_\alpha = j_\alpha$ and $i_\alpha < j_\alpha$.
	Then we define the alternate form $\Omega_\pi$ indexed by $\cA$ as:
	\[	
		(\Omega_\pi)_{\alpha \beta}
		= \begin{cases}
		+2 & \text{$j_\alpha < i_\beta$} \\
		-2 & \text{$j_\beta < i_\alpha$} \\
		+1 & \text{$i_\alpha < i_\beta < j_\alpha < j_\beta$} \\
		-1 & \text{$i_\beta < i_\alpha < j_\beta < j_\alpha$} \\
		0  & \text{otherwise.}
		\end{cases}	
	\]
	Let $\gamma = \pi \to \pi'$ be an edge in the unlabelled Rauzy diagram.
	By means of a standard computation we have
	\[
		M_{\gamma}^\tr \Omega_\pi M_{\gamma}
		= \Omega_{\pi'}.
	\]
	This property extends to walks: if $\gamma$ is a directed walk that starts at the vertex $\pi$ and ends at the vertex $\pi'$ we have $M_{\gamma}^\tr \Omega_\pi M_{\gamma} = \Omega_{\pi'}$.
	Let $H(\pi) = \RR^\cA \Omega_\pi$ be the image of $\Omega_\pi$ (we recall again that it consists of row vectors).
	We define the \emph{symplectic form} $\omega_\pi \colon H(\pi) \times H(\pi) \to \RR$ by
	\[
		\omega_\pi(u \Omega_\pi, v \Omega_\pi)
		= - \langle u, v \Omega_\pi \rangle,
	\]
	where $\langle \cdot, \cdot \rangle$ denotes the standard inner product on $\RR^\cA$ defined for row vectors.
	That is, $\omega_\pi$ is a well-defined, non-degenerate alternate $2$--form which is also \emph{integrable} in the sense that there exists a constant $C(\pi) > 0$ such that
	\[
		|\omega_\pi(u, v)|
		\leq C(\pi) \| u \| \| v \|
	\]
	for all $u$ and $v$ in $H(\pi)$.
	Let $\gamma = \pi \to \pi'$ be an edge in the unlabelled Rauzy diagram.
	Since $M_{\gamma}^\tr \Omega_\pi M_{\gamma} = \Omega_{\pi'}$ it follows that $M_\gamma$ (acting on row vectors) maps $H(\pi)$ onto $H(\pi')$ and that
	\[
		\omega_{\pi'}(u M_\gamma, v M_\gamma)
		= \omega_\pi(u, v)	
	\]
	for each $u$ and $v$ in $H(\pi)$.
	The fact that $\omega_\pi$ is a symplectic form on $H(\pi)$ implies that there exists $g_\pi \geq 1$ such that $\dim H(\pi) = 2g_\pi$.
	Moreover, if $\gamma$ is a directed walk that joins $\pi$ and $\pi'$ in the unlabelled Rauzy diagram, then $M_\gamma$ is an isomorphism between $H(\pi)$ and $H(\pi')$.
	It follows that the quantity $g_\pi$ only depends on the unlabelled Rauzy diagram.

	\subsection{Lyapunov exponents} By a \emph{measure-preserving dynamical system} $(X, \mu, T)$ we mean a space $X$ together with a $\sigma$--algebra, a finite measure $\mu$ defined on such $\sigma$--algebra and $T \colon X \to X$ a measurable and measure-preserving transformation. 
	From here we follow the content and notation of \cite[Appendix D and Lemma 7.14]{BDG+24}.
	Let $\pi$ be a generalized permutation.
	The authors define two measure-preserving dynamical systems (notations are quite technical but are not important for our purpose): $(\partial^{-} P^{(1)}(\pi), \nu_P^{(1)}(\pi), \RV^\partial)$ and $(X_1(\pi), \phi_{X_1(\pi)}, \R^\partial)$, which we now briefly describe.
	In our terminology, the space $X_1(\pi)$ corresponds to $\Delta_\pi$ and the space $\partial P^{(1)}(\pi)$ consists of points $(\lambda, \tau)$ in $X_1(\pi) \times Y(\pi)$ for some cone $Y(\pi)$, which satisfy a certain condition.
	The maps $\RV^\partial$ and $\R^\partial$ correspond to the Rauzy--Veech induction on those spaces.
	In particular, $\R^\partial(\lambda) = \RV(\pi, \lambda)$ for all $\lambda \in X_1(\pi)$.
	The measures $\phi_{X_1(\pi)}$ and $\Leb_\pi$ are absolutely continuous with respect to each other and the projection onto the first coordinate $p \colon \partial^{-} P^{(1)}(\pi) \to X_1(\pi)$ satisfies $\phi_{X_1(\pi)} = p_\ast \nu_P^{(1)}(\pi)$.
	Then, a \emph{discrete symplectic cocycle} $\D^\R$ for $(\partial^{-} P^{(1)}(\pi), \nu_P^{(1)}(\pi), \RV^\partial)$ can be defined following \cite[Section 7.13]{BDG+24}. That is, we define a linear map $\D^\R(t, (\lambda, \tau))$ for all $t \in \ZZ$ and $(\lambda, \tau) \in \partial^{-} P^{(1)}(\pi)$.
	We remark that $\D^\R(t, (\lambda, \tau))$ acts on row vectors (that is, it is the transpose of the cocycle defined in \cite{BDG+24}).
	It turns out that for a fixed $t \in \ZZ$, the value $\D^\R(t, (\lambda, \tau))$ depends only on $\lambda$, and thus $\D^\R$ projects to a discrete symplectic cocycle $\bar{\D}^\R$ for $(X_1(\pi), \phi_{X_1}(\pi), \R^\partial)$ defined by $\bar{\D}^\R(t, \lambda) = \D^\R(t, (\lambda, \tau))$ for every $t \in \ZZ$, $\lambda \in X_1(\pi)$ and all $\tau \in Y(\pi)$.
	Moreover, if $\lambda \in X_1(\pi)$ then $\bar{\D}^\R(1, \lambda) = M_{\gamma^{n(\pi, \lambda)}(\pi, \lambda)}$ for some positive integer $n(\pi, \lambda)$, see \cite[Section 7.13]{BDG+24}.
	In particular, the cocycle preserves the symplectic form $\omega_\pi$.
	The cocycle $\D^\R$ is $\log$--integrable with respect to $\nu_P^{(1)}(\pi)$ \cite[Lemma 7.14]{BDG+24}, i.e., for all $t \in \ZZ$ the maps $(\lambda, \tau) \mapsto \log \| \D^\R(t, (\lambda, \tau)) \|$ and $(\lambda, \tau) \mapsto \log \| \D^\R(t, (\lambda, \tau))^{-1} \|$ are integrable with respect to $\nu_P^{(1)}(\pi)$.

	We now prove that the cocycle $\bar{\D}^\R$ is $\log$--integrable with respect to $\phi_{X_1(\pi)}$.
	First we state an elementary lemma whose proof is left to the reader.

	\begin{lem} \label{lem:pushforward}
	Let $(X, \cX, \mu)$ and $(Y, \cY, \nu)$ be two finite measure spaces and $p \colon X \to Y$ a measurable (with respect to $\cX$ and $\cY$) surjective map such that $\nu = p_\ast \mu$.
	Let $f \colon Y\to \RR$ be a measurable map.
	Then, if $f \circ p \colon X \to \RR$ is integrable with respect to $\mu$, then $f \colon Y \to \RR$ is integrable with respect to $\nu$ and
	\[
	\int_Y f(y) \diff \nu(y)
	= \int_X (f \circ p)(x) \diff \mu(x).
	\]
	\end{lem}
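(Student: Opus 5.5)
The plan is the change of variables formula for pushforward measures, applied first to the positive and negative parts of $f$ and then combined by linearity. The equality $\nu = p_\ast \mu$ means that $\nu(B) = \mu(p^{-1}(B))$ for every $B \in \cY$. Equivalently, $\int_Y \mathbf{1}_B \diff \nu = \int_X \mathbf{1}_B \circ p \diff \mu$, since $\mathbf{1}_B \circ p = \mathbf{1}_{p^{-1}(B)}$. The measurability of $p$ guarantees that $p^{-1}(B) \in \cX$, and that $g \circ p$ is $\cX$-measurable whenever $g$ is $\cY$-measurable. Surjectivity is not needed for the argument.

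\textbf{Step 1.} Extend the identity from indicators to nonnegative simple functions on $Y$ by linearity of the integral. Then extend it to arbitrary nonnegative measurable $g \colon Y \to [0, +\infty]$. To do this, choose simple functions $0 \leq g_k \uparrow g$; then $g_k \circ p \uparrow g \circ p$, and the monotone convergence theorem on both sides gives
\[
	\int_Y g \diff \nu = \int_X (g \circ p) \diff \mu .
\]

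\textbf{Step 2.} Apply this to $g = f^+$ and $g = f^-$. Since $(f \circ p)^{\pm} = f^{\pm} \circ p$ and $f \circ p$ is $\mu$-integrable, both $\int_X f^{\pm} \circ p \diff \mu$ are finite. By Step 1, $\int_Y f^{\pm} \diff \nu$ are finite as well, so $f$ is $\nu$-integrable. Subtracting the two identities yields the stated equality of integrals.

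\textbf{Main obstacle.} There is no real obstacle here. The only care needed is to apply the monotone convergence argument to nonnegative functions before concluding integrability, so that no finiteness is assumed prematurely.
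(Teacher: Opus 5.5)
Your proposal is correct: the paper leaves this lemma to the reader, and your argument (indicators, then simple functions, then monotone convergence for nonnegative $g$, then splitting $f = f^+ - f^-$ using $(f \circ p)^{\pm} = f^{\pm} \circ p$) is exactly the standard change-of-variables proof they have in mind. Your remarks that surjectivity of $p$ is superfluous, and that integrability of $f$ must be deduced from the nonnegative case before subtracting, are also accurate.
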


	\begin{cor} \label{cor:log-int}
		The discrete symplectic cocycle $\bar{\D}^\R$ is $\log$--integrable with respect to $\phi_{X_1(\pi)}$.
	\end{cor}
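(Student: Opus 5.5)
The plan is to apply \Cref{lem:pushforward} with $X = \partial^{-} P^{(1)}(\pi)$, $\mu = \nu_P^{(1)}(\pi)$, $Y = X_1(\pi)$, $\nu = \phi_{X_1(\pi)}$ and $p$ the projection onto the first coordinate. The paper recalls that $\phi_{X_1(\pi)} = p_\ast \nu_P^{(1)}(\pi)$. Both are finite measures: $\nu_P^{(1)}(\pi)$ is finite by its construction in \cite{BDG+24}, and $\phi_{X_1(\pi)}$ is its pushforward. The projection $p$ is measurable. We may assume it is surjective, by restricting $X_1(\pi)$ to the image of $p$. This image has full $\phi_{X_1(\pi)}$--measure because $\phi_{X_1(\pi)} = p_\ast \nu_P^{(1)}(\pi)$, so the restriction does not affect integrability. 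Alternatively, surjectivity is built into the construction, since each $\lambda$ admits some $\tau \in Y(\pi)$ with $(\lambda,\tau)$ in the domain.

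Fix $t \in \ZZ$ and set $f_t(\lambda) = \log \|\bar{\D}^\R(t, \lambda)\|$ and $g_t(\lambda) = \log \|\bar{\D}^\R(t,\lambda)^{-1}\|$ on $X_1(\pi)$. By the definition of the projected cocycle, $\bar{\D}^\R(t, \lambda) = \D^\R(t, (\lambda, \tau))$ for every $\tau$. Hence
\[
(f_t \circ p)(\lambda,\tau) = \log \|\D^\R(t,(\lambda,\tau))\|, \qquad (g_t \circ p)(\lambda,\tau) = \log \|\D^\R(t,(\lambda,\tau))^{-1}\|.
\]
By \cite[Lemma 7.14]{BDG+24}, both of these functions are integrable with respect to $\nu_P^{(1)}(\pi)$. \Cref{lem:pushforward} then yields that $f_t$ and $g_t$ are integrable with respect to $\phi_{X_1(\pi)}$. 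This is exactly $\log$--integrability of $\bar{\D}^\R$.

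The one point needing care is measurability of $f_t$ and $g_t$, which \Cref{lem:pushforward} requires as a hypothesis. I would obtain it from the description $\bar{\D}^\R(1,\lambda) = M_{\gamma^{n(\pi,\lambda)}(\pi,\lambda)}$. The map $\lambda \mapsto \gamma^{n}(\pi,\lambda)$ is constant on the measurable cylinder sets of $\Delta_\pi$ defined by the Rauzy--Veech walk. The return time $n(\pi,\lambda)$ is measurable as well. Therefore $\lambda \mapsto \bar{\D}^\R(1,\lambda)$ is a countably-valued measurable map. The cocycle relation for general $t$, which involves compositions along the measurable map $\R^\partial$ and inverses, preserves measurability. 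This measurability step is the only potential obstacle; the rest is a direct application of \Cref{lem:pushforward}.
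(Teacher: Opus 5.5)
Your proposal is correct and follows the paper's proof. Like the paper, you fix $t \in \ZZ$ and apply \Cref{lem:pushforward} to the projection $p$ with $f_\pm(\lambda)=\log\|\bar{\D}^\R(t,\lambda)^{\pm 1}\|$, use $\bar{\D}^\R(t,p(\lambda,\tau))=\D^\R(t,(\lambda,\tau))$, and conclude from \cite[Lemma 7.14]{BDG+24}; your remarks on measurability of $\lambda\mapsto\bar{\D}^\R(t,\lambda)$ and on surjectivity of $p$ only make explicit hypotheses the paper leaves implicit, and surjectivity is not actually needed for the change-of-variables formula.
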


	\begin{proof}
		Let $t \in \ZZ$.
		We use \Cref{lem:pushforward} with $(\partial^{-} P^{(1)}(\pi), \nu_P^{(1)}(\pi), \RV^\partial)$, $(X_1(\pi), \phi_{X_1(\pi)}, \R^\partial)$, $p$ the projection onto the first coordinate and $f_\pm(\lambda) = \log \| \bar{\D}^\R(t, \lambda)^{\pm 1} \|$ to obtain
		\begin{align*}
			\int_{X_1(\pi)} \log \| \bar{\D}^\R(t, \lambda)^{\pm 1} \| \diff \phi_{X_1(\pi)}(\lambda)
			&= \int_{\partial^{-} P^{(1)}(\pi)} \log \| \bar{\D}^\R(t, p(\lambda, \tau))^{\pm 1} \| \diff \nu_P^{(1)}(\pi)(\lambda, \tau) \\
			&= \int_{\partial^{-} P^{(1)}(\pi)} \log \| \D^\R(t, (\lambda, \tau))^{\pm 1} \| \diff \nu_P^{(1)}(\pi)(\lambda, \tau) \\
			&< +\infty.
		\end{align*}
	\end{proof}

	From \Cref{cor:log-int}, the Oseledets' theorem \cites{Ose68}[Chapter 4]{BP13} implies that there exist $|\cA|$ Lyapunov exponents $\theta_1 \geq \theta_2 \geq \dotsb \geq \theta_{|\cA|}$ of the cocycle $\bar{\D}^\R(1, \cdot)$ with respect to the measure $\phi_{X_1(\pi)}$.
	Moreover, by symplecticity \cite{Via07} there exists $I \subseteq \{1, 2, \dotsc |\cA|\}$ with $|I| = g_\pi$ such that $\theta_i$ and $- \theta_i$ are Lyapunov exponents for each $i \in I$.
	Observe that $g_\pi$ coincides with the quantity defined in the previous subsection.   
    From this we deduce the following:

	\begin{lem} \label{lem:lyapunov}
		We have $|\{1 \leq i \leq \cA \colon \theta_i < 0 \}| \leq |\cA| - g_\pi$.
	\end{lem}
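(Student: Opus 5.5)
The plan is to read the bound off the symmetry of the Lyapunov spectrum given by symplecticity. By the discussion preceding the statement, there is $I \subseteq \{1, \dotsc, |\cA|\}$ with $|I| = g_\pi$ such that for each $i \in I$ both $\theta_i$ and $-\theta_i$ occur among the Lyapunov exponents. I will use this to produce $g_\pi$ exponents (counted with multiplicity) that are nonnegative; the remaining $|\cA| - g_\pi$ indices then bound the number of negative exponents.

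First I make the symmetry statement precise. It should be read as follows: the spectrum of $\bar{\D}^\R(1,\cdot)$ restricted to the invariant $2g_\pi$-dimensional space $H(\pi)$ is symmetric with multiplicities. This holds because the cocycle maps $H(\pi)$ onto $H(\pi)$ and preserves the non-degenerate form $\omega_\pi$, so the exponents on $H(\pi)$ come in pairs $\pm\theta$ \cite{Via07}. Among these $2g_\pi$ exponents, listed with multiplicity, at least $g_\pi$ are $\geq 0$. Indeed, pair each exponent $\theta$ with its partner $-\theta$; one member of each pair is nonnegative. Pairs with $\theta = 0$ are harmless, since both members are nonnegative.

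Next I pass from $H(\pi)$ to the whole of $\RR^\cA$. The Oseledets spectrum of the full cocycle, with multiplicities, is the union of the spectrum on the invariant subspace $H(\pi)$ and the spectrum of the induced cocycle on the quotient $\RR^\cA / H(\pi)$. This is a standard fact for block-triangular cocycles with a measurable invariant subspace, and log-integrability on both pieces follows from \Cref{cor:log-int}. So at least $g_\pi$ of the $|\cA|$ indices carry an exponent $\geq 0$. Equivalently, the indices $i \in I$ can be chosen with $\theta_i \geq 0$, after replacing $\theta_i$ by its partner if necessary. Hence at most $|\cA| - g_\pi$ indices satisfy $\theta_i < 0$.

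The main obstacle is bookkeeping with multiplicities: I must ensure the $g_\pi$ nonnegative exponents correspond to distinct indices. The block decomposition handles this, since it gives the full spectrum as a multiset. If one prefers to take the statement about $I$ at face value, choose for each $i \in I$ the nonnegative member of $\{\theta_i, -\theta_i\}$. The symmetric-spectrum statement on $H(\pi)$ guarantees these occupy $g_\pi$ distinct slots, and that yields the count directly.
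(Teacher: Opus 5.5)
Your proposal is correct and follows the paper's route. The paper derives the bound directly from the $\pm\theta$ pairing of Lyapunov exponents that preservation of $\omega_\pi$ forces on the $2g_\pi$-dimensional invariant space $H(\pi)$; your careful counting with multiplicities and your passage from $H(\pi)$ to $\RR^\cA$ (spectrum on $H(\pi)$ together with spectrum on the quotient) spell out what the paper leaves as an immediate deduction, and they are exactly what is needed to read its statement about $I$ correctly.
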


	For $\lambda \in X_1(\pi)$ the \emph{stable space} of the cocycle $\bar{\D}^\R$ can be defined as
	\[
		\Es_{\bar{\D}^\R}(\pi, \lambda)
		= \Big\{v \in \RR^\cA \colon \lim\limits_{n \to +\infty} v \bar{\D}^\R(n, \lambda) = 0\Big\}.
	\]
	Observe that $\Es(\pi, \lambda)$ defined above is a subspace of $\Es_{\bar{\D}^\R}(\pi, \lambda)$.
	Recall that $\Leb_\pi$ is absolutely continuous with respect to $\phi_{X_1(\pi)}$.
	We will use the following well-known fact (see for example \cite[Section 2.3]{AF07}):

	\begin{lem} \label{lem:contracting}
		For $\Leb_\pi$--almost all $\lambda$ we have that
		\[
			\dim \Es_{\bar{\D}^\R}(\pi, \lambda)
			= |\{1 \leq i \leq \cA \colon \theta_i < 0 \}|.
		\]
	\end{lem}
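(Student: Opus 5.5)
This is the standard consequence of Oseledets' theorem for a log-integrable cocycle over an ergodic (or at least measure-preserving) base, transferred from $\phi_{X_1(\pi)}$ to $\Leb_\pi$ by absolute continuity. The plan has three steps.

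\textbf{Step 1: the Oseledets filtration.} By \Cref{cor:log-int}, the cocycle $\bar{\D}^\R(1,\cdot)$ over $(X_1(\pi), \phi_{X_1(\pi)}, \R^\partial)$ satisfies the integrability hypothesis of Oseledets' theorem \cites{Ose68}[Chapter 4]{BP13}. I would use the one-sided (forward) version, acting on row vectors. It gives, for $\phi_{X_1(\pi)}$-almost every $\lambda$, a filtration
\[
\RR^\cA = V_1(\lambda) \supsetneq V_2(\lambda) \supsetneq \dotsb \supsetneq V_k(\lambda) \supsetneq \{0\}
\]
by subspaces. Each $v \in V_j(\lambda)\setminus V_{j+1}(\lambda)$ satisfies
\[
\lim_{n\to\infty} \tfrac1n \log\|v\,\bar{\D}^\R(n,\lambda)\| = \chi_j,
\]
where the $\chi_j$ are the distinct exponents. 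Moreover $\dim V_j(\lambda)-\dim V_{j+1}(\lambda)$ equals the multiplicity of $\chi_j$ among $\theta_1,\dotsc,\theta_{|\cA|}$. The exponents and multiplicities are a.e.\ constant because they are invariant functions; if ergodicity of $\phi_{X_1(\pi)}$ is not available, I would take the ergodic decomposition, or simply define the $\theta_i$ pointwise.

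\textbf{Step 2: identify the stable space.} Let $V^-(\lambda)$ be the filtration member corresponding to the largest negative exponent. Then $\dim V^-(\lambda) = |\{i : \theta_i<0\}|$, and I claim $V^-(\lambda) = \Es_{\bar{\D}^\R}(\pi,\lambda)$.
\begin{itemize}
\item If $v \in V^-(\lambda)$ and $v\neq 0$, then $\|v\bar{\D}^\R(n,\lambda)\| \le e^{n(\chi/2)}$ for large $n$, where $\chi<0$. So $v\,\bar{\D}^\R(n,\lambda)\to 0$ and $v\in\Es_{\bar{\D}^\R}(\pi,\lambda)$.
\item Conversely, if $v\notin V^-(\lambda)$, then $v$ has exponent $\chi\ge 0$. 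A zero exponent does not by itself prevent $v\,\bar{\D}^\R(n,\lambda)\to0$ (subexponential decay), and this is the \textbf{main obstacle}.
\end{itemize}

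\textbf{Step 3: rule out subexponential decay.} I would first try to show that no exponent is zero: the $g_\pi$ positive exponents and their negatives on $H(\pi)$ should account for everything, with the remaining directions transverse to $H(\pi)$ carrying zero exponents. Failing that, I would use the two-sided Oseledets splitting $\RR^\cA = E^-\oplus E^0\oplus E^+$ on the natural extension $(\partial^- P^{(1)}(\pi),\nu_P^{(1)}(\pi),\RV^\partial)$. By \cite[Section 2.3]{AF07}, vectors with a nonzero component in $E^0\oplus E^+$ do not tend to zero for typical points. The cited source is where the paper says this fact is well known, so I would rely on it exactly as the paper does. Once the equality holds $\nu_P^{(1)}$-a.e., it descends to $\phi_{X_1(\pi)}$-a.e.\ $\lambda$ via $p_\ast\nu_P^{(1)}(\pi)=\phi_{X_1(\pi)}$, using that the cocycle depends only on $\lambda$.

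\textbf{Transfer to Lebesgue measure.} The full-measure set obtained in Steps 1--3 is $\phi_{X_1(\pi)}$-conull. Since $\Leb_\pi$ and $\phi_{X_1(\pi)}$ are mutually absolutely continuous, it is also $\Leb_\pi$-conull, which gives the statement.
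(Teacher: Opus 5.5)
Your proposal takes the same route as the paper, which gives no argument for this lemma beyond calling it well known and citing \cite[Section 2.3]{AF07}: your Steps 1--2 correctly give the easy inclusion $V^-(\lambda)\subseteq\Es_{\bar{\D}^\R}(\pi,\lambda)$, your measure bookkeeping (from $\nu_P^{(1)}(\pi)$ to $\phi_{X_1(\pi)}$ and then to $\Leb_\pi$) is fine, and the substantive reverse inclusion (no subexponential decay along vectors of non-negative exponent) is, exactly as in the paper, taken from that citation rather than proved. Discard the first idea of Step 3, though, because it cannot work: since $c_\pi M_\gamma = c_{\pi'}$ and each $c_{\pi'}$ is a nonzero vector with entries in $\{0,\pm 1\}$, the vector $c_\pi$ always has Lyapunov exponent $0$, so zero exponents genuinely occur and the reverse inclusion needs input specific to this cocycle, not just the Oseledets splitting.
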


	\section{Natural symbolic codings of linear involutions and its \SS-adic representations} \label{sec:natural_codings}

	\subsection{Natural symbolic coding of a linear involution}
	Let $T$ be a linear involution on $I$ defined by $(\pi, \lambda)$ relative to the alphabet $\cA$ that has the Keane property.
	If $x \in \tilde{I} \setminus \tilde{O}$, we define the symbolic sequence $\omega = (\omega_n \colon n \in \ZZ)$ in $(\cA \cup \cA^{-1})^{\ZZ}$ by $\omega_n = \alpha$ if $T^n(x) \in I_\alpha$ for all $n \in \ZZ$.
	Let $\Omega_T \subseteq (\cA \cup \cA^{-1})^\ZZ$ be the closure of the set of symbolic sequences constructed in this way for every $x \in \tilde{I} \setminus \tilde{O}$.
	Clearly the symbolic sequence corresponding to $T^n(x)$ is $S^n(\omega)$, where $S \colon (\cA \cup \cA^{-1})^\ZZ \to (\cA \cup \cA^{-1})^\ZZ$ is the left shift map given by $S(\omega) = (\omega_{n+1} \colon n \in \ZZ)$.
	As $T$ has the Keane property, $(\Omega_T, S)$ is a minimal aperiodic subshift \cite[Proposition 5.6]{BDD+17}.
	We call $(\Omega_T, S)$ the \emph{natural symbolic coding} of the linear involution $T$.
	We remark here that the language of $\Omega_T$ is \emph{reduced}, that is, for every $\alpha \in \cA \cup \cA^{-1}$ the word $\alpha \alpha^{-1}$ does not belong to the language, see \cite[Section 5.1]{BDD+17}.

	\subsection{\SS-adic subshifts} We define $\cS$-adic subshifts given by directive sequences on the alphabet $\cA \cup \cA^{-1}$ and give some of the properties we will require here.
	In more general settings, directive sequences of morphisms are defined on sequences of alphabets, but a single alphabet is enough for our purpose.	
	For more complete references about $\cS$-adic subshifts you can see \cites{BD14}[Chapter 6.4]{DP22}{BSTY19, DDMP21}.

    Let $F_\cA$ be the free group on the alphabet $\cA$.
	For a word $w$ in $F_\cA$ we denote by $|w|$ its length.
	By a \emph{directive sequence} on the alphabet $\cA \cup \cA^{-1}$ we mean a sequence of morphisms $\btau = (\tau_n \colon n \geq 0)$ with $\tau_n \colon F_\cA \to F_\cA$ for each $n \geq 0$.
	A directive sequence defines a sequence of subshifts $(X_{\btau}^{(n)}, S)$ for all $n \geq 0$, where a bi-infinite sequence $(x_n \colon n \geq 0)$ belongs to $X_{\btau}^{(n)}$ if and only if for every $k, l \in \ZZ$ with $k \leq l$ the word $x_k x_{k+1} \dotsb x_l$ occurs in the word $\tau_n \circ \tau_{n + 1} \circ \dotsb \circ \tau_{N - 1}(\alpha)$ for some $\alpha \in \cA \cup \cA^{-1}$ and $N > n$.
	We define $X_{\btau} = X_{\btau}^{(0)}$.
	The \emph{incidence matrix} of a morphism $\tau \colon F_\cA \to F_\cA$ is the matrix whose coefficient at position $(\alpha,\beta)$ is the number of occurrences of the letter $\alpha$ in the word $\tau(\beta)$.
	The directive sequence $\btau$ is \emph{unimodular} if $\det(M_{\tau_n}) = \pm 1$ for all $n \geq 0$.
	The directive sequence $\btau$ is \emph{everywhere growing} if $|\tau_0 \circ \tau_1 \circ \dotsb \circ \tau_{n-1}(\alpha)|$ tends to infinity for every $\alpha \in \cA \cup \cA^{-1}$ and \emph{primitive} if for every $n \geq 0$ there exists $m > n$ such that for every $\alpha,\beta \in \cA \cup \cA^{-1}$ the letter $\beta$ occurs in $\tau_n \circ \tau_{n+1} \circ \dotsb \circ \tau_{m-1}(\alpha)$.
	Observe that if $\btau$ is primitive, then it is everywhere growing. 
	The directive sequence $\btau$ is \emph{recognizable} \cite{BSTY19} if for all $n \geq 0$ and any $y \in X_{\btau}$ there exists a unique couple $(k, x)$ with $x \in X_{\btau}^{(n)}$ and $0 \leq k < |\tau_0 \circ \tau_1 \circ \dotsb \circ \tau_{n-1}(x_0)|$ such that $y = S^k \tau_0 \circ \tau_1 \circ \dotsb \circ \tau_{n-1}(x)$. 
	From \cite[Lemma 3.2]{DDMP21} we have that $\btau$ is recognizable if and only if for every $n \geq 0$ there exists a positive integer $r_n$ such that if $x  = S^k \tau_0 \circ \tau_1 \circ \dotsb \circ \tau_{n-1}(y)$, $0 \leq k < |\tau_0 \circ \tau_1 \circ \dotsb \circ \tau_{n-1}(y_0)|$, $x' = S^{k'} \tau_0 \circ \tau_1 \circ \dotsb \circ \tau_{n-1}(y')$, $0 \leq k' < |\tau_0 \circ \tau_1 \circ \dotsb \circ \tau_{n-1}(y'_0)|$ and $x_{-r_n} x_{-r_n + 1} \dotsb x_{r_n} = x'_{-r_n} x'_{-r_n + 1} \dotsb x'_{r_n}$, then $k = k'$ and $y_0 = y_0'$.
	We call $r_n$ the \emph{recognizability constant} of the subshift $(X_{\btau}^{(n)}, S)$ for the morphism $\tau_0 \circ \tau_1 \circ \dotsb \circ \tau_{n-1}$.
	The following lemma is well-known, but we include a proof for completeness:

	\begin{lem} \label{lem:everywhere_growing}
		Let $\btau = (\tau_n \colon n \geq 0)$ be a directive sequence on the alphabet $\cA \cup \cA^{-1}$.
		Suppose that $\btau$ is recognizable and everywhere growing.
		Then $(X_{\btau}, S)$ is minimal if and only if $\btau$ is primitive.
	\end{lem}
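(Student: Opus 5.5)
We prove the two implications separately, working throughout with the composed morphisms $\tau_{[n,m)} = \tau_n \circ \tau_{n+1} \circ \dotsb \circ \tau_{m-1}$.

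\emph{Primitive implies minimal.} Take $x \in X_{\btau}$ and a word $w$ in the language of $X_{\btau}$; we must show that $w$ occurs in $x$ with bounded gaps. By definition, $w$ occurs in $\tau_{[0,N)}(\alpha)$ for some letter $\alpha$ and some $N$. Primitivity applied at level $N$ gives $m > N$ such that every letter appears in $\tau_{[N,m)}(\beta)$ for every $\beta$. Hence $\tau_{[0,m)}(\beta)$ contains $w$ for every letter $\beta$. By recognizability, $x = S^k \tau_{[0,m)}(y)$ for some $y \in X_{\btau}^{(m)}$. So $x$ is a concatenation of the blocks $\tau_{[0,m)}(y_i)$, and each block contains $w$. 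Consecutive occurrences of $w$ are therefore separated by at most $2\max_\beta |\tau_{[0,m)}(\beta)|$. This uniform recurrence of every point gives minimality. This direction uses recognizability only to obtain the desubstitution of $x$.

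\emph{Minimal implies primitive.} Suppose $\btau$ is not primitive. Then there exists $n$ such that for every $m > n$ some pair $(\alpha_m, \beta_m)$ has $\beta_m$ absent from $\tau_{[n,m)}(\alpha_m)$. Since the alphabet is finite, we may pass to a subsequence of $m$ along which the pair is a constant $(\alpha, \beta)$. Because $\tau_{[n,m')}(\alpha) = \tau_{[n,m)}(\tau_{[m,m')}(\alpha))$ contains $\tau_{[n,m)}(\gamma)$ for each letter $\gamma$ of $\tau_{[m,m')}(\alpha)$, the property ``$\beta$ absent'' persists, so $\beta$ does not occur in $\tau_{[n,m)}(\alpha)$ for all large $m$. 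Everywhere growth (which passes to the tail level $n$ for the lengths at that level, and at least holds for the images under $\tau_{[0,n)}$) together with compactness lets us build a point $z \in X_{\btau}^{(n)}$ whose finite windows come from the long words $\tau_{[n,m)}(\alpha)$; such $z$ contains no $\beta$. Then $x = \tau_{[0,n)}(z) \in X_{\btau}$, after shifting if needed.

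Minimality forces every point of $X_{\btau}$ to contain every word of the language, in particular $\tau_{[0,n)}(\beta)$ whenever $\beta$ occurs in some $X_{\btau}^{(n)}$ point. Applying recognizability with constant $r_n$ to a long window of $x$ containing a copy of $\tau_{[0,n)}(\beta)$ that is aligned as in some point having $\beta$ at level $n$, the uniqueness of the desubstitution pairs $(k, x_0)$ forces $z$ to contain $\beta$, which is a contradiction. The subtle point is that an occurrence of the word $\tau_{[0,n)}(\beta)$ inside $x$ need not be cut at a level-$n$ boundary. To handle this, we use minimality to transport a whole centered window of radius $r_n$ from a point $x'$ with $x'$ desubstituting to $\beta$ at the origin; such a point exists because $\beta$ occurs in the language of $X_{\btau}^{(n)}$. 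Then recognizability pins the cut.

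I expect this last alignment step, together with checking that $\beta$ genuinely appears in $X_{\btau}^{(n)}$ (this needs everywhere growth and $\beta$ appearing in some long image), to be the main technical point.
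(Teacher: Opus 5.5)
Your argument uses the same mechanism as the paper's. For primitive $\Rightarrow$ minimal, the paper only cites \cite[Proposition 6.4.5]{DP22}; your bounded-gaps argument is the standard proof of that fact.

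For minimal $\Rightarrow$ primitive, write $\tau_{[n,m)}=\tau_n\circ\dotsb\circ\tau_{m-1}$ as you do. The paper argues directly:
\begin{itemize}
\item minimality gives $k$ such that every word of the language of length $\geq k$ contains all words of length $2r_n+1$;
\item everywhere growth gives $m$ with every $\tau_{[0,m)}(\alpha)$ that long;
\item recognizability with constant $r_n$ turns each window into an occurrence of the corresponding level-$n$ letter in $\tau_{[n,m)}(\alpha)$.
\end{itemize}
You use the same recognizability step, but by contradiction against a $\beta$-free point $z\in X_{\btau}^{(n)}$ obtained by compactness. The lengths do diverge, since $|\tau_{[0,m)}(\alpha)|\leq \max_\gamma|\tau_{[0,n)}(\gamma)|\cdot|\tau_{[n,m)}(\alpha)|$. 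This packaging costs a compactness step but gains something. Your $z$ lies in $X_{\btau}^{(n)}$ by the very definition of that subshift, so you never need $\alpha$ to occur in a point of $X_{\btau}^{(m)}$. The paper's direct argument tacitly needs this, in order to see $\tau_{[0,m)}(\alpha)$, cut at level $n$, inside a point of $X_{\btau}$.

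\textbf{Two corrections.}

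\emph{The ``persistence'' sentence is backwards.} From $\beta\notin\tau_{[n,m')}(\alpha)$ you only learn that $\beta$ is absent from $\tau_{[n,m)}(\gamma)$ for the letters $\gamma$ of $\tau_{[m,m')}(\alpha)$, not for $\gamma=\alpha$. You do not need it: the subsequence $(m_j)$ is all the compactness step uses, so delete that sentence.

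\emph{The deferred step is a missing hypothesis, not a technical point.} You postpone showing that $\beta$ occurs in some point of $X_{\btau}^{(n)}$. This cannot be derived from the stated hypotheses, because without it the lemma is false. Start from any primitive, recognizable, everywhere growing $\btau$, add a new letter $c$ to $\cA$, and set $\tau_n(c)=\tau_n(a)$ for a fixed $a\in\cA$ and all $n$. Since $c^{\pm1}$ occurs in no image, $X_{\btau}$ and every $X_{\btau}^{(n)}$ are unchanged. Hence minimality, recognizability and everywhere growth survive, yet the new sequence is not primitive.

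So one must assume that every letter of $\cA\cup\cA^{-1}$ occurs in $X_{\btau}^{(n)}$ for every $n$. The paper's proof also uses this tacitly: it needs a radius-$r_n$ window around a level-$n$ occurrence of $\beta$. The assumption does hold where the lemma is applied. Indeed $X_{\btau_T}^{(n)}=\Omega_{\RV^n(T)}$ by \Cref{thm:RV_adic} applied to $\RV^n(T)$, and every interval $I^n_\alpha$ is non-empty. With that hypothesis added, your proof is complete.
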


	\begin{proof}
		It is a classical fact that $(X_{\btau}, S)$ is minimal whenever $\btau$ is primitive \cite[Proposition 6.4.5]{DP22}.
		Now assume that $X_{\btau}$ is minimal and fix $n \geq 0$.
		Let $r_n$ be the recognizability constant of the subshift $(X_{\btau}^{(n)}, S)$ for the morphism $\tau_0 \circ \tau_1 \circ \dotsb \circ \tau_{n-1}$.
		By minimality, there exists $k \geq 0$ such that every word in the language of $X_{\btau}$ of length at least $k$ contains an occurrence of every word in the language of $X_{\btau}$ of length $2r_n + 1$.
		Since $\btau$ is everywhere growing, we can find $m > n$ satisfying $|\tau_n \circ \tau_{n+1} \circ \dotsb \circ \tau_{m-1}(\alpha)| \geq k$ for all $\alpha \in \cA \cup \cA^{-1}$.
		Hence for every $\alpha \in \cA \cup \cA^{-1}$ the word $\tau_0 \circ \tau_1 \circ \dotsb \circ \tau_{m-1}(\alpha)$ has occurrences of every word in the language of $X_{\btau}$ of length $2r_n + 1$.
		Since $r_n$ is the recognizability constant of $(X_{\btau}^{(n)}, S)$ for the morphism $\tau_0 \circ \tau_1 \circ \dotsb \circ \tau_{n-1}$, for every $\alpha,\beta \in \cA \cup \cA^{-1}$ the word $\tau_n \circ \tau_{n+1} \circ \dotsb \circ \tau_{m-1}(\alpha)$ has an occurrence of $\beta$.
		It follows that $\btau$ is primitive.
	\end{proof}

	\subsection{\SS-adic representations of the natural symbolic codings of linear involutions} Our main result in this section is the following description:
	\begin{thm} \label{thm:RV_adic}
		Let $T$ be a linear involution on $I$ defined by the data $(\pi, \lambda)$ relative to the alphabet $\cA$ that has the Keane property.
		Let $(\alpha_{\mathrm{w_n}} \colon n \ge 0)$ and $(\alpha_{\mathrm{l_n}} \colon n \ge 0)$ be the sequences in $\cA \cup \cA^{-1}$ consisting of the winners and losers of the iterates of the Rauzy--Veech induction algorithm, respectively.
		Then $\Omega_T = X_{\btau_T}$, where $\btau_T = (\tau_n \colon n \geq 0)$ is defined by
		\[
			\tau_n(\alpha)
			= \begin{cases}
				\alpha_{\mathrm{w_n}}^{-1} \alpha_{\mathrm{l_n}} & \text{if $\alpha = \alpha_{\mathrm{l_n}}$} \\
				\alpha_{\mathrm{l_n}}^{-1} \alpha_{\mathrm{w_n}}  & \text{if $\alpha = \alpha_{\mathrm{l_n}}^{-1}$} \\
				\alpha & \text{otherwise.} \\
			\end{cases}
		\]
	\end{thm}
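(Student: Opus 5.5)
We prove \Cref{thm:RV_adic} by comparing the codings of $T$ and of its Rauzy--Veech induced maps. Write $T_n = \RV^n(T)$, defined on $I^n \times \{0,1\}$, and let $\Omega_{T_n}$ be its natural symbolic coding over the same alphabet $\cA \cup \cA^{-1}$. The labels of the intervals of $T_n$ are inherited from those of $T$ through the labelled Rauzy diagram.

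\textbf{Step 1: the one-step identity.} We will show that
\[
\Omega_{T_n} = \overline{\bigcup_{k} S^k \tau_n(\Omega_{T_{n+1}})}.
\]
Take the case of a top operation; the bottom case is symmetric. By \Cref{prop:RV_induction}, $T_{n+1}$ is the first return map of $T_n$ to the complement of $I_{\alpha_{\mathrm{l_n}}}$ in its domain, together with the symmetric copy of that interval. We then track first return times. A point of $I^{n+1}$ whose coding letter for $T_{n+1}$ is different from $\alpha_{\mathrm{l_n}}^{\pm1}$ returns in one step of $T_n$, and its letter is unchanged. A point with $T_{n+1}$-letter $\alpha_{\mathrm{l_n}}$ is a point that lies in the part of the winner interval that was cut off. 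Under $T_n$ it visits the winner interval, with the appropriate orientation $\alpha_{\mathrm{w_n}}^{-1}$ determined by the labelling convention, and then passes through $I_{\alpha_{\mathrm{l_n}}}$. This is the word $\alpha_{\mathrm{w_n}}^{-1}\alpha_{\mathrm{l_n}}$. The inverse letter $\alpha_{\mathrm{l_n}}^{-1}$ gives the reverse word $\alpha_{\mathrm{l_n}}^{-1}\alpha_{\mathrm{w_n}}$. This is forced by the involutive structure: reading a $T$-orbit backwards with $\sigma_2$ inverts letters, since $\sigma_2 T \sigma_2 = T^{-1}$. The main work here is bookkeeping, namely checking the orientation labels $\pm1$ carefully against the definition of $\sigma_1$ and of $\flat$. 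I expect this sign and labelling check, in both the same-type and the type-changing operations, to be the main obstacle. I would handle it by a direct case analysis on the four cases in the definitions of $R_{\Rtop}$ and $R_{\Rbot}$.

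\textbf{Step 2: iteration.} By the Keane property, $x \notin \tilde O$ has a well-defined orbit under every $T_n$. Composing the one-step identity gives, for every $n$,
\[
\Omega_T = \overline{\bigcup_k S^k\, \tau_0\circ\dotsb\circ\tau_{n-1}(\Omega_{T_n})}.
\]

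\textbf{Step 3: $\Omega_T \subseteq X_{\btau_T}$.} Since $\lambda$ has the Keane property, the lengths of the intervals of $T_n$ tend to zero; more precisely, $|I^n| \to 0$ by \cite{BL09}. Hence return times to $I^n$ tend to infinity. By the identity of Step 2, every word of $\Omega_T$ appears inside $\tau_0\circ\dotsb\circ\tau_{N-1}(\alpha)$, or inside a concatenation of two such images $\tau_0\circ\dotsb\circ\tau_{N-1}(\alpha\beta)$ where $\alpha\beta$ is a 2-letter word of $\Omega_{T_N}$. To get rid of the concatenation case, we use minimality of $\Omega_{T_N}$: every word $\alpha\beta$ of $\Omega_{T_N}$ itself occurs inside some $\tau_N\circ\dotsb\circ\tau_{M-1}(\gamma)$ for large $M$. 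This holds because the Rauzy--Veech walk for a Keane $\lambda$ eventually makes all entries of the composed incidence matrices positive, which is the standard positivity argument of \cite{BL09}. This gives $\Omega_T \subseteq X_{\btau_T}$.

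\textbf{Step 4: $X_{\btau_T} \subseteq \Omega_T$.} Each word $\tau_0\circ\dotsb\circ\tau_{N-1}(\alpha)$ is the itinerary of a point of $I^N_\alpha$ up to its first return to $I^N$. Such points exist and are not in $\tilde O$, so these words lie in the language of $\Omega_T$. Hence $X_{\btau_T} \subseteq \Omega_T$, and combined with Step 3 we get $\Omega_T = X_{\btau_T}$.
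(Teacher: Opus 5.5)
Your proposal is correct in substance, but it takes a longer route than the paper. Your Step 4 is the paper's entire argument. The paper shows by induction from Proposition~\ref{prop:RV_induction} that the sets $T^kI^n_\alpha\setminus\tilde{O}$, for $0\le k<|\tau_0\circ\dotsb\circ\tau_{n-1}(\alpha)|$, partition $\tilde{I}\setminus\tilde{O}$, and that points of $I^n_\alpha$ have itinerary $\tau_0\circ\dotsb\circ\tau_{n-1}(\alpha)$ up to their first return. It deduces $X_{\btau_T}\subseteq\Omega_T$. It then gets equality at once from minimality of $(\Omega_T,S)$, since $X_{\btau_T}$ is a nonempty closed shift-invariant subset.

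Your Steps 1--3 instead prove the reverse inclusion directly. As written, Step 3 is either under-justified or redundant:
\begin{itemize}
\item Positivity of the composed incidence matrices only shows that every \emph{letter} occurs in $\tau_N\circ\dotsb\circ\tau_{M-1}(\gamma)$. It does not show that every two-letter word $\alpha\beta$ of $\Omega_{T_N}$ occurs there.
\item If you use minimality (uniform recurrence) of $\Omega_{T_N}$ instead, you must know that the long blocks $\tau_N\circ\dotsb\circ\tau_{M-1}(\gamma)$ lie in its language. That is Step 4 at level $N$. But then the same reasoning at level $0$ already gives $\Omega_T=X_{\btau_T}$, and Steps 1--3 become unnecessary.
\end{itemize}

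If you want a genuinely minimality-free proof, replace the positivity remark by a geometric argument. The set of $y\in I^N_\alpha$ with $T_Ny\in I^N_\beta$ is a nonempty open interval. Its image under $T_N$ is therefore not contained in $I^M\times\{0,1\}$ once $|I^M|$ is small. Hence some occurrence of $\alpha\beta$ lies inside a single block $\tau_N\circ\dotsb\circ\tau_{M-1}(\gamma)$. This gives a self-contained description of the language of $\Omega_T$ as the factors of the words $\tau_0\circ\dotsb\circ\tau_{M-1}(\gamma)$. The paper's route is shorter because it outsources that work to the known minimality of natural codings.

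Two small corrections to Step 1:
\begin{itemize}
\item The points coded $\alpha_{\mathrm{l_n}}$ by $T_{n+1}$ are not in the cut-off part of the winner interval, which leaves the domain. They are in the sub-interval of $I_{\alpha_{\mathrm{w_n}}^{-1}}$ that $\sigma_1$ maps onto that cut-off part. So they are coded $\alpha_{\mathrm{w_n}}^{-1}$ at time $0$, lie in $I_{\alpha_{\mathrm{l_n}}}$ at time $1$, and return at time $2$.
\item The symmetry that inverts letters is $\sigma_1$, via $\sigma_1T\sigma_1=T^{-1}$, not $\sigma_2$. The map $\sigma_2$ does not send $I_\gamma$ to $I_{\gamma^{-1}}$. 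With $\sigma_1$ you get $T^j(\sigma_1T^kx)=\sigma_1T^{k-j}x$, which yields $\tau_n(\alpha_{\mathrm{l_n}}^{-1})=\alpha_{\mathrm{l_n}}^{-1}\alpha_{\mathrm{w_n}}$.
\end{itemize}
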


	\begin{proof}
		The linear involution $\RV^n(T)$ is well-defined on $I^n$ for each $n \ge 0$.
		Denote by $(I_\alpha^n \colon \alpha \in \cA \cup \cA^{-1})$ the partition of $\tilde{I}^n$.
		From \Cref{prop:RV_induction} and using induction we see that $\{T^k I_\alpha^n \setminus \tilde{O} \colon \alpha \in \cA \cup \cA^{-1}, 0 \leq k < |\tau_0 \circ \tau_1 \circ \dotsb \circ \tau_{n-1}(\alpha)|\}$ is a partition of $\tilde{I} \setminus \tilde{O}$ such that if $x \in I_\alpha^n \setminus \tilde{O}$ then the itinerary of $x, Tx, \dotsc T^{|\tau_0 \circ \tau_1 \circ \dotsb \circ \tau_{n-1}(\alpha)| - 1} x$ is the word $\tau_0 \circ \tau_1 \circ \dotsb \tau_{n-1}(\alpha)$.
		Hence, $X_{\btau_T} \subseteq \Omega_T$ and, since $X_{\btau_T}$ is closed and $S$-invariant, by minimality we have $X_{\btau_T} = \Omega_T$.
	\end{proof}

	Recall that we denote by $\Id$ the identity matrix and for $\alpha, \beta \in \cA \cup \cA^{-1}$ by $E_{\alpha \beta}$ the matrix that has only one non-zero coefficient, equal to $1$, at position $(\alpha,\beta)$.
	From \Cref{thm:RV_adic} we have that the incidence matrix of $\tau_n$ is
	\[
		M_{\tau_n}
		= \Id + E_{\alpha_{\mathrm{w_n}} \alpha_{\mathrm{l_n}}^{-1}} + E_{\alpha_{\mathrm{w_n}^{-1}} \alpha_{\mathrm{l_n}}}
		= (\Id + E_{\alpha_{\mathrm{w_n}} \alpha_{\mathrm{l_n}}^{-1}})(\Id + E_{\alpha_{\mathrm{w_n}^{-1}} \alpha_{\mathrm{l_n}}}).
	\]

	This motivates the following definition.
	For an edge $\flat(\gamma) = \flat(\pi) \to \flat(\pi')$ in the labelled Rauzy diagram with winner $\alpha_{\mathrm{w_n}}$ and loser $\alpha_{\mathrm{l_n}}$ we define
	\[
		M_{\flat(\gamma)}
		= \Id + E_{\alpha_{\mathrm{w_n}} \alpha_{\mathrm{l_n}}^{-1}} + E_{\alpha_{\mathrm{w_n}^{-1}} \alpha_{\mathrm{l_n}}}.
	\]
	We extend this definition to walks: for a directed walk $\flat(\gamma) = \flat(\gamma_0) \flat(\gamma_1) \dotsb \flat(\gamma_{n-1})$ in the labelled Rauzy diagram we define $M_{\flat(\gamma)} = M_{\flat(\gamma_0)} M_{\flat(\gamma_1)} \dotsb M_{\flat(\gamma_{n-1})}$.
	We say that a (row) vector $(v(\alpha) \colon \alpha \in \cA \cup \cA^{-1})$ is \emph{symmetric} if $v(\alpha) = v(\alpha^{-1})$ for every $\alpha \in \cA \cup \cA^{-1}$.
	In such case, let $(\pr(v)(\alpha) \colon \alpha \in \cA)$ be the row vector defined by $\pr(v)(\alpha) = v(\alpha)$ for every $\alpha \in \cA$.
	Observe that if $v$ is symmetric and $\gamma$ is an edge in the unlabelled Rauzy diagram, then $v M_{\flat(\gamma)}$ is also symmetric and $\pr(v M_{\flat(\gamma)}) = \pr(v) M_{\gamma}$.
	Hence, by induction we have $\pr(v M_{\flat(\gamma)}) = \pr(v) M_{\gamma}$ for every directed walk $\gamma$.
	Let $e$ denote the row vector whose entries are all equal to one, which is symmetric.
	From the previous observation, $e M_{\tau_0} M_{\tau_1} \dotsb M_{\tau_{n-1}}$ is symmetric, so
	\[
		|\tau_0 \circ \tau_1 \circ \dotsb \circ \tau_{n-1}(\alpha)|
		= |\tau_0 \circ \tau_1 \circ \dotsb \circ \tau_{n-1}(\alpha^{-1})|, \quad \text{for each $\alpha \in \cA \cup \cA^{-1}$ and $n \geq 0$.}
	\]

	Since $(X_{\btau_T}, S)$ is a minimal aperiodic subshift and the directive sequence $\btau_T$ is unimodular, we have that $\btau_T$ is also recognizable \cite[Theorem 4.6]{BSTY19}.
	Moreover, the directive sequence $\btau_T$ is primitive:

	\begin{lem} \label{lem:primitive}
		If $T$ has the Keane property then the directive sequence $\btau_T$ is primitive.
	\end{lem}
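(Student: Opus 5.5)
The plan is to obtain primitivity from \Cref{lem:everywhere_growing}, so that the real work is to show that $\btau_T$ is everywhere growing. Recognizability of $\btau_T$ has already been established from \cite[Theorem 4.6]{BSTY19}. Minimality of $(X_{\btau_T}, S) = (\Omega_T, S)$ follows from \Cref{thm:RV_adic} together with \cite[Proposition 5.6]{BDD+17}. Once growth is shown, \Cref{lem:everywhere_growing} gives the conclusion at once.

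To prove that $\btau_T$ is everywhere growing, I would use the tower description from the proof of \Cref{thm:RV_adic}. For each $n$, the sets $T^k I^n_\alpha \setminus \tilde O$ with $0 \le k < h_n(\alpha) := |\tau_0 \circ \dotsb \circ \tau_{n-1}(\alpha)|$ partition $\tilde I \setminus \tilde O$, and points of $I^n_\alpha$ return to $\tilde I^n$ exactly after $h_n(\alpha)$ steps.

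The main step is to show that $|I^n| \to 0$. Suppose instead that $|I^n|$ decreases to a limit $L > 0$. Each Rauzy--Veech step removes the loser interval from the right end, and the losers' lengths $\lambda^n_{\alpha_{\mathrm{l}_n}}$ are summable. The standard Keane-type argument then applies. If some letter lost only finitely often, it would eventually win infinitely often against intervals whose lengths tend to $0$. This produces a right endpoint of $I^n$ that converges to a point of $\tilde I$, and that point is then a limit of discontinuity orbits, which yields a connection. This contradicts the Keane property; it is essentially \cite[Proposition 4.2]{BL09}, and I would cite it rather than reprove it.

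With $|I^n| \to 0$, fix $\alpha$. The orbit of a point of $I^n_\alpha$ meets $\tilde I^n$ again only after $h_n(\alpha)$ steps. The tower over $I^n_\alpha$ is a union of $h_n(\alpha)$ disjoint intervals, each of length $\lambda^n_\alpha \le |I^n|$. I would argue by contradiction. If $h_n(\alpha)$ stayed bounded along a subsequence, the heights over $\alpha$ would be eventually constant because they are nondecreasing in $n$. Then a fixed finite collection of points, namely the orbit segment of the shrinking base, would have first return to an arbitrarily small neighbourhood of the left endpoint in bounded time. This means the point $0$ (or its image) has a periodic or connection-type orbit, contradicting Keane and the aperiodicity of $\Omega_T$.

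An alternative I would try first, because it is cleaner, is to show that every letter of $\cA$ is the winner infinitely often. Then $M_{\tau_0} \dotsb M_{\tau_{n-1}}$ picks up positive entries in every column, and the symmetry $h_n(\alpha) = h_n(\alpha^{-1})$ noted above transfers the growth to inverse letters. The main obstacle is exactly this infinitely-often-winner/loser combinatorial statement for generalized permutations, in the non-proper setting. I would deduce it from the fact that a letter winning finitely often yields a sub-involution with a connection, again via \cite{BL09}.
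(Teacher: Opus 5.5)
Your reduction is the same as the paper's. Recognizability and minimality let you invoke \Cref{lem:everywhere_growing}, so only everywhere growing is at stake. The paper then finishes in one line. Write $h_n(\beta) = |\tau_0\circ\dotsb\circ\tau_{n-1}(\beta)|$. The definition of $\tau_n$ and the symmetry $h_n(\beta) = h_n(\beta^{-1})$ give $h_{n+1}(\alpha_{\mathrm{l_n}}^{\pm 1}) = h_n(\alpha_{\mathrm{l_n}}) + h_n(\alpha_{\mathrm{w_n}})$, and every other height is unchanged. So heights never decrease, and a letter's height grows exactly when that letter is the loser. It therefore suffices that every letter loses infinitely often, which the paper takes from \cite[Proposition 4.2]{BL09}.

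This shows that the alternative you planned to try first does not work as stated: it has winner and loser reversed. Right multiplication by $M_{\tau_n}$ adds the column of $\alpha_{\mathrm{w_n}}^{-1}$ to the column of $\alpha_{\mathrm{l_n}}$, and the column of $\alpha_{\mathrm{w_n}}$ to that of $\alpha_{\mathrm{l_n}}^{-1}$. The winner's columns are left unchanged. Knowing that every letter wins infinitely often therefore gives no growth of any given column. What you need is that every letter loses infinitely often, and passing from the winner statement to the loser statement would require an extra combinatorial argument that you have not given.

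Your main route ($|I^n|\to 0$ plus a bounded-return-time contradiction) is a genuinely different argument and can be made to work. Its last step is too vague, though. The endpoints $(0,0)$ and $(0,1)$ are not in $\Sigma$, so no connection arises, and it is not the Keane property that gets contradicted. Here is the step written out.
\begin{enumerate}
\item If $h_n(\alpha)$ is bounded, it is eventually equal to some $H$. Then $\alpha$ never loses after some step $N$, and every point of $I^n_\alpha$, $n\ge N$, has itinerary $w=\tau_0\circ\dotsb\circ\tau_{N-1}(\alpha)$.
\item Since $|I^n|\to0$, the cylinder of $w$ is an interval $(0,a)\times\{\epsilon\}$. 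On it $T^H$ is a single isometry whose continuous extension sends $(0,\epsilon)$ to an endpoint $(0,\epsilon')$. Hence $T^H(x,\epsilon)=(x,\epsilon')$ on this interval.
\item If $\epsilon'=\epsilon$, you get an interval of periodic points, which contradicts minimality.
\item If $\epsilon'\neq\epsilon$, then $T^H=\sigma_2$ on the interval. From $T=\sigma_2\sigma_1$ you get $T^k\sigma_2=\sigma_2T^{-k}$. This shows that $T^jp$ is fixed by $\sigma_2$ if $H=2j$, or by $\sigma_1$ if $H=2j+1$; both are impossible.
\end{enumerate}
With this filled in, your route replaces the paper's combinatorial input by the metric input $|I^n|\to0$ together with a geometric step. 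It is not more economical, however. Every letter losing infinitely often already implies $|I^n|\to0$: since $|I^{n+1}|=|I^n|-\lambda^n_{\alpha_{\mathrm{l_n}}}$, the loser lengths are summable, and each letter's length is nonincreasing. Once you have the loser statement, the height recursion above already finishes the proof.
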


	\begin{proof}
		From \Cref{lem:everywhere_growing} and the previous observations, it is enough to show that $\btau_T$ is everywhere growing.
		Let $(\alpha_{\mathrm{w_n}} \colon n \ge 0)$ and $(\alpha_{\mathrm{l_n}} \colon n \ge 0)$ be the sequences in $\cA$ consisting of the winners and losers of the iterates of the unlabelled Rauzy--Veech induction algorithm, respectively.
		From \Cref{thm:RV_adic} we have
		\[
			|\tau_0 \circ \tau_1 \circ \dotsb \circ \tau_n(\alpha_{\mathrm{l_n}})|
			= |\tau_0 \circ \tau_1 \circ \dotsb \circ \tau_{n-1}(\alpha_{\mathrm{l_n}})| + |\tau_0 \circ \tau_1 \circ \dotsb \circ \tau_{n-1}(\alpha_{\mathrm{w_n}})|,
		\]
		so it is enough to show that every letter is the loser for infinitely many steps of the unlabelled Rauzy--Veech induction algorithm.
		But this follows from \cite[Proposition 4.2]{BL09}.
	\end{proof}

	\section{Continuous eigenvalues of the natural symbolic codings} \label{sec:continuous_eigenvalues}

	To study the continuous eigenvalues of natural symbolic coding for linear involutions, we first describe their coboundaries.
	We then present a necessary condition for being a continuous eigenvalue of such a coding.
	Let $T$ be a linear involution on $I$ defined by the data $(\pi, \lambda)$ relative to the alphabet $\cA$ and let $\flat(\pi)$ be the labelled generalized permutation associated with $T$.
	For a finite set of row vectors $\{v_1, v_2, \dotsc, v_k\}$ we denote by $\langle v_1, v_2, \dotsc v_k \rangle$ its linear span.
	We denote by $\langle \lambda \rangle^\ort$ the subspace of row vectors $v$ in $\RR^\cA$ such that $\langle v, \lambda^\tr \rangle = 0$.
    
	\subsection{Coboundaries of the natural symbolic codings} For a word $w$ in the language of $\Omega_T$, a \emph{return word} to $w$ is a nonempty word $u$ such that $uw$ is in the language of $\Omega_T$ and $uw$ has exactly two occurrences of $w$, one as a prefix and one as a suffix.
	A \emph{letter-coboundary} on $\Omega_T$ is a morphism $c \colon F_\cA \to \RR$ (with respect to the additive structure of $\RR$) such that for every $\alpha \in \cA \cup \cA^{-1}$ and every return word $u$ to $\alpha$ in $\Omega_T$ we have $c(u) = 0$.
	We denote by $\cC(\Omega_T)$ the \emph{coboundary vector space} consisting of row vectors $(c(\alpha) \colon \alpha \in \cA \cup \cA^{-1})$, where $c$ is a letter-coboundary on $\Omega_T$.
	Consider $\cA_L$ and $\cA_R$ two copies of the alphabet $\cA \cup \cA^{-1}$ with canonical bijections $\alpha \mapsto \alpha_L$ and $\alpha \mapsto \alpha_R$ from $\cA \cup \cA^{-1}$ to $\cA_L$ and $\cA_R$, respectively.
	The \emph{extension graph} of the empty word $\Gamma_{\Omega_T}(\epsilon)$ is the bipartite undirected graph with vertex set $\cA_L \cup \cA_R$ and edge set defined as follows: if $\alpha_L \in \cA_L$ and $\beta_R \in \cA_R$, then $\alpha_L \beta_R$ is an edge if the word $\alpha \beta$ is in the language of $\Omega_T$.
	For a connected component $K$ of $\Gamma_{\Omega_T}(\epsilon)$ we define the row vectors $(e_K^L(\alpha) \colon \alpha \in \cA \cup \cA^{-1})$ and $(e_K^R(\alpha) \colon \alpha \in \cA \cup \cA^{-1})$ by
	\[
		e_K^L(\alpha)
		= \begin{cases}
			1 & \text{if $\alpha_L \in K$}\\
			0 & \text{otherwise}
		\end{cases}
		\quad \text{and} \quad
		e_K^R(\alpha)
		= \begin{cases}
			1 & \text{if $\alpha_R \in K$}\\
			0 & \text{otherwise.}
		\end{cases}
	\]
	From \cite[Proposition 6.1]{BCE26} we have that $\cC(\Omega_T)$ is spanned by the row vectors $e_K^L - e_K^R$, where $K$ ranges over the connected components of $\Gamma_{\Omega_T}(\epsilon)$.
	It turns out that $\cC(\Omega_T)$ only depends on the labelled generalized permutation $\flat(\pi)$ that defines the linear involution $T$:

	\begin{lem} \label{lem:c_pi}
		We have $\cC(\Omega_T) = \langle c_{\flat(\pi)} \rangle$, where $(c_{\flat(\pi)}(\alpha) \colon \alpha \in \cA \cup \cA^{-1})$ is defined by
		\[
			c_{\flat(\pi)}(\alpha)
			= \begin{cases}
				1  & \text{if $\alpha$ is duplicate in the top row of $\flat(\pi)$} \\
				-1 & \text{if $\alpha$ is duplicate in the bottom row of $\flat(\pi)$} \\
				0  & \text{otherwise.}
			\end{cases}
		\]
	\end{lem}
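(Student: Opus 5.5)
The plan is to use \cite[Proposition 6.1]{BCE26}, which says that $\cC(\Omega_T)$ is spanned by the vectors $e_K^L - e_K^R$ as $K$ runs over the connected components of $\Gamma_{\Omega_T}(\epsilon)$. So the task reduces to determining these components explicitly.

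The length-two words of $\Omega_T$ are the words $\alpha\beta$ for which $T(I_\alpha)\cap I_\beta \neq \emptyset$. Since $T=\sigma_2\circ\sigma_1$, the image $T(I_\alpha)$ is the interval of the opposite component lying under or over $I_{\alpha^{-1}}$. Hence $\alpha_L$ is adjacent to exactly those $\beta_R$ with $I_\beta$ meeting $\sigma_2(I_{\alpha^{-1}})$. I would describe this as a ``zig-zag'' graph. Order the intervals $\sigma_2(I_{\alpha^{-1}})$ of one component, viewed as left vertices, and the intervals $I_\beta$ of the opposite component, viewed as right vertices, by position. Two consecutive intervals in either family overlap with a common interval of the other family unless a point of $\Sigma$ lying on one side coincides with a point of $\sigma_2(\Sigma)$ on the other side. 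That coincidence is a connection with $n=0$, which the Keane property excludes. Therefore the overlap graph restricted to each component of $\tilde I$ is connected: it is a path alternating between left and right vertices. This gives exactly two connected components, $K_0$ and $K_1$. The component $K_\epsilon$ contains the right vertices $\beta_R$ with $I_\beta\subseteq I\times\{\epsilon\}$, and the left vertices $\alpha_L$ with $I_{\alpha^{-1}}\subseteq I\times\{1-\epsilon\}$.

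It remains to compute $e_{K_0}^L-e_{K_0}^R$; since $e_{K_0}+e_{K_1}$ gives zero difference, the other component contributes only the negative. Let $\alpha$ be a letter. Then $e_{K_0}^L(\alpha)=1$ exactly when $\alpha^{-1}$ lies in the bottom row, and $e_{K_0}^R(\alpha)=1$ exactly when $\alpha$ lies in the top row. Checking the cases:
\begin{itemize}
\item If $\alpha$ is simple, the two indicator values agree, so the difference is $0$.
\item If $\alpha$ is duplicate in the top row, $\alpha^{-1}$ is also in the top row, so the difference is $0-1=-1$.
\item If $\alpha$ is duplicate in the bottom row, the difference is $1-0=1$.
\end{itemize}
Thus $e_{K_0}^L-e_{K_0}^R=-c_{\flat(\pi)}$, and the span is $\langle c_{\flat(\pi)}\rangle$. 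This vector is nonzero by \Cref{conv:duplicate}, but nonvanishing is not needed for the equality itself.

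The step I expect to be the main obstacle is the connectivity claim. It needs care in the degenerate situations: at the endpoints of $I$, where the first and last intervals of each row overlap trivially, and in the handling of the reduced language. Reducedness ($\alpha\alpha^{-1}$ never occurs) is automatic, because $T(I_\alpha)$ lies over $I_{\alpha^{-1}}$ and so cannot meet $I_{\alpha^{-1}}$. I would also verify that every interval in one family actually meets some interval in the other, which holds because both families are partitions of the same component up to finitely many points.
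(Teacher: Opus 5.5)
Your proof is correct, and it takes a different route from the paper.

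\textbf{The paper's route.} The paper does not compute the extension graph from the dynamics. It imports from the theory of specular shifts that $\Gamma_{\Omega_T}(\epsilon)$ is a union of two trees \cite[Theorem 8.4.9]{DP22}, and uses \cite[Lemma 8.4.11]{DP22} to place the left vertices. That leaves two possible placements of the right vertices. It rules out one of them by contradiction: it picks a letter duplicate in the top row (so \Cref{conv:duplicate} is genuinely used there) and applies \cite[Proposition 7.3.11]{DP22} to the inverse edge $\beta^{-1}_L\alpha^{-1}_R$.

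\textbf{Your route.} You read the edges directly off $T=\sigma_2\circ\sigma_1$: $\alpha_L\beta_R$ is an edge if and only if $\sigma_2(I_{\alpha^{-1}})\cap I_\beta\neq\emptyset$. On each component of $\tilde I$ the graph is therefore the overlap graph of two interval partitions. Such a graph is a path as soon as the partitions share no interior breakpoint, that is, as soon as $\Sigma\cap\sigma_2(\Sigma)=\emptyset$, which is exactly the absence of connections with $n=0$.

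\textbf{Comparison.} Your argument is self-contained and more explicit. It identifies both components, and shows they are paths (hence recovers the two-trees structure), without any case analysis and without \Cref{conv:duplicate}. The Keane property enters only through length-zero connections. The paper's argument avoids the geometry of $T$ and places the lemma within the general framework of specular shifts, at the cost of relying on external structure results and a contradiction step.

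Your stated worry about the endpoints of $I$ is unfounded. Those endpoints are not points of $\Sigma$, and the first (resp.\ last) intervals of the two partitions of a component always overlap in a nonempty open set. So no disconnection can occur there, and the only degenerate situation is the one the Keane property already excludes.
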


	\begin{proof}
	The graph $\Gamma_{\Omega_T}(\epsilon)$ decomposes in two trees $\cT_1$ and $\cT_2$ (this is a property shared by the so-called \emph{specular shifts}, which includes natural symbolic codings of linear involutions, see \cite[Theorem 8.4.9]{DP22}).
	We denote by $K_1$ (resp. $K_2$) the set of vertices of $\cT_1$ (resp. $\cT_2$); that is, $K_1$ and $K_2$ are the connected components of $\Gamma_{\Omega_T}(\epsilon)$.
	We partition $\cA_L$ in three sets $\cA_L^1, \cA_L^2, \cA_L^3$ consisting of letters $\alpha_L$ such that $\alpha$ is duplicate in the top row; letters $\alpha_L$ such that $\alpha$ is duplicate in the bottom row; and letters $\alpha_L$ such that $\alpha$ is simple, respectively.
	Similarly, we partition $\cA_R$ in three sets $\cA_R^1, \cA_R^2, \cA_R^3$ using analogous definitions.
	From \cite[Lemma 8.4.11]{DP22} we can suppose that if $\alpha_L$ belongs to $\cA_L^1$, then $\alpha_L, \alpha_L^{-1}\in K_1$; if $\alpha_L \in \cA_L^2$, then $\alpha_L, \alpha_L^{-1} \in K_2$; if $\alpha_L \in \cA_L^3$ and $\alpha$ is simple and occurs in the top row, then $\alpha_L \in K_2$ and $\alpha_L^{-1} \in K_1$; and if $\alpha$ is simple and occurs in the bottom row, then $\alpha_L \in K_1$ and $\alpha_L^{-1} \in K_2$.
    
    Next, from the same lemma, we can consider two cases:
	\begin{enumerate}
		\parindent=0pt
		\item If $\alpha_R$ belongs to $\cA_R^1$, then $\alpha_R, \alpha_R^{-1}$ belong to $K_1$; if $\alpha_R$ belongs to $\cA_R^2$, then $\alpha_R, \alpha_R^{-1}$ belong to $K_2$; if $\alpha_R$ belongs to $\cA_R^3$ and $\alpha$ is simple and occurs in the top row, then $\alpha_R$ belongs to $K_1$ and $\alpha_R^{-1}$ belongs to $K_2$; and if $\alpha$ is simple and occurs in the bottom row, then $\alpha_R$ belongs to $K_2$ and $\alpha_R^{-1}$ belongs to $K_1$.
		\item If $\alpha_R$ belongs to $\cA_R^1$, then $\alpha_R, \alpha_R^{-1}$ belong to $K_2$; if $\alpha_R$ belongs to $\cA_R^2$, then $\alpha_R, \alpha_R^{-1}$ belong to $K_1$; if $\alpha_R$ belongs to $\cA_R^3$ and $\alpha$ is simple and occurs in the top row, then $\alpha_R$ belongs to $K_2$ and $\alpha_R^{-1}$ belongs to $K_1$; and if $\alpha$ is simple and occurs in the bottom row, then $\alpha_R$ belongs to $K_1$ and $\alpha_R^{-1}$ belongs to $K_2$.
	\end{enumerate}
	In the first case, let $\alpha_L$ in $\cA_L^1$ (which exists since there exists duplicate letters in the top and bottom row).
	Since $\cT_1$ is connected, the only possibility is that there exists an edge $\alpha_L \beta_R$, for some $\beta_R$ in $\cA_R^3$.
	But then we have the edge $\beta_L^{-1} \alpha_R^{-1}$, which must be an edge in $\cT_2$ by \cite[Proposition 7.3.11]{DP22}.
	This contradicts the fact that $\beta_L^{-1}$ and $\alpha_R^{-1}$ belong to $K_1$.
	Hence we must be in the situation of the second case.
	A simple computation then shows that $c_{\flat(\pi)} = e_{K_1}^L - e_{K_1}^R = - (e_{K_2}^L - e_{K_2}^R)$.
	This finishes the proof since $\cC(\Omega_T)$ is spanned by the vectors $e_{K_1}^L - e_{K_1}^R$ and $e_{K_2}^L - e_{K_2}^R$.
    
	\end{proof}
	Observe that the row vector $c_{\flat(\pi)}$ is symmetric.
	By means of a standard computation, we have that if $\gamma = \pi \to \pi'$ is an edge in the unlabelled Rauzy diagram, then $c_{\flat(\pi)} M_{\flat(\gamma)} = c_{\flat(\pi')}$.
	Define $c_\pi = \pr(c_{\flat(\pi)})$ for every generalized permutation $\pi$, so that $c_\pi M_{\gamma} = c_{\pi'}$.

	\subsection{A necessary condition for being a continuous eigenvalue of the natural symbolic coding} Consider a linear involution $T$ defined by the data $(\pi, \lambda)$ that has the Keane property, so that $\RV^n(T)$ is well-defined for all $n \geq 0$.
	We say that the complex number $\exp(2 \pi i t)$ with $t \in [0,1)$ is a continuous eigenvalue of the minimal system $(\Omega_T, S)$ if there exists a continuous function $f \colon X \to \CC$, $f \neq 0$, such that $f \circ S = \exp(2 \pi i t) f$; $f$ is called a continuous eigenfunction of $(\Omega_T, S)$ associated with the eigenvalue $\exp(2 \pi i t)$.
	If $\exp(2 \pi i t)$ is an eigenvalue of the system with $t$ a rational number we say that $\exp(2 \pi i t)$ is a rational eigenvalue.
	The system $(\Omega_T, S)$ is topologically weakly-mixing if $(\Omega_T \times \Omega_T, S \times S)$ is \emph{transitive}, i.e., it has a dense orbit.
	Since the system $(\Omega_T, S)$ is minimal, it is well-known that it is topologically weakly-mixing if and only if it has no nonconstant continuous eigenfunctions \cite[Corollary 2.11]{KR69}.
	Recall that $\gamma(\pi, \lambda) = \gamma_0(\pi, \lambda) \gamma_1(\pi, \lambda) \dotsb $ is the infinite directed walk in the unlabelled Rauzy diagram, $\gamma^n(\pi, \lambda) = \gamma_0(\pi, \lambda) \gamma_1(\pi, \lambda) \dotsb \gamma_{n-1}(\pi, \lambda)$ for each $n \geq 1$, and that the stable space of $T$ is given by:
	\[
		\Es(\pi, \lambda)
		= \Big\{v \in \RR^\cA \colon \lim\limits_{n \to +\infty} v M_{\gamma^n(\pi, \lambda)} = 0\Big\}.
	\]
	Observe that $\Es(\pi, \lambda) \cap \langle c_\pi \rangle = \{0\}$ and that $\Es(\pi, \lambda) \oplus \langle c_\pi \rangle$ is a subspace of $\langle \lambda \rangle^\ort$.

	We now give a necessary condition for a complex number to be a continuous eigenvalue of $(\Omega_T, S)$:
	\begin{prop} \label{prop:necessary_condition}
		Let $T$ be a linear involution that has the Keane property defined by the data $(\pi, \lambda)$.
		If $\exp(2\pi i t)$ is a continuous eigenvalue of $(\Omega_T, S)$, then there exists a row integer vector $(m(\alpha) \colon \alpha \in \cA)$ such that $t \pr(e) + m \in \Es(\pi, \lambda) \oplus \langle c_\pi \rangle$. 
	\end{prop}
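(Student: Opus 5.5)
We will deduce the statement from the general necessary condition for continuous eigenvalues of $\cS$-adic subshifts in \cite{BCE26}, applied to the directive sequence $\btau_T$ of \Cref{thm:RV_adic}. By \Cref{lem:primitive} and the remarks before it, $\btau_T$ is primitive, unimodular and recognizable, and $X_{\btau_T} = \Omega_T$ is minimal. The criterion of \cite{BCE26} in this setting (single alphabet $\cA \cup \cA^{-1}$, possibly non-proper morphisms) should give the following. If $\exp(2\pi i t)$ is a continuous eigenvalue, there exist an integer row vector $\tilde m$ indexed by $\cA \cup \cA^{-1}$ and a letter-coboundary $c \in \cC(\Omega_T)$ such that
\[
\big(t e + \tilde m - c\big) M_{\tau_0} M_{\tau_1} \dotsb M_{\tau_{n-1}} \longrightarrow 0 .
\]
Heuristically, this says that $t|\tau_0\circ\dotsb\circ\tau_{n-1}(\alpha)|$ tends to $0$ modulo $\ZZ$, up to the coboundary correction $c$. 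The correction is forced because the morphisms are not proper, so the eigenfunction is only asymptotically constant on cylinders up to a letter-coboundary. I expect this invocation to be the main obstacle: one must check that the hypotheses of \cite{BCE26} (recognizability, primitivity, and perhaps bounded or finite-rank type conditions, which hold since all the $\tau_n$ are drawn from a finite set) are met. One must also check that the coboundary appears exactly in the form stated, namely as a fixed vector of $\cC(\Omega_T)$ propagated along $M_{\tau_n}$. If the criterion is stated with coboundaries of the levels $X^{(n)}_{\btau}$, I will use the relation $c_{\flat(\pi^n)}M_{\flat(\gamma_n)} = c_{\flat(\pi^{n+1})}$ to transport them back to level $0$.

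Next we use \Cref{lem:c_pi}: $\cC(\Omega_T) = \langle c_{\flat(\pi)}\rangle$, so $c = s\, c_{\flat(\pi)}$ for some $s \in \RR$. We then reduce to symmetric vectors. Here $e$ and $c_{\flat(\pi)}$ are symmetric, but $\tilde m$ need not be. Write $M^{(n)} = M_{\tau_0}\dotsb M_{\tau_{n-1}}$ and let $\iota$ be the involution of coordinates $\alpha \mapsto \alpha^{-1}$. From the formula for $M_{\tau_n}$ we get that $M^{(n)}$ commutes with $\iota$. Hence the convergence also holds for $\tilde m\,\iota$, and averaging gives convergence for $t e + \tfrac12(\tilde m + \tilde m\iota) - s c_{\flat(\pi)}$.

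The averaged vector is only half-integral, which needs care. Instead I will consider the difference $(\tilde m - \tilde m\iota)M^{(n)} \to 0$. This is an integer sequence, so it is eventually $0$. Since $M^{(n)}$ is invertible, $\tilde m = \tilde m\iota$, so $\tilde m$ is already symmetric. Set $m = \pr(\tilde m)$. Using $\pr(vM_{\flat(\gamma)}) = \pr(v)M_\gamma$ for symmetric $v$ and $M_{\tau_k} = M_{\flat(\gamma_k)}$, we obtain
\[
\big(t\,\pr(e) + m - s c_\pi\big) M_{\gamma^n(\pi,\lambda)} \to 0 .
\]
This gives $t\pr(e)+m - s c_\pi \in \Es(\pi,\lambda)$, i.e. $t\pr(e)+m \in \Es(\pi,\lambda)\oplus\langle c_\pi\rangle$, as claimed. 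Finally, the directive sequence in \cite{BCE26} may be telescoped or indexed differently. Since convergence along the full sequence of products is what defines $\Es$, I will make sure the criterion yields convergence for every $n$ and not only along a subsequence. If it yields only a subsequence, I will use that the matrices $M_{\gamma_n}$ lie in a finite set, so consecutive products differ by bounded factors, and convergence to $0$ along a subsequence with bounded gaps upgrades to the full sequence.
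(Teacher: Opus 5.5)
Your route is the paper's: invoke \cite{BCE26} for $\btau_T$, identify the coboundary space via \Cref{lem:c_pi}, symmetrize, and project with $\pr$. Your symmetrization is correct: the coordinate involution $\iota$ commutes with every $M_{\tau_n}$, so $(\tilde m-\tilde m\iota)M^{(n)}$ is an integer sequence tending to $0$, hence $\tilde m=\tilde m\iota$. The projection step is also fine.

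The gap is that you assume the one substantive step rather than prove it. \cite[Theorem 5.2]{BCE26}, as the paper uses it, does not give a single integer vector $\tilde m$ at level $0$. It gives $n_0$, a coboundary $c_{n_0}\in\cC(\Omega_{\RV^{n_0}(T)})$, integer vectors $\nu_n$ that depend on $n$, and $\eta_n\to 0$ such that, with $M_{[n_0,n)}=M_{\tau_{n_0}}\dotsb M_{\tau_{n-1}}$,
\[
t e M^{(n)} + c_{n_0} M_{[n_0,n)} = \nu_n + \eta_n \qquad (n\geq n_0).
\]
To reach your form you must show that the $\nu_n$ eventually follow the linear recursion. You never do this; you only say the criterion ``should give'' your form.

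The missing step is short, and your remark that the $\tau_n$ come from a finite set is the right ingredient. Multiply the identity at $n$ by $M_{\tau_n}$ and subtract it from the identity at $n+1$; the $c_{n_0}$ terms cancel, giving
\[
\nu_{n+1}-\nu_n M_{\tau_n} = \eta_n M_{\tau_n}-\eta_{n+1}.
\]
Each column of $M_{\tau_n}$ has at most two nonzero entries, both equal to $1$. So the right side has sup norm at most $2\|\eta_n\|_\infty+\|\eta_{n+1}\|_\infty\to 0$, and being integral it vanishes for all $n\geq N$. Hence $\nu_n=\nu_N M_{[N,n)}$, and for $n\geq N$
\[
\eta_n=\big(te + c_{n_0}(M^{(n_0)})^{-1} - \nu_N (M^{(N)})^{-1}\big)M^{(n)}.
\]
So you may take $\tilde m=-\nu_N(M^{(N)})^{-1}$, which is integral because $\det M^{(N)}=1$. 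Take $c=-c_{n_0}(M^{(n_0)})^{-1}$, which lies in $\langle c_{\flat(\pi)}\rangle$ by \Cref{lem:c_pi} applied to $\RV^{n_0}(T)$ together with $c_{\flat(\pi)}M^{(n_0)}=c_{\flat(\pi^{n_0})}$.

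The paper handles this step differently, by choosing $n_0$ so that $\eta_{n_0}$ is small on all words of the language of $X^{(n_0)}_{\btau_T}$; the one-step bounded-matrix argument above is more direct. With it inserted, the rest of your proof goes through. Your last paragraph on subsequences becomes unnecessary, since the identity holds for every $n\geq n_0$.
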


	\begin{proof}
		By \Cref{lem:primitive} and previous observations we have that $\btau_T$ is a primitive and recognizable directive sequence defined on a single alphabet $\cA \cup \cA^{-1}$.
		For integers $n_0$ and $n$ with $0 \leq n_0 < n$ define $M_{[n_0, n)} = M_{\flat(\gamma_{n_0}(\pi, \lambda))} M_{\flat(\gamma_{n_0 + 1}(\pi, \lambda))} \dotsb M_{\flat(\gamma_{n-1}(\pi, \lambda))}$.
		From \cite[Theorem 5.2]{BCE26} we have that if $\exp(2 \pi i t)$ is a continuous eigenvalue of $(\Omega_T, S)$ then there exist an integer $n_0 \geq 0$, a sequence of row vectors $(c_n \colon n \geq n_0)$ with $c_n \in \cC(\Omega_{\RV^n(T)})$ for each $n \geq n_0$; a sequence $(\nu_n \colon n \geq n_0)$ of row integer vectors; and a sequence of row vectors $(\eta_n \colon n \geq n_0)$ with $\lim\limits_{n \to +\infty} \eta_n = 0$ such that for all $n \geq n_0$ we have
		\[
			te M_{\flat(\gamma^n(\pi, \lambda))} + c_{n_0} M_{[n_0, n)}
			= \nu_n + \eta_n.
		\]
		Observe that $e M_{\flat(\gamma^n(\pi, \lambda))}$ and $c_{n_0} M_{[n_0, n)}$ are symmetric and that if $\alpha \in \cA \cup \cA^{-1}$ then $\lim\limits_{n \to +\infty} (\nu_n(\alpha) - \nu_n(\alpha^{-1})) = 0$, hence without loss of generality we can suppose that $\nu_n$ is symmetric.
		From this we see that
		\begin{align*}
			\nu_{n_0} M_{[n_0, n)} - \nu_n + \eta_{n_0} M_{[n_0, n)} - \eta_n
			&= te M_{\flat(\gamma^{n_0}(\pi, \lambda))} M_{[n_0, n)} + c_{n_0} M_{[n_0, n)}\\
			&\quad - (te M_{\flat(\gamma^n(\pi, \lambda))} + c_{n_0} M_{[n_0, n)})\\
			&= 0.
		\end{align*}
		Clearly, there exists a constant $\eta > 0$ such that if $\nu$ is an integer vector and $\| \nu \| < \eta$ then $\nu = 0$.
		Observe that if $\alpha \in \cA \cup \cA^{-1}$ then $\eta_{n_0}(\tau_{[n_0, n)}(\alpha)) = (\eta_{n_0} M_{[n_0, n)})(\alpha)$.
		Choose $0 \leq n_0 < n$ such that if $u$ is a word in the language of $X^{(n_0)}_{\btau_T}$ then $\| \eta_{n_0}(u) \| < \eta/2$ and $\| \eta_n \| < \eta / 2$.
		We deduce that $\nu_{n_0} M_{[n_0, n)} = \nu_n$ and $\eta_{n_0} M_{[n_0, n)} = \eta_n$ for all $n > n_0$.
		Define $c_0 = c_{n_0} M_{\flat(\gamma^{n_0}(\pi, \lambda))}^{-1}$, which belongs to $\langle c_{\flat(\pi)} \rangle$ by \Cref{lem:c_pi} and the remark made after, and let $m_0 = - \nu_{n_0} M_{\flat(\gamma^{n_0}(\pi, \lambda))}^{-1}$, which is a symmetric row integer vector since $\det(M_{\flat(\gamma^{n_0}(\pi, \lambda))}) = 1$.
		Then, we have that $\pr(c_0) \in \langle c_\pi \rangle$ and
		\begin{align*}
			\lim_{n \to +\infty} (te + c_0 + m_0) M_{\flat(\gamma^n(\pi, \lambda))}
			&= \lim_{n \to +\infty} (te M_{\flat(\gamma^n(\pi, \lambda))} + c_{n_0} M_{[n_0, n)} - \nu_{n_0} M_{[n_0, n)})\\
			&= \lim_{n \to +\infty} (te M_{\flat(\gamma^n(\pi, \lambda))} + c_{n_0} M_{[n_0, n)} - \nu_n)\\
			&= \lim_{n \to +\infty} \eta_n\\
			&= 0
		\end{align*}
		and thus $\lim\limits_{n \to +\infty} \pr(te + c_0 + m_0) M_{\gamma^n(\pi, \lambda)} = 0$, which proves the result with $m = \pr(m_0)$.
	\end{proof}

	From now on $e$ will denote the row vector whose entries are all equal to one in $\RR^\cA$, unless otherwise stated.
	Let $\Rec_\pi = \limsup\limits_{n \to +\infty} \{\lambda \in \cK_\pi \colon \RV^n(\pi, \lambda) = (\pi, \lambda^n)\}$.
	Clearly $\Rec_\pi$ is measurable in $\Delta_\pi$.
	From \cite[Theorem C]{BL09} we have that the (renormalized) Rauzy--Veech induction is recurrent.
	This implies that $\Leb_\pi(\Delta_\pi \setminus \Rec_\pi) = 0$. 

	\begin{prop} \label{prop:no_rational_eig}
		Let $\pi$ be a generalized permutation.
		\begin{itemize}
			\item If $\pi$ has at least one simple letter, then for almost all normalized admissible length vector $\lambda$ in $\Delta_\pi$ the natural symbolic coding of a linear involution defined by $(\pi, \lambda)$ does not posses a non-trivial rational eigenvalue.
			\item If all letters in $\pi$ are duplicate, then the natural symbolic coding of a linear involution defined by the data $(\pi, \lambda)$ has $-1$ as a rational eigenvalue.
		\end{itemize}
	\end{prop}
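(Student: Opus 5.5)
For the second item I would exhibit the eigenfunction directly. For the first item I would combine \Cref{prop:necessary_condition} with the recurrence of the Rauzy--Veech induction, and then evaluate at a simple letter.

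\emph{All letters duplicate.} If $\alpha$ lies in the top row of $\flat(\pi)$, then so does $\alpha^{-1}$. Hence $\sigma_1$ maps $I_\alpha\subseteq I\times\{0\}$ into $I\times\{0\}$, and $\sigma_2$ then sends it into $I\times\{1\}$. The same holds with the two rows exchanged. So every application of $T$ switches components. Define $f\colon\Omega_T\to\{\pm1\}$ by $f(\omega)=1$ if $\omega_0$ is a letter of the top row of $\flat(\pi)$ and $f(\omega)=-1$ otherwise. This $f$ depends only on $\omega_0$, so it is continuous. By the component-switching property it satisfies $f\circ S=-f$ on the dense set of codings of points of $\tilde I\setminus\tilde O$, hence everywhere. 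Therefore $-1$ is an eigenvalue. (Note that $f$ is just $c_{\flat(\pi)}(\omega_0)$, consistent with \Cref{lem:c_pi}.)

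\emph{At least one simple letter.} Take $\lambda\in\cK_\pi\cap\Rec_\pi$; this set has full measure. Suppose $\exp(2\pi i t)$ is an eigenvalue with $t=p/q\in[0,1)$ rational. By \Cref{prop:necessary_condition} there is an integer vector $m$ with $te+m=s+a c_\pi$, where $s\in\Es(\pi,\lambda)$ and $a\in\RR$. Multiplying by $q$ gives an integer vector
\[
w=pe+qm=qs+qa\,c_\pi .
\]
Since $\lambda\in\Rec_\pi$, there are infinitely many $n$ with $\RV^n(\pi,\lambda)=(\pi,\lambda^n)$. Along these $n$ we have $c_\pi M_{\gamma^n(\pi,\lambda)}=c_\pi$, because $c_\pi M_\gamma=c_{\pi'}$ edge by edge. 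Therefore
\[
wM_{\gamma^n(\pi,\lambda)}=qs\,M_{\gamma^n(\pi,\lambda)}+qa\,c_\pi\longrightarrow qa\,c_\pi .
\]
The left-hand sides are integer vectors converging along this subsequence, so they are eventually constant. Hence, for some such $n$,
\[
wM_{\gamma^n(\pi,\lambda)}=qa\,c_\pi=qa\,c_\pi M_{\gamma^n(\pi,\lambda)} .
\]
The matrix $M_{\gamma^n(\pi,\lambda)}$ is invertible, so $w=qa\,c_\pi$. Now evaluate at a simple letter $\beta$, for which $c_\pi(\beta)=0$. This gives $p+qm(\beta)=0$, so $t=p/q\in\ZZ\cap[0,1)=\{0\}$. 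Thus the only rational eigenvalue is $1$.

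The main obstacle is passing from the asymptotic statement to the exact identity $w\in\langle c_\pi\rangle$. This needs two ingredients: the invariance $c_\pi M=c_\pi$ at returns to $\pi$, which is why $\Rec_\pi$ is used, and the discreteness of integer vectors. Both are available from the results stated earlier in the paper.
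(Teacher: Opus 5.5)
Your proposal is correct and follows the paper's own argument. For the first item, the paper likewise applies \Cref{prop:necessary_condition} at $\lambda\in\Rec_\pi$, uses $c_\pi M_{\gamma^{n_k}(\pi,\lambda)}=c_\pi$ together with the discreteness of integer vectors to get the exact identity $pe+qc+qm=0$, and then evaluates at a simple letter. For the second item it uses the same eigenfunction $f(\omega)=\pm1$ according to the row of $\omega_0$; you additionally supply the justification the paper omits, namely that $T$ switches components when every letter is duplicate.
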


	\begin{proof}
		Let $\alpha$ be a simple letter in $\pi$.
		For $\lambda \in \Rec_\pi$ we know that there exists a strictly increasing sequence $(n_k \colon k \geq 0)$ such that $\RV^{n_k}(\pi, \lambda) = (\pi,\lambda^{n_k})$ for each $k \geq 0$.
		Fix $\lambda$ with this property and consider a linear involution $T$ defined by the data $(\pi, \lambda)$ such that $\exp(2 \pi i p/q)$ is a rational eigenvalue of $(\Omega_T, S)$ for some non-zero integers $p$ and $q$ with $\gcd(p, q) = 1$.
		By \Cref{prop:necessary_condition} there exist $c \in \langle c_\pi \rangle$ and a row integer vector $(m(\alpha) \colon \alpha \in \cA)$ such that $\lim\limits_{n \to +\infty} (pe + qc + qm)M_{\gamma^{n_k}(\pi, \lambda)} = 0$.
		Since $c M_{\gamma^{n_k}(\pi, \lambda)} = c$ we deduce that $qc$ is an integer vector and thus $(pe + qc + qm)M_{\gamma^{n_k}(\pi, \lambda)} = 0$ if $k$ is large enough, which implies that $pe + qc + qm = 0$.
		Therefore, $p = -qm(\alpha)$, which contradicts $\gcd(p, q) = 1$.

		If all letters in $\pi$ are duplicate, let $\flat(\pi)$ be a labelled version of $\pi$ and let $\cA_{\Rtop}$ (resp. $\cA_{\Rbot}$) be the letters in $\cA \cup \cA^{-1}$ that occur in the top (resp. bottom) row of $\flat(\pi)$.
		For a sequence $\omega = (x_n \colon n \in \ZZ)$ in $\Omega_T$ define the function $f \colon \Omega_T \to \{-1, 1\}$ as:
		\[
			f(\omega)
			= \begin{cases}
			+1 & \text{$x_0 \in \cA_{\Rtop}$} \\
			-1 & \text{$x_0 \in \cA_{\Rbot}$}.
			\end{cases}
		\]
		Then $f$ is a continuous eigenfunction of $(\Omega_T, S)$ associated with the eigenvalue $-1$.
	\end{proof}

	\section{Proof of \texorpdfstring{\Cref{thm:main}}{Theorem 1.1}} \label{sec:proof}

	We fix $s, r \ge 0$ integers and consider the generalized permutation $\sigma_{s,r}$, where
	\[
		\sigma_{s,r}
		=
		\begin{pmatrix}
			0 & A & 1 & 2 & \cdots & s & A & s+1 & s+2 & \cdots & s+r \\
			s+r & \cdots & s+2 & s+1 & B & s & \cdots & 2 & 1 & B & 0
		\end{pmatrix}.
	\]
	Note that the alphabet is $\cA = \{0, 1, \dotsc, s+r, A, B\}$.
	Let $\pi$ be a generalized permutation in the Rauzy class of $\sigma_{s,r}$ that has at least a simple letter.
	For a positive row integer vector $(h(\alpha) \colon \alpha \in \cA)$ and a row integer vector $(m(\alpha) \colon \alpha \in \cA)$ define the set
	\[
		\cS_{h, m}(\pi)
		= \{\lambda \in \Rec_{\pi} \colon t(\lambda) h + m \in \Es(\pi, \lambda) \oplus \langle c_\pi \rangle\},
	\]
	where $t(\lambda)$ is the unique value such that $t(\lambda) h + m \in \langle \lambda \rangle^\ort$, that is, $t(\lambda) = \langle -m, \lambda^\tr \rangle / \langle h, \lambda^\tr \rangle$.
	We say that the pair $(h, m)$ is \emph{trivial} for $\pi$ if there exists $r \in \QQ$ such that $m = rh$ or if $m \in \langle c_\pi \rangle$.
	In the first case, the condition $t(\lambda)h + m \in \langle \lambda \rangle^\ort$ says $t(\lambda)h + m = 0$ and consequently $t(\lambda)$ is rational; and in the second case $t(\lambda) = 0$ since $c_\pi \in \langle \lambda \rangle^\ort$.
	Observe that if $\bar{\gamma}$ is a directed walk that joins $\pi$ and $\pi'$ in the unlabelled Rauzy diagram, then $(h, m)$ is trivial for $\pi$ if and only if $(h M_{\bar{\gamma}}, m M_{\bar{\gamma}})$ is trivial for $\pi'$,

	From \Cref{prop:necessary_condition} and \Cref{prop:no_rational_eig} we have that the set of all admissible length vectors $\lambda \in \Rec_{\pi}$ such that the natural symbolic coding $(\Omega_T, S)$ of a linear involution $T$ defined by the data $(\pi, \lambda)$ is not topologically weakly mixing is contained in the union of the sets $\cS_{e, m}(\pi)$, where $m$ ranges over all row integer vectors such that $(e, m)$ is non-trivial for $\pi$.
	Therefore, to prove \Cref{thm:main} it is enough to prove that $\Leb_{\pi}(\cS_{e, m}(\pi)) = 0$ for all such pairs $(e, m)$.

	\begin{lem} \label{lem:measurable}
		If $(h, m)$ is a non-trivial pair for $\pi$ then the set $\cS_{h,m}(\pi)$ is $\Leb_{\pi}$--measurable.
	\end{lem}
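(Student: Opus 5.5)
We will write $\cS_{h,m}(\pi)$ as a countable combination of measurable sets built from finitely many matrix products. The only non-obvious ingredient is the membership condition $t(\lambda) h + m \in \Es(\pi,\lambda) \oplus \langle c_\pi \rangle$, and the plan is to express it through limits of quantities that are piecewise constant or continuous in $\lambda$.

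First, we recall that $\Rec_\pi$ and $\cK_\pi$ are measurable. On $\cK_\pi$, the map $\lambda \mapsto \gamma^n(\pi,\lambda)$ is measurable for each fixed $n$. Indeed, the set of $\lambda$ whose first $n$ Rauzy--Veech steps follow a given finite walk $\bar\gamma$ is the intersection of $\cK_\pi$ with a finite intersection of half-spaces. These half-spaces come from the comparisons $\lambda^k_{\pi^k(\ell)} \gtrless \lambda^k_{\pi^k(\ell+m)}$, and $\lambda^k$ is a linear function of $\lambda$ along a fixed walk. Consequently $\lambda \mapsto M_{\gamma^n(\pi,\lambda)}$ is a measurable, countably-valued map. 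Moreover $t(\lambda) = \langle -m,\lambda^\tr\rangle / \langle h,\lambda^\tr\rangle$ is continuous on $\Delta_\pi$, since $h$ is positive.

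Second, we remove the direct sum. Since $c_\pi M_{\gamma^n(\pi,\lambda)} = c_{\pi^n}$, and $c_{\pi^n}$ ranges over a finite set of nonzero vectors, the condition $v \in \Es(\pi,\lambda)\oplus\langle c_\pi\rangle$ is equivalent to the existence of a $u \in \RR$ with $(v - u c_\pi) M_{\gamma^n(\pi,\lambda)} \to 0$. Such a $u$ is unique because $\Es \cap \langle c_\pi\rangle = \{0\}$. We will avoid the existential quantifier over $u \in \RR$ as follows. The relevant $u$ is determined by any coordinate $\beta$ with $c_{\pi^n}(\beta) \neq 0$: we have $u = \lim_n (v M_{\gamma^n})(\beta)/c_{\pi^n}(\beta)$ along the $n$ where this is defined. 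So $u$ is obtainable as a limit of measurable functions. More robustly, we quantify over $u \in \QQ$ together with a tolerance. Explicitly, $v(\lambda) := t(\lambda)h+m$ belongs to the space if and only if
\[
\forall \epsilon\in\QQ_{>0}\ \exists u\in\QQ\ \exists N\ \forall n\ge N:\ \|(v(\lambda)-uc_\pi)M_{\gamma^n(\pi,\lambda)}\|<\epsilon \text{ fails to capture the limit;}
\]
this naive version is wrong because an error $|u-u_0|\,\|c_{\pi^n}\|$ persists and does not shrink. We therefore fix instead the canonical choice
\[
u(\lambda)=\lim_{n\to\infty}\frac{(v(\lambda)M_{\gamma^n(\pi,\lambda)})(\beta_n)}{c_{\pi^n}(\beta_n)},
\]
where $\beta_n$ is the first letter of $\cA$ (in a fixed order) with $c_{\pi^n}(\beta_n)\neq 0$. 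The set where this limit exists is measurable, and $u(\lambda)$ is measurable there.

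Third, we write
\[
\cS_{h,m}(\pi) = \Rec_\pi \cap \{u \text{ exists}\} \cap \bigcap_{k\ge1}\bigcup_{N}\bigcap_{n\ge N}\Big\{\lambda : \|(v(\lambda)-u(\lambda)c_\pi)M_{\gamma^n(\pi,\lambda)}\|<1/k\Big\}.
\]
Each inner set is measurable, because it is a measurable function of the countably-valued $M_{\gamma^n}$ and the measurable scalars $t(\lambda)$ and $u(\lambda)$. We must also check that membership forces the limit defining $u(\lambda)$ to exist and to equal the correct coefficient, and conversely that this choice of $u$ suffices. Both follow from boundedness of $\|c_{\pi^n}\|$ away from $0$ and $\infty$.

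The main obstacle is this second step: making the choice of the $\langle c_\pi\rangle$-component measurable. The non-triviality hypothesis on $(h,m)$ is not really needed for measurability, only possibly to ensure $v(\lambda) \neq 0$ or a well-posed $t$.
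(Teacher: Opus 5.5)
Your proof is correct, but it recovers the $\langle c_\pi\rangle$-component by a different mechanism than the paper.

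The paper works along the return times $n_k$ of $\lambda\in\Rec_\pi$ to $\pi$. At those times $c_\pi M_{\gamma^{n_k}(\pi,\lambda)}=c_\pi$, so the component is simply $c(\lambda)=\lim_k (t(\lambda)h+m)M_{\gamma^{n_k}(\pi,\lambda)}$. Membership then becomes $\lim_n (t(\lambda)h+m-c(\lambda))M_{\gamma^n(\pi,\lambda)}=0$.

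You instead use every time $n$, via $c_\pi M_{\gamma^n(\pi,\lambda)}=c_{\pi^n}$. Here $c_{\pi^n}$ is a nonzero vector with entries in $\{0,\pm1\}$ thanks to \Cref{conv:duplicate}, and you read off the coefficient at a coordinate $\beta_n$ where $c_{\pi^n}(\beta_n)=\pm1$.

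Your version buys three things:
\begin{itemize}
\item Recurrence is not needed to identify the coefficient; it is only used at the end to intersect with $\Rec_\pi$.
\item Measurability of $\lambda\mapsto\gamma^n(\pi,\lambda)$ is made explicit through your half-space description of the cylinders.
\item You deal with the fact that $c(\lambda)$ is a priori only defined on $\cS_{h,m}(\pi)$ itself. You restrict to the measurable set where the limit defining $u(\lambda)$ exists and check both inclusions.
\end{itemize}
The paper passes over the last two points. Its version is shorter, but it implicitly requires measurability of the $k$-th return time. You are also right that non-triviality of $(h,m)$ plays no role; the paper does not use it either.

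One side remark in your second step should be corrected. The criterion you discarded, $\forall\epsilon\in\QQ_{>0}\,\exists u\in\QQ\,\exists N\,\forall n\ge N$, $\|(v(\lambda)-uc_\pi)M_{\gamma^n(\pi,\lambda)}\|<\epsilon$, is in fact equivalent to membership, because $u$ is chosen after $\epsilon$.
\begin{itemize}
\item If $v(\lambda)=w+u_0c_\pi$ with $w\in\Es(\pi,\lambda)$, take $u\in\QQ$ with $|u-u_0|\sup_n\|c_{\pi^n}\|<\epsilon/2$.
\item Conversely, the chosen $u_\epsilon$ satisfy $|u_\epsilon-u_{\epsilon'}|\,\|c_{\pi^n}\|<\epsilon+\epsilon'$ for large $n$. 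Since $\|c_{\pi^n}\|\geq 1$, they converge to some $u_0$, and then $(v(\lambda)-u_0c_\pi)M_{\gamma^n(\pi,\lambda)}\to 0$.
\end{itemize}
This would give an even more direct proof. Your statement that it ``fails'' is wrong, though your final argument does not depend on it.
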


	\begin{proof}
		Clearly the map $\lambda \mapsto t(\lambda)$ is measurable.
		If $\lambda \in \cS_{h,m}(\pi)$ there exists a strictly increasing sequence $(n_k \colon k \geq 0)$ such that $\RV^{n_k}(\pi, \lambda) = (\pi,\lambda^{n_k})$ and $c(\lambda) \in \langle c_\pi \rangle$ such that $\lim\limits_{k \to +\infty} (t(\lambda)h + m - c(\lambda))M_{\gamma^{n_k}(\pi, \lambda)} = 0$.
		Since $c(\lambda) M_{\gamma^{n_k}(\pi, \lambda)} = c(\lambda)$ we deduce that $c(\lambda) = \lim\limits_{k \to +\infty} (t(\lambda)h + m)M_{\gamma^{n_k}(\pi, \lambda)}$ and hence the map $\lambda \mapsto c(\lambda)$ is measurable.
		Thus $\lambda$ is in $\cS_{h,m}(\pi)$ if and only if $\lim\limits_{n \to +\infty} (t(\lambda)h + m - c(\lambda))M_{\gamma^n(\pi, \lambda)} = 0$.
		From this we see that the set of such $\lambda$'s is measurable.
	\end{proof}

	\subsection{Distortion estimates} We follow \cite[Section 6.16]{BDG+24}.
	For a directed walk $\bar{\gamma}$ that joins $\pi$ and $\pi'$ in the unlabelled Rauzy diagram define the bijection $\R_{\bar{\gamma}} \colon \Delta_{\pi'} \to \Delta_\pi$ by
	\[
		\R_{\bar{\gamma}}(x')
		= \frac{M_{\bar{\gamma}} x'}{\| M_{\bar{\gamma}} x' \|_1}.
	\]
	Let $\cJ(\R_{\bar{\gamma}})$ be the Jacobian of $\R_{\bar{\gamma}}$, that is,
	\[
		\cJ(\R_{\bar{\gamma}})(x')
		= \frac{\diff \Leb_{\pi}}{\diff [(\R_{\bar{\gamma}})_\ast \Leb_{\pi'}]}(\R_{\bar{\gamma}}(x')).
	\]
	For $K \geq 1$ we say that $\bar{\gamma}$ has \emph{$K$--bounded distortion} if
	\[
		\frac{\cJ(\R_{\bar{\gamma}})(x)}{\cJ(\R_{\bar{\gamma}})(x')}
		\leq K	
	\]
	for any $x$ and $x'$ in $\Delta_{\pi'}$.

	\begin{lem} \label{lem:bd_distortion}
		Suppose that $\bar{\gamma}$ is a directed walk that joins $\pi$ and $\pi'$ in the unlabelled Rauzy diagram which has $K$--bounded distortion.
		If $U$ is any measurable set in $\Delta_{\pi'}$, then we have
		\[
			\frac{1}{K^2} \frac{\Leb_{\pi'}(U)}{\Leb_{\pi'}(\Delta_{\pi'})}
			\leq \frac{\Leb_\pi(\R_{\bar{\gamma}}(U))}{\Leb_\pi(\R_{\bar{\gamma}}(\Delta_{\pi'}))}
			\leq K^2 \frac{\Leb_{\pi'}(U)}{\Leb_{\pi'}(\Delta_{\pi'})}.
		\]
	\end{lem}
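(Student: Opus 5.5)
The plan is to reduce the inequality to a change-of-variables formula for $\R_{\bar{\gamma}}$ together with pointwise bounds on its Jacobian. First I will record the transport identity. Since $\R_{\bar{\gamma}} \colon \Delta_{\pi'} \to \Delta_\pi$ is a bijection (a projectivized linear map, hence a diffeomorphism onto its image), the definition of $\cJ(\R_{\bar{\gamma}})$ as a Radon--Nikodym derivative gives, for every measurable $U \subseteq \Delta_{\pi'}$,
\[
	\Leb_\pi(\R_{\bar{\gamma}}(U))
	= \int_{U} \cJ(\R_{\bar{\gamma}})(x') \diff \Leb_{\pi'}(x').
\]
This is just the statement that $\Leb_\pi$ restricted to $\R_{\bar{\gamma}}(\Delta_{\pi'})$ has density $\cJ(\R_{\bar{\gamma}}) \circ \R_{\bar{\gamma}}^{-1}$ with respect to $(\R_{\bar{\gamma}})_\ast \Leb_{\pi'}$, pulled back along $\R_{\bar{\gamma}}$. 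Applying the same identity with $U$ replaced by $\Delta_{\pi'}$ gives the denominator $\Leb_\pi(\R_{\bar{\gamma}}(\Delta_{\pi'}))$.

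Next I will set $J_- = \inf_{\Delta_{\pi'}} \cJ(\R_{\bar{\gamma}})$ and $J_+ = \sup_{\Delta_{\pi'}} \cJ(\R_{\bar{\gamma}})$. The $K$--bounded distortion hypothesis says exactly that $J_+ \leq K J_-$. The Jacobian is continuous and positive: it is an explicit rational function of the form $\det$-type constant times $\|M_{\bar{\gamma}} x'\|_1^{-\dim}$ on the affine hyperplane section. So $0 < J_- \leq J_+ < \infty$, and the integral identity yields
\[
	J_- \Leb_{\pi'}(U) \leq \Leb_\pi(\R_{\bar{\gamma}}(U)) \leq J_+ \Leb_{\pi'}(U),
\]
and likewise with $U=\Delta_{\pi'}$.

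Finally I will divide the two chains of inequalities. The ratio $\Leb_\pi(\R_{\bar{\gamma}}(U))/\Leb_\pi(\R_{\bar{\gamma}}(\Delta_{\pi'}))$ is bounded above by $(J_+/J_-)\,\Leb_{\pi'}(U)/\Leb_{\pi'}(\Delta_{\pi'})$. It is bounded below by $(J_-/J_+)$ times the same normalized measure. Since $J_+/J_- \leq K \leq K^2$ (as $K \geq 1$), the claimed bounds follow; in fact the argument gives the sharper constant $K$, and $K^2$ is a harmless weakening.

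The only delicate point is the first step. One has to justify that the Radon--Nikodym formulation of $\cJ$ really gives the integral identity, i.e. that $(\R_{\bar{\gamma}})_\ast \Leb_{\pi'}$ and $\Leb_\pi$ are mutually absolutely continuous on the image. I would handle this by computing directly: on the hyperplane section, $\R_{\bar{\gamma}}$ is the restriction of the projective action of the invertible matrix $M_{\bar{\gamma}}$, which maps $\{\lambda \colon \text{admissible}\}$ for $\pi'$ into that for $\pi$ because $M_{\bar{\gamma}}$ intertwines the Rauzy--Veech length relations. Its Jacobian is therefore smooth and nonvanishing, and the standard change of variables applies.
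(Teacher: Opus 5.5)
Your proof is correct and is essentially the paper's argument: both rest on the change-of-variables identity $\Leb_\pi(\R_{\bar{\gamma}}(V)) = \int_V \cJ(\R_{\bar{\gamma}})(x')\,\diff \Leb_{\pi'}(x')$, which is applied to $V = U$ and to $V = \Delta_{\pi'}$, and the two resulting estimates are then divided. The only difference is that the paper compares both integrals with $\cJ(\R_{\bar{\gamma}})(x_0)$ at a fixed point $x_0$, losing a factor $K$ on each side and hence getting $K^2$, whereas your use of the infimum and supremum of the Jacobian gives the sharper constant $K$.
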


	\begin{proof}
		Let $\mu = \Leb_\pi$ and $\nu = (\R_{\bar{\gamma}})_\ast \Leb_{\pi'}$.
		Fix $x_0$ a point in $\Delta_{\pi'}$ and let $V$ be a measurable set in $\Delta_{\pi'}$.
		From the change of coordinates formula, we have
		\begin{align*}
			\Leb_\pi(\R_{\bar{\gamma}}(V))
			&= \int_{\R_{\bar{\gamma}}(V)} \diff \mu(x) \\
			&= \int_{\R_{\bar{\gamma}}(V)} \frac{\diff \mu}{\diff \nu}(x) \diff \nu(x) \\
			&= \int_V \frac{\diff \mu}{\diff \nu}(\R_{\bar{\gamma}}(x')) \diff \nu(\R_{\bar{\gamma}}(x')) \\
			&= \int_V \cJ(\R_{\bar{\gamma}})(x') \diff \nu (\R_{\bar{\gamma}}(x')) \\
			&= \int_V \cJ(\R_{\bar{\gamma}})(x') \diff \Leb_{\pi'} (x').
		\end{align*}
		From this and the bounds on the Jacobian we obtain
		\[
			\frac{1}{K} \cJ(\R_{\bar{\gamma}})(x_0)
			\leq \frac{\Leb_\pi(\R_{\bar{\gamma}}(V))}{\Leb_{\pi'}(V)}
			\leq K \cJ(\R_{\bar{\gamma}})(x_0).
		\]
		If we use the above estimate with $V = U$ and then with $V = \Delta_{\pi'}$ we obtain
		\[
			\frac{1}{K} \cJ(\R_{\bar{\gamma}})(x_0)
			\leq \frac{\Leb_\pi(\R_{\bar{\gamma}}(U))}{\Leb_{\pi'}(U)}
			\leq K \cJ(\R_{\bar{\gamma}})(x_0),
		\]
		and
		\[
			\frac{1}{K} \cJ(\R_{\bar{\gamma}})(x_0)
			\leq \frac{\Leb_\pi(\R_{\bar{\gamma}}(\Delta_{\pi'}))}{\Leb_{\pi'}(\Delta_{\pi'})}
			\leq K \cJ(\R_{\bar{\gamma}})(x_0).
		\]
		The conclusion follows.

	\end{proof}

	The Jacobian indeed can be computed: from \cite[Equation 8.3 and Lemma 8.1]{Gad12} we know that there exists a constant $a_{\bar{\gamma}} > 0$ such that
	\[
		\cJ(\R_{\bar{\gamma}})(x')
		= \frac{a_{\bar{\gamma}}}{\| M_{\bar{\gamma}} x' \|_1^{|\cA| - 1}}.
	\]
	For a letter $\alpha \in \cA$ we define $\col_\alpha(M_{\bar{\gamma}})$ the $\alpha$--column of the matrix $M_{\bar{\gamma}}$.
	Suppose that $k \geq 1$ is a constant.
	The matrix $M_{\bar{\gamma}}$ is said to be \emph{$k$--balanced} if
	\[
		\frac{\| \col_{\alpha}(M_{\bar{\gamma}}) \|_1}{\| \col_{\alpha'}(M_{\bar{\gamma}}) \|_1}
		\leq k
	\]
	for any $\alpha$ and $\alpha'$ in $\cA$.

	\begin{lem} \label{lem:balanced}
		Suppose that $M_{\bar{\gamma}}$ is $k$--balanced.
		Then $\bar{\gamma}$ has $k^{|\cA| - 1}$--bounded distortion.
	\end{lem}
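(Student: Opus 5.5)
The plan is to use the explicit formula for the Jacobian recalled just above,
\[
	\cJ(\R_{\bar{\gamma}})(x') = \frac{a_{\bar{\gamma}}}{\| M_{\bar{\gamma}} x' \|_1^{|\cA| - 1}},
\]
and reduce everything to comparing $\| M_{\bar{\gamma}} x \|_1$ with $\| M_{\bar{\gamma}} x' \|_1$ for $x, x' \in \Delta_{\pi'}$. The constant $a_{\bar{\gamma}}$ cancels in the ratio of Jacobians, so
\[
	\frac{\cJ(\R_{\bar{\gamma}})(x)}{\cJ(\R_{\bar{\gamma}})(x')}
	= \left( \frac{\| M_{\bar{\gamma}} x' \|_1}{\| M_{\bar{\gamma}} x \|_1} \right)^{|\cA| - 1}.
\]
It therefore suffices to show that $\| M_{\bar{\gamma}} x' \|_1 / \| M_{\bar{\gamma}} x \|_1 \leq k$.

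For this step I will use that $M_{\bar{\gamma}}$ has nonnegative entries, being a product of matrices $\Id + E_{\alpha_{\mathrm{w}} \alpha_{\mathrm{l}}}$, and that $x$ has nonnegative coordinates. Then no cancellation occurs, and
\[
	\| M_{\bar{\gamma}} x \|_1 = \sum_{\alpha \in \cA} x_\alpha \| \col_\alpha(M_{\bar{\gamma}}) \|_1 .
\]
Since $x \in \Delta_{\pi'}$ satisfies $\sum_\alpha x_\alpha = 1$, this quantity is a convex combination of the column norms. It therefore lies between $\min_\alpha \| \col_\alpha(M_{\bar{\gamma}}) \|_1$ and $\max_\alpha \| \col_\alpha(M_{\bar{\gamma}}) \|_1$, and the same holds for $x'$.

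Hence the ratio $\| M_{\bar{\gamma}} x' \|_1 / \| M_{\bar{\gamma}} x \|_1$ is at most $\max_\alpha \| \col_\alpha \|_1 / \min_{\alpha'} \| \col_{\alpha'} \|_1$, which is at most $k$ by $k$--balancedness. Raising to the power $|\cA| - 1$ gives the bound $k^{|\cA|-1}$, which is exactly $k^{|\cA|-1}$--bounded distortion.

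There is no real obstacle. The only care needed is to check nonnegativity, so that the $\ell^1$ norm of $M_{\bar{\gamma}} x$ is linear in $x$ on the simplex, and to confirm that the direction of the exponent in the ratio is harmless, since the bound is symmetric in $x$ and $x'$.
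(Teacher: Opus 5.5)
Your proposal is correct and follows the paper's proof essentially step for step. The constant $a_{\bar{\gamma}}$ cancels, $\| M_{\bar{\gamma}} x' \|_1 = \sum_\alpha x'_\alpha \| \col_\alpha(M_{\bar{\gamma}}) \|_1$ lies between the minimum and maximum column norms, and $k$--balancedness then gives the bound $k^{|\cA|-1}$; your explicit check that $M_{\bar{\gamma}}$ has nonnegative entries, which makes this identity valid, is a step the paper leaves implicit.
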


	\begin{proof}
		For any $x'$ in $\Delta_{\pi'}$ we have
		\[
			\| M_{\bar{\gamma}} x' \|_1
			= \sum_{\alpha \in \cA} x'_\alpha \| \col_{\alpha} (M_{\bar{\gamma}})\|_1.
		\]
		Since $\sum_{\alpha \in \cA} x'_\alpha = 1$ it follows that
		\[
			\min_{\alpha \in \cA} \| \col_{\alpha}(M_{\bar{\gamma}}) \|_1
			\leq \| M_{\bar{\gamma}} x' \|_1
			\leq \max_{\alpha \in \cA} \| \col_{\alpha}(M_{\bar{\gamma}}) \|_1.
		\]
		Therefore, for any $x$ and $x'$ in $\Delta_{\pi'}$ we obtain
		\[
			\frac{\cJ(\R_{\bar{\gamma}})(x)}{\cJ(\R_{\bar{\gamma}})(x')}
			= \frac{\| M_{\bar{\gamma}} x' \|_1^{|\cA| - 1}}{\| M_{\bar{\gamma}} x \|_1^{|\cA| - 1}}
			\leq \frac{\max_{\alpha \in \cA} \| \col_{\alpha}(M_{\bar{\gamma}}) \|_1^{|\cA| - 1}}{\min_{\alpha \in \cA} \| \col_{\alpha}(M_{\bar{\gamma}}) \|_1^{|\cA| - 1}}
			\leq k^{|\cA| - 1}.
		\]
	\end{proof}

	\begin{lem} \label{lem:positive_matrix}
		Suppose that $\bar{\gamma} = \bar{\gamma}_1 \bar{\gamma}_2$ is a concatenation of two directed walks $\bar{\gamma}_1$ and $\bar{\gamma}_2$ in the unlabelled Rauzy diagram.
		Suppose that $M_{\bar{\gamma}_2}$ is a positive matrix.
		Then there exists a constant $k(\bar{\gamma}_2) \geq 1$ such that $M_{\bar{\gamma}}$ is $k(\bar{\gamma}_2)$--balanced.
	\end{lem}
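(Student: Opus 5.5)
The plan is to write $M_{\bar{\gamma}} = M_{\bar{\gamma}_1} M_{\bar{\gamma}_2}$ and compute each column as a nonnegative combination of the columns of $M_{\bar{\gamma}_1}$, with coefficients given by the entries of $M_{\bar{\gamma}_2}$. For $\alpha \in \cA$ we have
\[
	\col_\alpha(M_{\bar{\gamma}})
	= \sum_{\beta \in \cA} (M_{\bar{\gamma}_2})_{\beta \alpha} \col_\beta(M_{\bar{\gamma}_1}).
\]
All the matrices $M_\gamma = \Id + E_{\alpha_{\mathrm{w}} \alpha_{\mathrm{l}}}$ have nonnegative entries, so $M_{\bar{\gamma}_1}$ is nonnegative. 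Hence taking $\ell^1$ norms gives the linear identity
\[
	\| \col_\alpha(M_{\bar{\gamma}}) \|_1
	= \sum_{\beta \in \cA} (M_{\bar{\gamma}_2})_{\beta \alpha} \| \col_\beta(M_{\bar{\gamma}_1}) \|_1 .
\]

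Set $a = \min_{\beta, \alpha} (M_{\bar{\gamma}_2})_{\beta\alpha}$ and $b = \max_{\beta,\alpha} (M_{\bar{\gamma}_2})_{\beta\alpha}$. Positivity of $M_{\bar{\gamma}_2}$ gives $a \geq 1$, and in any case $a > 0$. Put $S = \sum_{\beta} \| \col_\beta(M_{\bar{\gamma}_1}) \|_1$. The identity above then yields, for every $\alpha$,
\[
	a S \leq \| \col_\alpha(M_{\bar{\gamma}}) \|_1 \leq b S .
\]
We also need $S > 0$. This holds because $M_{\bar{\gamma}_1}$ is a product of unipotent matrices with $\Id$ on the diagonal, so each column has norm at least $1$.

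Taking ratios, for any $\alpha, \alpha'$ we get $\| \col_\alpha(M_{\bar{\gamma}}) \|_1 / \| \col_{\alpha'}(M_{\bar{\gamma}}) \|_1 \leq b/a$. So we may take $k(\bar{\gamma}_2) = b/a$, which is at least $1$ and depends only on $\bar{\gamma}_2$, uniformly in $\bar{\gamma}_1$. There is no real obstacle here. The only points to check are that $M_{\bar{\gamma}_1}$ is nonnegative, so that norms add linearly, and that its columns are nonzero.
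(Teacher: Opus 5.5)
Your proposal is correct and follows essentially the same route as the paper: both expand $\|\col_\alpha(M_{\bar{\gamma}})\|_1 = \sum_{\beta,\gamma} M_{\bar{\gamma}_1}(\beta,\gamma)\, M_{\bar{\gamma}_2}(\gamma,\alpha)$, use the nonnegativity of $M_{\bar{\gamma}_1}$, and take $k(\bar{\gamma}_2)$ to be the ratio of the largest to the smallest entry of $M_{\bar{\gamma}_2}$. You also check explicitly that the columns of $M_{\bar{\gamma}}$ are nonzero, so the ratios are defined; the paper leaves this point implicit.
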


	\begin{proof}
		Let $k(\bar{\gamma}_2) = \max_{\alpha, \beta \in \cA} M_{\bar{\gamma}_2}(\alpha, \beta) / \min_{\alpha, \beta \in \cA} M_{\bar{\gamma}_2}(\alpha, \beta)$.
		For $\alpha$ and $\alpha'$ in $\cA$ we have
		\begin{align*}
			\| \col_\alpha(M_{\bar{\gamma}}) \|_1
			&= \sum_{\beta \in \cA} M_{\bar{\gamma_1} \bar{\gamma_2}}(\beta, \alpha) \\
			&= \sum_{\beta \in \cA} \sum_{\gamma \in \cA} M_{\bar{\gamma}_1}(\beta, \gamma) M_{\bar{\gamma}_2}(\gamma, \alpha) \\
			&\leq k(\bar{\gamma}_2) \sum_{\beta \in \cA} \sum_{\gamma \in \cA} M_{\bar{\gamma}_1}(\beta, \gamma) M_{\bar{\gamma}_2}(\gamma, \alpha') \\
			&= k(\bar{\gamma}_2) \| \col_{\alpha'}(M_{\bar{\gamma}}) \|_1.
		\end{align*}
	\end{proof}

	\begin{lem} \label{lem:image}
		Suppose that $(h, m)$ is a non-trivial pair for $\pi$ and let $\bar{\gamma}$ be a directed walk that joins $\pi$ and $\pi'$ in the unlabelled Rauzy diagram.
		Then
		\[
			\R_{\bar{\gamma}}(\cS_{h M_{\bar{\gamma}}, m M_{\bar{\gamma}}}(\pi'))
			= \cS_{h, m}(\pi).
		\]
	\end{lem}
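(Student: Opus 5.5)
The plan is to transport every ingredient in the definition of $\cS_{h,m}(\pi)$ through the projective map $\R_{\bar{\gamma}}$, using the linear algebra relations between $\pi$ and $\pi'$ already established. The statement should be read inside the simplex $\R_{\bar{\gamma}}(\Delta_{\pi'})$: what we prove is that $\lambda' \in \cS_{hM_{\bar{\gamma}}, mM_{\bar{\gamma}}}(\pi')$ if and only if $\lambda = \R_{\bar{\gamma}}(\lambda') \in \cS_{h,m}(\pi)$. This is the form needed later together with \Cref{lem:bd_distortion}.

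\textbf{Step 1 (the walks are compatible).} Write $\lambda = M_{\bar{\gamma}}\lambda'/\|M_{\bar{\gamma}}\lambda'\|_1$ with $\lambda' \in \Delta_{\pi'}$.
\begin{itemize}
\item By iterating \Cref{prop:RV_induction}, the first $|\bar{\gamma}|$ steps of Rauzy--Veech induction applied to $(\pi,\lambda)$ follow exactly the edges of $\bar{\gamma}$.
\item After these steps one reaches $(\pi', \lambda'/\|M_{\bar{\gamma}}\lambda'\|_1)$.
\item Rauzy--Veech induction commutes with scaling of the length vector.
\end{itemize}
Hence $\gamma(\pi,\lambda) = \bar{\gamma}\,\gamma(\pi',\lambda')$, and so
\[ M_{\gamma^{n+|\bar{\gamma}|}(\pi,\lambda)} = M_{\bar{\gamma}} M_{\gamma^n(\pi',\lambda')}. \]
In particular, $\lambda$ has the Keane property if and only if $\lambda'$ does, because connections of $T$ are inherited by the induced map and conversely. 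The same identity shows that $v \in \Es(\pi,\lambda)$ if and only if $vM_{\bar{\gamma}} \in \Es(\pi',\lambda')$. Since $M_{\bar{\gamma}}$ is invertible, this gives $\Es(\pi,\lambda)M_{\bar{\gamma}} = \Es(\pi',\lambda')$.

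\textbf{Step 2 (the coboundary line is transported).} By the remark after \Cref{lem:c_pi}, $c_\pi M_{\bar{\gamma}} = c_{\pi'}$. Combined with Step 1, right multiplication by $M_{\bar{\gamma}}$ maps $\Es(\pi,\lambda)\oplus\langle c_\pi\rangle$ bijectively onto $\Es(\pi',\lambda')\oplus\langle c_{\pi'}\rangle$.

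\textbf{Step 3 (the parameter $t$ is preserved).} Since $\langle v, \lambda^\tr\rangle = \|M_{\bar{\gamma}}\lambda'\|_1^{-1}\langle vM_{\bar{\gamma}}, \lambda'^\tr\rangle$, we get
\[ t(\lambda) = \frac{\langle -m, \lambda^\tr\rangle}{\langle h, \lambda^\tr\rangle} = \frac{\langle -mM_{\bar{\gamma}}, \lambda'^\tr\rangle}{\langle hM_{\bar{\gamma}}, \lambda'^\tr\rangle} = t'(\lambda'), \]
where $t'$ is the parameter attached to the pair $(hM_{\bar{\gamma}}, mM_{\bar{\gamma}})$ on $\Delta_{\pi'}$. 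The denominator is positive because $hM_{\bar{\gamma}}$ is again positive. Therefore
\[ (t(\lambda)h+m)M_{\bar{\gamma}} = t'(\lambda')\,hM_{\bar{\gamma}} + mM_{\bar{\gamma}}, \]
and by Step 2, $t(\lambda)h+m \in \Es(\pi,\lambda)\oplus\langle c_\pi\rangle$ if and only if $t'(\lambda')hM_{\bar{\gamma}} + mM_{\bar{\gamma}} \in \Es(\pi',\lambda')\oplus\langle c_{\pi'}\rangle$.

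\textbf{Step 4 (the recurrence condition; main obstacle).} It remains to match the conditions $\lambda \in \Rec_\pi$ and $\lambda' \in \Rec_{\pi'}$.
\begin{itemize}
\item By Step 1, the walk of $\lambda$ visits $\pi$ infinitely often if and only if the walk of $\lambda'$ does. This is not literally the condition of visiting $\pi'$ infinitely often.
\item First attempt: show that both $\Rec$ conditions hold on full-measure sets. $\Leb_\pi(\Delta_\pi\setminus\Rec_\pi)=0$ and $\Leb_{\pi'}(\Delta_{\pi'}\setminus\Rec_{\pi'})=0$, and $\R_{\bar{\gamma}}$ is a projective diffeomorphism onto its image, so it preserves null sets. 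Hence the claimed equality holds modulo $\Leb$-null sets, which is all that is used in the measure estimates that follow.
\item Alternatively, one can observe that the argument in Steps 1--3 never uses recurrence itself. Recurrence enters only through the measurability statement \Cref{lem:measurable} and through the identification of $c(\lambda)$. So one can replace $\Rec$ by the full-measure set of Keane parameters whose walk visits every vertex of the Rauzy class infinitely often (a consequence of \cite[Theorem C]{BL09}). That set is exactly preserved by $\R_{\bar{\gamma}}$, by Step 1.
\end{itemize}
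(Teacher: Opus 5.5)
Your argument is correct and takes the same route as the paper's proof: you transport $\Es(\pi,\lambda)$, the line $\langle c_\pi\rangle$ and the parameter $t(\lambda)$ through right multiplication by $M_{\bar{\gamma}}$, using that the Rauzy--Veech walk of $\R_{\bar{\gamma}}(\lambda')$ is $\bar{\gamma}$ followed by the walk of $\lambda'$. Your extra care is justified: the paper's proof silently uses the equality only inside $\R_{\bar{\gamma}}(\Delta_{\pi'})$ and does not address the $\Rec$ conditions, which match exactly when $\pi=\pi'$ (the case of cycles used later) and otherwise, as you note, only up to a $\Leb_\pi$-null set.
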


	\begin{proof}
		Let $\lambda \in \R_{\bar{\gamma}}(\cS_{h M_{\bar{\gamma}}, m M_{\bar{\gamma}}}(\pi'))$.
		Then there exists $\lambda' \in \cS_{h M_{\bar{\gamma}}, m M_{\bar{\gamma}}}(\pi')$ such that $\lambda = \R_{\bar{\gamma}}(\lambda')$.
		From this we see that $\Es(\pi', \lambda') = \Es(\pi, \lambda) M_{\bar{\gamma}}$.
		Then $t(\lambda') h M_{\bar{\gamma}} + m M_{\bar{\gamma}} \in \Es(\pi', \lambda') \oplus \langle c_{\pi'} \rangle$ implies $t(\lambda') h + m \in \Es(\pi, \lambda) \oplus \langle c_\pi \rangle$.
		That is, $\lambda \in \cS_{h, m}(\pi)$.
		Conversely, if $\lambda \in \cS_{h, m}(\pi)$ then $t(\lambda) h + m \in \Es(\pi, \lambda) \oplus \langle c_\pi \rangle$ with $t(\lambda) = - \langle m, \lambda^\tr \rangle / \langle h, \lambda^\tr \rangle$.
		Let $\lambda' = \R_{\bar{\gamma}}^{-1}(\lambda)$.
		We obtain $\R_{\bar{\gamma}}(\lambda') = \lambda$ and $\Es(\pi', \lambda') = \Es(\pi, \lambda) M_{\bar{\gamma}}$.
		Then $t(\lambda) h M_{\bar{\gamma}} + m M_{\bar{\gamma}} \in \Es(\pi', \lambda') \oplus \langle c_{\pi'} \rangle$.
		That is, $\lambda' \in \cS_{h M_{\bar{\gamma}}, m M_{\bar{\gamma}}}(\pi')$ and $\lambda \in \R_{\bar{\gamma}}(\cS_{h M_{\bar{\gamma}}, m M_{\bar{\gamma}}}(\pi))$.
	\end{proof}

	From \Cref{lem:bd_distortion} and \Cref{lem:image} we immediately obtain the following:

	\begin{cor} \label{cor:sufficient}
		If $\bar{\gamma}$ is a directed walk that joins $\pi$ and $\sigma_{s,r}$ in the unlabelled Rauzy diagram, $(e,m)$ a non-trivial pair for $\pi$ and $\Leb_{\sigma_{s,r}}(\cS_{e M_{\bar{\gamma}}, m M_{\bar{\gamma}}}(\sigma_{s,r})) = 0$, then $\Leb_\pi(\cS_{e, m}(\pi)) = 0$.
	\end{cor}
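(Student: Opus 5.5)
The argument follows the word ``immediately'' preceding the statement: apply \Cref{lem:image} to the walk $\bar{\gamma}$ from $\pi$ to $\sigma_{s,r}$ with $(h,m) = (e,m)$. This gives
\[
	\cS_{e,m}(\pi) = \R_{\bar{\gamma}}\bigl(\cS_{e M_{\bar{\gamma}}, m M_{\bar{\gamma}}}(\sigma_{s,r})\bigr).
\]
So it suffices to show that the bijection $\R_{\bar{\gamma}} \colon \Delta_{\sigma_{s,r}} \to \Delta_\pi$ sends $\Leb_{\sigma_{s,r}}$-null sets to $\Leb_\pi$-null sets. Before that, we record that $(eM_{\bar{\gamma}}, mM_{\bar{\gamma}})$ is non-trivial for $\sigma_{s,r}$, by the remark that triviality is preserved along directed walks. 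We also note that $eM_{\bar{\gamma}}$ is a positive integer vector, since $M_{\bar{\gamma}}$ is a non-negative invertible integer matrix with $\Id \le M_{\bar{\gamma}}$. Hence both sets are well defined and $\Leb$-measurable by \Cref{lem:measurable}.

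For the measure comparison, we use the Jacobian formula from \cite{Gad12}:
\[
	\cJ(\R_{\bar{\gamma}})(x') = \frac{a_{\bar{\gamma}}}{\| M_{\bar{\gamma}} x' \|_1^{|\cA|-1}}.
\]
Since $\|M_{\bar{\gamma}}x'\|_1$ lies between the minimal and maximal column $1$-norms of $M_{\bar{\gamma}}$, every column being nonzero because $M_{\bar{\gamma}}$ is invertible, the Jacobian is bounded above and below by positive constants. This is exactly the computation in the proof of \Cref{lem:balanced}: every finite walk has $K$-bounded distortion for $K = k^{|\cA|-1}$, where $k$ is the ratio of the largest to the smallest column norm. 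Then \Cref{lem:bd_distortion}, applied with $U = \cS_{eM_{\bar{\gamma}}, mM_{\bar{\gamma}}}(\sigma_{s,r})$, gives
\[
	\Leb_\pi(\cS_{e,m}(\pi)) \le K^2\,\frac{\Leb_\pi(\R_{\bar{\gamma}}(\Delta_{\sigma_{s,r}}))}{\Leb_{\sigma_{s,r}}(\Delta_{\sigma_{s,r}})}\,\Leb_{\sigma_{s,r}}(U) = 0.
\]
Alternatively, one can bypass \Cref{lem:bd_distortion} and argue directly from the integral identity in its proof, $\Leb_\pi(\R_{\bar{\gamma}}(V)) = \int_V \cJ(\R_{\bar{\gamma}})\,\diff\Leb_{\sigma_{s,r}}$.

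The only delicate point is bookkeeping. First, $\Delta_{\sigma_{s,r}}$ and $\Delta_\pi$ are hyperplane sections of possibly different hyperplanes. Second, $\R_{\bar{\gamma}}$ is the projectivized inverse of Rauzy--Veech along $\bar{\gamma}$, so its image is the sub-simplex of $\Delta_\pi$ of parameters whose induction path begins with $\bar{\gamma}$, not all of $\Delta_\pi$. This is harmless, since \Cref{lem:image} already identifies $\cS_{e,m}(\pi)$ with the image. One should still check that $\Rec$ is respected, i.e.\ that the $\Rec$-membership built into both sets corresponds under $\R_{\bar{\gamma}}$. That also holds up to null sets, because the induction paths agree after the prefix $\bar{\gamma}$ and $\Leb(\Delta\setminus\Rec)=0$.
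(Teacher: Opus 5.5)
Your route is the paper's one-line argument: \Cref{lem:image}, then \Cref{lem:bd_distortion}. Your justification that an arbitrary finite walk has bounded distortion correctly fills in a detail the paper leaves implicit: since $M_{\bar{\gamma}}\geq \Id$ entrywise, every column has $1$-norm at least $1$, so the computation of \Cref{lem:balanced} applies. The $\Rec$ bookkeeping up to null sets is also fine.

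The problem is the sentence where you call the non-surjectivity of $\R_{\bar{\gamma}}$ harmless. You are right that $Z_{\bar{\gamma}} := \R_{\bar{\gamma}}(\Delta_{\sigma_{s,r}})$ is only the cylinder of parameters whose induction path begins with $\bar{\gamma}$, in general a proper subset of $\Delta_\pi$. But the right-hand side of the identity in \Cref{lem:image} lies inside $Z_{\bar{\gamma}}$. That identity would therefore force $\cS_{e,m}(\pi)\subseteq Z_{\bar{\gamma}}$, which nothing justifies. Indeed, the reverse inclusion in its proof sets $\lambda'=\R_{\bar{\gamma}}^{-1}(\lambda)$ and uses $\Es(\sigma_{s,r},\lambda')=\Es(\pi,\lambda)M_{\bar{\gamma}}$, and both steps need $\lambda\in Z_{\bar{\gamma}}$. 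So appealing to \Cref{lem:image} to dismiss the issue is circular. What your argument (and the paper's) actually proves is only $\Leb_\pi(\cS_{e,m}(\pi)\cap Z_{\bar{\gamma}})=0$. That restricted form is all that \Cref{lem:density_point} and \Cref{prop:intersection} need, since there the sets are only compared inside $\R_{\bar{\gamma}'}(\Delta_{\sigma_{s,r}})$ and $\R_{\delta}(\Delta_{\sigma_{s,r}})$.

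To reach $\Leb_\pi(\cS_{e,m}(\pi))=0$ you need a covering step. For $\Leb_\pi$-a.e.\ $\lambda$, the induction path from $\pi$ passes through $\sigma_{s,r}$. This is the same kind of input as in \Cref{lem:density_point}, where a.e.\ path is assumed to contain a prescribed finite walk infinitely often. Hence $\Delta_\pi$ is, up to a null set, the countable disjoint union of the cylinders $Z_{\bar{\gamma}}$, with $\bar{\gamma}$ ranging over first-passage walks from $\pi$ to $\sigma_{s,r}$. Applying your argument on each cylinder then requires $\Leb_{\sigma_{s,r}}(\cS_{eM_{\bar{\gamma}},mM_{\bar{\gamma}}}(\sigma_{s,r}))=0$ for every such $\bar{\gamma}$, not for a single one. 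This costs nothing for \Cref{thm:main}, because the contradiction argument in the last section never depends on which walk $\bar{\gamma}$ was chosen. But the corollary has to be stated and proved in this quantified form.
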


	Let $e' = e M_{\bar{\gamma}}$ and $m' = m M_{\bar{\gamma}}$.
	From now on we suppose that $\Leb_{\sigma_{s,r}}(\cS_{e', m'}(\sigma_{s,r})) > 0$ and we aim to find a contradiction if the genus is sufficiently large.

	\begin{lem} \label{lem:density_point}
		If $\Leb_{\sigma_{s,r}}(\cS_{e', m'}(\sigma_{s,r})) > 0$, then for all $\epsilon > 0$ there exists a cycle $\bar{\gamma}'$ at $\sigma_{s,r}$ in the unlabelled Rauzy diagram such that $(\bar{h}, \bar{m}) = (e' M_{\bar{\gamma}'}, m' M_{\bar{\gamma}'})$ is a non-trivial pair for $\sigma_{s,r}$ and $\Leb_{\sigma_{s,r}}(\Delta_{\sigma_{s,r}} \setminus \cS_{\bar{h}, \bar{m}}(\sigma_{s,r})) < \epsilon$.
	\end{lem}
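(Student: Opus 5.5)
We argue by a Lebesgue density point combined with bounded distortion, as in the Nogueira--Rudolph and Avila--Forni arguments. Set $\sigma = \sigma_{s,r}$ and $S = \cS_{e',m'}(\sigma)$. The set $S$ is measurable by \Cref{lem:measurable}, since $(e',m')$ is non-trivial for $\sigma$ by the remark preceding \Cref{lem:measurable}. By hypothesis $\Leb_\sigma(S) > 0$. By the Lebesgue density theorem, together with the fact that the simplices $\R_{\bar{\gamma}'}(\Delta_\sigma)$ for cycles $\bar{\gamma}'$ at $\sigma$ form a basis of Vitali type near points of $\Rec_\sigma$, almost every point of $S$ is a density point with respect to these simplices.

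The first step is to produce these simplices. Fix a cycle $\bar{\gamma}_2$ at $\sigma$ with $M_{\bar{\gamma}_2}$ positive. Such a cycle exists because the Rauzy--Veech walk of a typical $\lambda$ is primitive (\Cref{lem:primitive}) and returns to $\sigma$ infinitely often. For almost every $\lambda \in S$ the walk $\gamma(\sigma,\lambda)$ contains $\bar{\gamma}_2$ as a subword ending at $\sigma$ infinitely often; this follows from the recurrence of Rauzy--Veech induction \cite[Theorem C]{BL09} and ergodicity. Let $\bar{\gamma}'_k$ be the prefixes of $\gamma(\sigma,\lambda)$ that end with such an occurrence of $\bar{\gamma}_2$. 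By \Cref{lem:positive_matrix} and \Cref{lem:balanced}, each $\bar{\gamma}'_k$ has $K$--bounded distortion with $K = k(\bar{\gamma}_2)^{|\cA|-1}$ independent of $k$. The simplices $\R_{\bar{\gamma}'_k}(\Delta_\sigma)$ contain $\lambda$ and have diameters tending to $0$, since positive products contract the projective metric. Their shapes are uniformly controlled by the balance constant, so they are comparable to balls and density points apply. Hence there is $\lambda \in S$ with
\[
\frac{\Leb_\sigma\bigl(\R_{\bar{\gamma}'_k}(\Delta_\sigma) \setminus S\bigr)}{\Leb_\sigma\bigl(\R_{\bar{\gamma}'_k}(\Delta_\sigma)\bigr)} \longrightarrow 0 .
\]

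Next we transfer this back. Since $\R_{\bar{\gamma}'}$ is a bijection and $S = \R_{\bar{\gamma}'}(\cS_{\bar h,\bar m}(\sigma))$ by \Cref{lem:image} with $(\bar h,\bar m) = (e'M_{\bar{\gamma}'}, m'M_{\bar{\gamma}'})$, we have $\R_{\bar{\gamma}'}(\Delta_\sigma) \setminus S = \R_{\bar{\gamma}'}(\Delta_\sigma \setminus \cS_{\bar h,\bar m}(\sigma))$. Applying \Cref{lem:bd_distortion} with $U = \Delta_\sigma \setminus \cS_{\bar h,\bar m}(\sigma)$ gives
\[
\frac{\Leb_\sigma(U)}{\Leb_\sigma(\Delta_\sigma)} \le K^2 \frac{\Leb_\sigma\bigl(\R_{\bar{\gamma}'}(\Delta_\sigma)\setminus S\bigr)}{\Leb_\sigma\bigl(\R_{\bar{\gamma}'}(\Delta_\sigma)\bigr)} .
\]
Because $K$ is uniform, choosing $k$ large makes the right-hand side smaller than $\epsilon/\Leb_\sigma(\Delta_\sigma)$, which yields the required estimate. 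Non-triviality of $(\bar h,\bar m)$ follows from the remark before \Cref{lem:measurable}: triviality is invariant under multiplication by $M_{\bar{\gamma}'}$ along a walk.

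The main obstacle is the density-point step. One must verify that the nested simplices $\R_{\bar{\gamma}'_k}(\Delta_\sigma)$, which are not balls, still capture Lebesgue density at almost every point. The approach is to use the bounded-distortion and balancedness estimates to show these simplices have bounded eccentricity, that is, each contains and is contained in balls around $\lambda$ of comparable radii. Alternatively, the martingale convergence theorem can be applied to the nested partitions by Rauzy--Veech cylinders, which generate the Borel $\sigma$-algebra on $\cK_\sigma$ because the diameters shrink. This second route avoids the geometric issue entirely, and it is the one we would try first: the conditional measures of $S$ on cylinders of $\gamma(\sigma,\lambda)$ converge to $1$ almost everywhere on $S$, and in particular along the subsequence of cylinders ending with $\bar{\gamma}_2$.
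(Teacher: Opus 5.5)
Your proposal follows the paper's argument: density of $\cS_{e',m'}(\sigma_{s,r})$ along the Rauzy--Veech cylinders $\R_{\gamma^{n_k}(\sigma_{s,r},\lambda)}(\Delta_{\sigma_{s,r}})$ that end with a fixed positive cycle, uniform distortion from \Cref{lem:positive_matrix} and \Cref{lem:balanced}, transfer back through \Cref{lem:image} and \Cref{lem:bd_distortion}, and invariance of non-triviality along cycles. Your martingale-convergence justification of the density step is the right one (the paper only cites shrinking diameters), but you should drop the ``bounded eccentricity / comparable to balls'' route, because balancedness controls the Jacobian and not the shape, and these cylinders are in general very elongated.
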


	\begin{proof}
		Let $\epsilon > 0$.
		From \Cref{lem:primitive} (see also \cite[Lemma 3.4]{AR12} in the context of interval exchange transformations with involutions) we know that there exists a cycle $\bar{\gamma}_\ast$ at $\sigma_{s, r}$ in the unlabelled Rauzy diagram such that $M_{\bar{\gamma}_\ast}$ is a positive matrix.
		Since the Rauzy--Veech induction is recurrent, we know that for $\Leb_{\sigma_{s,r}}$--almost all $\lambda$ in $\cS_{e', m'}(\sigma_{s,r})$ there exists a strictly increasing sequence $(n_k \colon k \geq 0)$ such that
		\[
			\gamma_{n_k - |\bar{\gamma}_\ast|}(\sigma_{s,r}, \lambda) \dotsb \gamma_{n_k - 2}(\sigma_{s,r}, \lambda) \gamma_{n_k - 1}(\sigma_{s,r}, \lambda)
			= \bar{\gamma}_\ast,
		\]
		where $|\bar{\gamma}_\ast|$ is the length of $\bar{\gamma}_\ast$ and $\gamma(\sigma_{s, r}, \lambda) = \gamma_0(\sigma_{s, r}, \lambda) \gamma_1(\sigma_{s, r}, \lambda) \dotsc$ is the infinite directed walk in the unlabelled Rauzy diagram.
		For each $\lambda$ as before and $k \geq 0$ define $\Delta_{\sigma_{s,r}}^{n_k}(\lambda)$ as $\R_{\gamma^{n_k}(\sigma_{s,r}, \lambda)}(\Delta_{\sigma_{s,r}})$, which contains $\lambda$.
		It is a classical fact that since the matrix $M_{\bar{\gamma}_\ast}$ is positive the diameter of $\Delta_{\sigma_{s,r}}^{n_k}(\lambda)$ converge to $0$ as $k$ tends to $+\infty$, see for example \cite[Section 15.2]{Fur60} (this is exactly the criterion for unique ergodicity of interval exchange transformations \cites{Mas82, Vee82}, see also \cite{DN88} for unique ergodicity of linear involutions).
		Therefore, for $\Leb_{\sigma_{s,r}}$--almost all $\lambda$ in $\cS_{e', m'}(\sigma_{s,r})$ we have
		\[
		\lim_{k \to +\infty} \frac{\Leb_{\sigma_{s,r}}(\Delta_{\sigma_{s,r}}^{n_k}(\lambda) \cap \cS_{e', m'}(\sigma_{s,r}))}{\Leb_{\sigma_{s,r}}(\Delta_{\sigma_{s,r}}^{n_k}(\lambda))}
		= 1.
		\]
		From \Cref{lem:balanced} and \Cref{lem:positive_matrix} we know that there exists a constant $K_\ast \geq 1$ such that for every $k \geq 0$ the cycle $\gamma^{n_k}(\sigma_{s,r}, \lambda)$ has $K_\ast$--bounded distortion.
		Fix $\lambda \in \cS_{e' , m'}(\sigma_{s,r})$ and $k \geq 0$ such that
		\[
			\frac{\Leb_{\sigma_{s,r}}(\Delta_{\sigma_{s,r}}^{n_k}(\lambda) \setminus \cS_{e', m'}(\sigma_{s,r}))}{\Leb_{\sigma_{s,r}}(\Delta_{\sigma_{s,r}}^{n_k}(\lambda))}
			< \frac{\epsilon}{K_\ast^2 \Leb_{\sigma_{s,r}}(\Delta_{\sigma_{s,r}})}
		\]
		and let $\bar{\gamma}' = \gamma^{n_k}(\sigma_{s,r}, \lambda)$.
		Since $\bar{\gamma}'$ is a cycle at $\sigma_{s,r}$ we know that $(\bar{h}, \bar{m}) = (e' M_{\bar{\gamma}'}, m' M_{\bar{\gamma}'})$ is a non-trivial pair for $\sigma_{s,r}$.
		From \Cref{lem:image} we have $\R_{\bar{\gamma}'} (\cS_{e' M_{\bar{\gamma}'}, m' M_{\bar{\gamma}'}}(\sigma_{s,r})) = \cS_{e', m'}(\sigma_{s,r})$.
		Since $\R_{\bar{\gamma}'}$ is a bijection on $\Delta_{\sigma_{s,r}}$, from \Cref{lem:bd_distortion} we obtain
		\begin{align*}
			\frac{\Leb_{\sigma_{s,r}}(\Delta_{\sigma_{s,r}} \setminus \cS_{e' M_{\bar{\gamma}'}, m' M_{\bar{\gamma}'}}(\sigma_{s,r}))}{\Leb_{\sigma_{s,r}}(\Delta_{\sigma_{s,r}})}
			&\leq K_\ast^2 \frac{\Leb_{\sigma_{s,r}}(\R_{\bar{\gamma}'}(\Delta_{\sigma_{s,r}} \setminus \cS_{e' M_{\bar{\gamma}'}, m' M_{\bar{\gamma}'}}(\sigma_{s,r})))}{\Leb_{\sigma_{s,r}}(\R_{\bar{\gamma}'}(\Delta_{\sigma_{s,r}}))} \\
			&\leq K_\ast^2 \frac{\Leb_{\sigma_{s,r}}(\R_{\bar{\gamma}'}(\Delta_{\sigma_{s,r}}) \setminus \cS_{e', m'}(\sigma_{s,r}))}{\Leb_{\sigma_{s,r}}(\R_{\bar{\gamma}'}(\Delta_{\sigma_{s,r}}))} \\
			&= K_\ast^2 \frac{\Leb_{\sigma_{s,r}}(\Delta_{\sigma_{s,r}}^{n_k}(\lambda) \setminus \cS_{e', m'}(\sigma_{s,r}))}{\Leb_{\sigma_{s,r}}(\Delta_{\sigma_{s,r}}^{n_k}(\lambda))} \\
			&< \frac{\epsilon}{\Leb_{\sigma_{s,r}}(\Delta_{\sigma_{s,r}})}
		\end{align*}
		and thus $\Leb_{\sigma_{s,r}}(\Delta_{\sigma_{s,r}} \setminus \cS_{e' M_{\bar{\gamma}'}, m' M_{\bar{\gamma}'}}) < \epsilon$, as we wanted.
	\end{proof}

	\begin{prop} \label{prop:intersection}
		Suppose that $\Leb_{\sigma_{s,r}}(\cS_{e', m'}(\sigma_{s,r})) > 0$.
		If $\cC$ is a finite set of cycles at $\sigma_{s,r}$ in the unlabelled Rauzy diagram, then there exists a non-trivial pair $(\bar{h}, \bar{m}) = (e' M_{\bar{\gamma}'}, m' M_{\bar{\gamma}'})$ for $\sigma_{s,r}$, for some cycle $\bar{\gamma}'$ at $\sigma_{s,r}$ such that
		\[
			\Leb_{\sigma_{s,r}} \Bigg(\cS_{\bar{h}, \bar{m}}(\sigma_{s,r}) \cap \bigcap_{\delta \in \cC} \cS_{\bar{h}M_{\delta}, \bar{m}M_{\delta}}(\sigma_{s,r})\Bigg)
			> 0.
		\]
	\end{prop}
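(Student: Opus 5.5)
The plan is to apply \Cref{lem:density_point} with a small $\epsilon$ and then pull each set in the intersection back through \Cref{lem:image}, controlling its measure with \Cref{lem:bd_distortion}. Each cycle $\delta \in \cC$ gives a bijection $\R_\delta$ of $\Delta_{\sigma_{s,r}}$. The sets $\cS_{\bar h M_\delta, \bar m M_\delta}(\sigma_{s,r})$ are the preimages $\R_\delta^{-1}(\cS_{\bar h, \bar m}(\sigma_{s,r}))$. Hence if $\cS_{\bar h,\bar m}(\sigma_{s,r})$ has nearly full measure, so does each preimage, with loss controlled by the distortion of $\delta$. A finite intersection of sets of nearly full measure then has positive measure.

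First I fix the finite set $\cC$. For each $\delta \in \cC$ the matrix $M_\delta$ is a fixed nonnegative unimodular matrix. Its columns have positive $\ell^1$ norms, so $M_\delta$ is $k_\delta$--balanced with $k_\delta = \max_{\alpha,\alpha'} \|\col_\alpha(M_\delta)\|_1/\|\col_{\alpha'}(M_\delta)\|_1 < \infty$. By \Cref{lem:balanced}, $\delta$ then has $K_\delta$--bounded distortion with $K_\delta = k_\delta^{|\cA|-1}$. Put $K = \max_{\delta \in \cC} K_\delta$ and $L = \Leb_{\sigma_{s,r}}(\Delta_{\sigma_{s,r}})$.

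Next I choose $\epsilon > 0$ with $(|\cC| + 1) K^2 \epsilon < L$. \Cref{lem:density_point} gives a cycle $\bar\gamma'$ at $\sigma_{s,r}$ such that $(\bar h, \bar m) = (e' M_{\bar\gamma'}, m' M_{\bar\gamma'})$ is non-trivial and $\Leb_{\sigma_{s,r}}(\Delta_{\sigma_{s,r}} \setminus \cS_{\bar h, \bar m}(\sigma_{s,r})) < \epsilon$. Since each $\delta$ is a cycle at $\sigma_{s,r}$, the pair $(\bar h M_\delta, \bar m M_\delta)$ is again non-trivial by the earlier observation, so these sets are measurable by \Cref{lem:measurable}. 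By \Cref{lem:image}, $\R_\delta(\cS_{\bar h M_\delta, \bar m M_\delta}(\sigma_{s,r})) = \cS_{\bar h, \bar m}(\sigma_{s,r})$. Because $\R_\delta$ is a bijection onto its image, it maps the complement $U_\delta = \Delta_{\sigma_{s,r}} \setminus \cS_{\bar h M_\delta, \bar m M_\delta}(\sigma_{s,r})$ into $\Delta_{\sigma_{s,r}} \setminus \cS_{\bar h,\bar m}(\sigma_{s,r})$. The left inequality of \Cref{lem:bd_distortion} then gives
\[
\Leb_{\sigma_{s,r}}(U_\delta) \le K^2 \frac{L}{\Leb_{\sigma_{s,r}}(\R_\delta(\Delta_{\sigma_{s,r}}))}\,\epsilon .
\]

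The main obstacle is the factor $L/\Leb(\R_\delta(\Delta_{\sigma_{s,r}}))$. It is not $1$, since $\R_\delta$ maps onto a sub-simplex, but it is a constant depending only on the finite set $\cC$. So I should first fix $\cC$ and define $c = \min_{\delta\in\cC} \Leb_{\sigma_{s,r}}(\R_\delta(\Delta_{\sigma_{s,r}}))/L > 0$. I then choose $\epsilon$ with $\epsilon\,(1 + |\cC| K^2/c) < L$; all these constants are determined before \Cref{lem:density_point} is invoked, so this is legitimate. A union bound then shows that the complement of the intersection in the statement has measure less than $\epsilon + |\cC| K^2 \epsilon / c < L$. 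Hence the intersection has positive measure, which proves \Cref{prop:intersection}.
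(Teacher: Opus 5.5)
Your proposal is correct and follows the paper's proof essentially step for step. Like the paper, you fix a uniform distortion constant for the finitely many cycles in $\cC$ and choose $\epsilon$ in terms of $|\cC|$, that constant and $\min_{\delta\in\cC}\Leb_{\sigma_{s,r}}(\R_\delta(\Delta_{\sigma_{s,r}}))$ before invoking \Cref{lem:density_point}, then conclude via \Cref{lem:image}, \Cref{lem:bd_distortion} and a union bound. The only differences are cosmetic: you justify the uniform distortion via \Cref{lem:balanced} instead of compactness, and your initial choice $(|\cC|+1)K^2\epsilon<L$ is superseded by the corrected $\epsilon(1+|\cC|K^2/c)<L$, which is the one that actually works.
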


	\begin{proof}
		Since $\cC$ is a finite set of cycles and $\Delta_{\sigma_{s,r}}$ is compact, there exists a constant $K_\cC \geq 1$ such that each $\delta$ in $\cC$ has $K_\cC$--bounded distortion.
		Let $\eta = \Leb_{\sigma_{s,r}}(\Delta_{\sigma_{s,r}})$ and choose
		\[
			\epsilon < \min \Bigg\{ \frac{\eta}{2}, \frac{\min_{\delta \in \cC} \Leb_{\sigma_{s,r}}(\R_\delta(\Delta_{\sigma_{s,r}}))}{2 |\cC| K_\cC^2}\Bigg\}.
		\]
		From \Cref{lem:density_point} there exists a cycle $\bar{\gamma}'$ at $\sigma_{s,r}$ such that $(\bar{h}, \bar{m}) = (e' M_{\bar{\gamma}}, m' M_{\bar{\gamma}})$ is a non-trivial pair for $\sigma_{s,r}$ and $\Leb_{\sigma_{s,r}}(\Delta_{\sigma_{s,r}} \setminus \cS_{\bar{h}, \bar{m}}(\sigma_{s,r})) < \epsilon$.
		From \Cref{lem:bd_distortion} and \Cref{lem:image} we obtain the estimate
		\begin{align*}
			\Leb_{\sigma_{s,r}} \Bigg( \bigcup_{\delta \in \cC} \Delta_{\sigma_{s,r}} \setminus \cS_{\bar{h} M_{\delta}, \bar{m} M_{\delta}}(\sigma_{s,r}) \Bigg )
			&\leq \sum_{\delta \in \cC} \frac{\Leb_{\sigma_{s,r}}(\Delta_{\sigma_{s,r}} \setminus \cS_{\bar{h} M_{\delta}, \bar{m} M_{\delta}}(\sigma_{s,r}))}{\Leb_{\sigma_{s,r}}(\Delta_{\sigma_{s,r}})} \eta \\
			&\leq K_\cC^2 \eta \sum_{\delta \in \cC} \frac{\Leb_{\sigma_{s,r}}(\Delta_{\sigma_{s,r}} \setminus \cS_{\bar{h}, \bar{m}}(\sigma_{s,r}))}{\Leb_{\sigma_{s,r}} (\R_{\delta}(\Delta_{\sigma_{s,r}}))} \\
			&\leq K_\cC^2 \eta |\cC| \frac{\Leb_{\sigma_{s,r}}(\Delta_{\sigma_{s,r}} \setminus \cS_{\bar{h}, \bar{m}}(\sigma_{s,r}))}{\min_{\delta \in \cC} \Leb_{\sigma_{s,r}}(\R_\delta(\Delta_{\sigma_{s,r}}))} \\
			&< K_\cC^2 \eta |\cC| \frac{\epsilon}{\min_{\delta \in \cC} \Leb_{\sigma_{s,r}}(\R_\delta(\Delta_{\sigma_{s,r}}))} \\
			&< \frac{\eta}{2}.
		\end{align*}
		Therefore, since $\Leb_{\sigma_{s,r}} (\cS_{\bar{h}, \bar{m}}(\sigma_{s,r})) > \eta - \epsilon$ we deduce
		\[
			\Leb_{\sigma_{s,r}} \Bigg(\cS_{\bar{h}, \bar{m}}(\sigma_{s,r}) \cap \bigcap_{\delta \in \cC} \cS_{\bar{h}M_{\delta}, \bar{m}M_{\delta}}(\sigma_{s,r})\Bigg)
			> \frac{\eta}{2} - \epsilon
			> 0.
		\]
	\end{proof}

	\subsection{End of the proof of \texorpdfstring{\Cref{thm:main}}{Theorem 1.1}} For $m \geq 0$ we denote by $\Rtop^m$ (resp. $\Rbot^m$) the concatenation of $m$ top (resp. bottom) operations.
	The following computations were performed using the \texttt{surface\_dynamics} package for SageMath \cite{SageMath}.
	We refer to \cite[Section 5.2]{Arb18} for a proof.

	\begin{lem} \label{lem:linear_action}
		For each $k$ in $\{1, 2, \dotsc, s+r\} \setminus \{s+1\}$ define the directed walk $\delta_k$ that starts at $\sigma_{s,r}$ and where its sequence of top and bottom operations is
		\[
			\Rtop^{k+1} \Rbot^{s+r+1 - k} \Rtop^{s+r+1-k}.
		\]
		Define $\delta_{\Rtop}$ (resp. $\delta_{\Rbot}$) to be the directed walk that starts at $\sigma_{s,r}$, where its sequence of top and bottom operations is $\Rtop^{s+r+2}$ (resp. $\Rbot^{s+r+2}$).
		If $s \neq 0$ define $\delta_{s,r}$ to be the directed walk that starts at $\sigma_{s,r}$, where its sequence of top and bottom operations is
		\[
			\Rbot^{s+r} \Rtop \Rbot \Rtop \Rbot^{r+2} \Rtop \Rbot^s \Rtop^s \Rbot.
		\]
		If $s = 0$ and $r \neq 0$ define $\delta_{0,r}$ to be the directed walk that starts at $\sigma_{0,r}$, where its sequence of top and bottom operations is
		\[
			\Rtop \Rbot^{r} \Rtop^2 \Rbot^2 \Rtop^{r+1} \Rbot^2 \Rtop^2 \Rbot \Rtop^{r+1}.
		\]
		Then all the previously defined directed walks are cycles at $\sigma_{s,r}$ and their linear action on row vectors are given as follows:
		\begin{align*}
			z M_{\delta_k}
			&= z - z(s+r) v_{s+r} + z(k)(v_k - v_{s+r})
			&&\quad 1 \leq k \leq s, \\
			z M_{\delta_k}
			&= z - z(s+r) v_{s+r} + z(k-1)(v_{k-1} - v_{s+r})
			&&\quad s+2 \leq k \leq s+r, \\
			z M_{\delta_{\Rtop}}
			&= z - z(s+r) v_{s+r}, \\
			z M_{\delta_{\Rbot}}
			&= z + z(0) v_{0}, \\
			z M_{\delta_{s,r}}
			&= z + (3z(0) + 2z(1) + z(A) + z(B))v_0 \\
			&\quad + (z(0) + z(1))v_1 \\
			&\quad - (z(0) + z(1) + (z(A) + z(B)) / 2) (v_A + v_B), \\
			z M_{\delta_{0,r}}
			&= z - (4z(1) + 6z(r+1) + 6z(r+2))v_1 \\
			&\quad - (z(r) + 2z(r+1) + 2z(r+2))v_r \\
			&\quad + (2z(1) + 4z(r+1) + 4z(r+2))(v_A + v_B),
		\end{align*}
		where $v_0, v_1, \dotsc, v_{s+r}, v_A, v_B$ are the rows of $\Omega_{\sigma_{s,r}}$.
	\end{lem}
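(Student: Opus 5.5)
This is a finite combinatorial computation, and I would organize it so that it can be checked mechanically. I would also cross-check it against \texttt{surface\_dynamics}, as the paper indicates.

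\emph{First step: the cycles close up.} For each walk I apply the two rules defining $R_{\Rtop}$ and $R_{\Rbot}$ letter by letter, starting from $\sigma_{s,r}$. The type-changing cases matter here, since they occur when the last letter of a row is the second copy of a duplicate letter such as $A$ or $B$. For $\Rtop^{s+r+2}$ the loser at each step is the last bottom letter. It is $0$ first, then it is successively the letters that rotate in, and after $s+r+2$ steps the bottom row returns to its original cyclic order. So $\delta_{\Rtop}$ is a cycle in which the winner is always $s+r$, the last top letter. Symmetrically, $\delta_{\Rbot}$ is a cycle with winner always $0$. For the walks $\delta_k$, I would track the three blocks $\Rtop^{k+1}$, $\Rbot^{s+r+1-k}$ and $\Rtop^{s+r+1-k}$ separately. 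The first block moves the tail of the bottom row; the second moves the tail of the top row; the third restores the bottom row. The index shift $k-1$ for $k\ge s+2$ reflects the position of $B$ in the bottom row, and this is also why $k=s+1$ is excluded. The long walks $\delta_{s,r}$ and $\delta_{0,r}$ I would simply trace step by step, recording the pair (winner, loser) at each step.

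\emph{Second step: the matrices.} Once I have the winner/loser sequence, $M_\delta$ is the ordered product of the matrices $\Id+E_{\alpha_{\mathrm w}\alpha_{\mathrm l}}$. Right multiplication of a row vector $z$ by $\Id+E_{\alpha\beta}$ adds $z(\alpha)$ to coordinate $\beta$. Hence $zM_\delta$ is obtained by successively performing the updates $z(\mathrm{loser}) \mathrel{+}= z(\mathrm{winner})$. For $\delta_{\Rtop}$ this gives $z(\beta)\mapsto z(\beta)+z(s+r)$ for every loser $\beta$, so $zM_{\delta_\Rtop}=z+z(s+r)\,u$, where $u$ is the indicator of the set of losers. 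The same kind of expression appears in the other cases.

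\emph{Third step: comparison with $\Omega_{\sigma_{s,r}}$.} I then rewrite each increment vector in terms of the rows $v_\alpha$ of $\Omega_{\sigma_{s,r}}$. To do this, I compute $\tau$ for $\sigma_{s,r}$, which is the bottom row reversed, then $\ast$, then the top row, and read off $i_\alpha$ and $j_\alpha$. This yields explicit rows. For example, $-v_{s+r}$ has entries $+2$ for the letters lying entirely before $s+r$, entries $\pm1$ for interlaced letters, and $0$ otherwise. Matching this against the indicator $u$ from the second step verifies the formula for $\delta_\Rtop$, and $\delta_\Rbot$ is handled the same way.

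For $\delta_k$ the update is a composition of three transvection-like moves. I would check that the net effect is $-z(s+r)v_{s+r}+z(k)(v_k-v_{s+r})$, comparing the entries as integer vectors. The symplectic identity $M^\tr\Omega M=\Omega$ is a useful consistency check: a symplectic transvection has the form $z\mapsto z+c\,\omega(z,w)w$, and indeed $z(k)$ is essentially $\omega$-paired with $v_k$.

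\emph{Main obstacle.} I expect the hardest part to be $\delta_{s,r}$ and $\delta_{0,r}$. These are long walks with type changes, and their actions are rank-two or rank-three perturbations with coefficients such as $3,2,\tfrac12$ or $4,6$. For these I would work by induction on $s$ and $r$: I would show that the inner blocks $\Rbot^{r+2}$ and $\Rbot^s\Rtop^s$ act uniformly in $s$ and $r$, reducing the check to small cases. I would then certify the small cases by explicit computer verification with \texttt{surface\_dynamics}, exactly as the authors do (see \cite[Section 5.2]{Arb18}).
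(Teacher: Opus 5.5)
Your route is the one the paper takes; the paper only delegates the computation to \texttt{surface\_dynamics} and \cite[Section 5.2]{Arb18}. You trace the winner/loser sequence, multiply the matrices $\Id+E_{\alpha_{\mathrm w}\alpha_{\mathrm l}}$, and match the increments with rows of $\Omega_{\sigma_{s,r}}$. Your treatment of $\delta_{\Rtop}$, $\delta_{\Rbot}$ and $\delta_k$ is right for $r\ge 1$. For example, the losers of $\delta_{\Rtop}$ are $0,B,1,\dots,s,B,s+1,\dots,s+r-1$, which give exactly $-v_{s+r}$: $1$ on $0,\dots,s+r-1$, $2$ on $B$, $0$ on $A$ and on $s+r$.

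The gap is in what you call the main obstacle. For $\delta_{s,r}$ and $\delta_{0,r}$ you offer an induction ``reducing to small cases'' plus a machine check. No inductive step is given, and a computer check covers only finitely many $(s,r)$, so these two identities are not proved for general parameters.

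No induction is needed. In every top (resp.\ bottom) move the winner stays the last letter of its row; in a type-changing move the loser is inserted immediately to the left of the winner's twin. So each block $\Rtop^n$ or $\Rbot^n$ has a constant winner $w$, and it acts by $z\mapsto z+z(w)u$, where $u(\beta)$ counts how often $\beta$ loses in the block. The losers are the successive last letters of the other row. For $\delta_{s,r}$ the nine blocks are:
\begin{itemize}
\item $0$ beats $s+r,\dots,s+1,A,s,\dots,2$;
\item $1$ beats $0$;
\item $B$ beats $1$;
\item $A$ beats $B$;
\item $0$ beats $A,s+r,\dots,s+1,A$;
\item $B$ beats $0$;
\item $1$ beats $B,s,\dots,2$;
\item $A$ beats $1,\dots,s$;
\item $0$ beats $A$.
\end{itemize}
Both rows return to $\sigma_{s,r}$. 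Composing gives the increments $z(0)+z(1)+z(A)+z(B)$ at $0$ and at $B$; $3z(0)+2z(1)+z(A)+z(B)$ at $1$; $4z(0)+3z(1)+z(A)+z(B)$ at $2,\dots,s$; $2z(0)+z(1)$ at $s+1,\dots,s+r$; and $5z(0)+3z(1)+z(A)+z(B)$ at $A$. This is entrywise the stated combination of $v_0$, $v_1$, $v_A+v_B$, for all $s\ge 1$ and $r\ge 0$ at once. The same bookkeeping handles $\delta_{0,r}$.

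A uniform trace also forces you to record where the lemma actually holds. Your sketch asserts this without checking, and the defect is in the statement itself.
\begin{itemize}
\item Your claim that $\delta_{\Rtop}$ has constant winner $s+r$ needs $r\ge 1$. For $r=0$ the top row of $\sigma_{s,0}$ ends with $A$, and the first top move sends $0$ into the top row. A second top move is then undefined: its loser $B$ is the only duplicate letter left in the bottom row. Equivalently, $\lambda_A=\lambda_B$ on $\Delta_{\sigma_{s,0}}$, so afterwards $\lambda_A-\lambda_0<\lambda_B$. Hence $\delta_{\Rtop}$ and every $\delta_k$ fail to be walks when $r=0$.
\item For $s=0$, $r=1$, the trace of $\delta_{0,1}$ leaves the general pattern in its fourth block. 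The seventh block $\Rtop^2$ is undefined, since after its first move the bottom row is $A\,A$. So $\delta_{0,r}$ is a cycle only for $r\ge 2$.
\item In the formula for $zM_{\delta_{0,r}}$, the symbols $z(r+1)$ and $z(r+2)$ must be read as $z(A)$ and $z(B)$, since those letters do not exist when $s=0$. With that reading, the block trace confirms the formula for $r\ge 2$.
\end{itemize}
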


	\begin{lem} \label{lem:non_trivial}
		Let $\bar{\gamma}'$ be a cycle at $\sigma_{s,r}$ in the unlabelled Rauzy diagram, $r \in \QQ$, $\bar{h} = e' M_{\bar{\gamma}'}$ and $\bar{m} = m' M_{\bar{\gamma}'}$.
		If $(r \bar{h} + \bar{m}) \in \Es(\sigma_{s,r}, \lambda) \oplus \langle c_{\sigma_{s,r}} \rangle$ for some $\lambda \in \Rec_{\sigma_{s,r}}$, then the pair $(\bar{h}, \bar{m})$ is trivial for $\sigma_{s,r}$.
	\end{lem}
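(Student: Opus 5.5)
Since $\lambda \in \Rec_{\sigma_{s,r}}$, there is a strictly increasing sequence $(n_k)$ with $\RV^{n_k}(\sigma_{s,r},\lambda) = (\sigma_{s,r},\lambda^{n_k})$. Each $\gamma^{n_k}(\sigma_{s,r},\lambda)$ is then a cycle at $\sigma_{s,r}$, and its matrix $M_k := M_{\gamma^{n_k}(\sigma_{s,r},\lambda)}$ is a nonnegative integer matrix that fixes $c_{\sigma_{s,r}}$. The plan mirrors the first half of the proof of \Cref{prop:no_rational_eig}.

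Write $r = p/q$ with $q \geq 1$ and set $w := q(r\bar h + \bar m) = p\bar h + q\bar m$. This is an integer row vector, because $\bar h = e'M_{\bar\gamma'}$ and $\bar m = m'M_{\bar\gamma'}$ are integer vectors ($e' = eM_{\bar\gamma}$ and $m' = mM_{\bar\gamma}$ are integral). By hypothesis we can write $w = u + a\, c_{\sigma_{s,r}}$ with $u \in \Es(\sigma_{s,r},\lambda)$ and $a \in \RR$. Applying $M_k$ gives
\[
wM_k = uM_k + a\, c_{\sigma_{s,r}},
\]
and $uM_k \to 0$, so $wM_k \to a\,c_{\sigma_{s,r}}$. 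Since $wM_k$ is an integer vector and the integer lattice is discrete, $wM_k$ is eventually constant and equal to $a\,c_{\sigma_{s,r}}$. Hence $a\,c_{\sigma_{s,r}}$ is an integer vector, and for large $k$ we have $(w - a\,c_{\sigma_{s,r}})M_k = 0$.

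The matrices $M_k$ are products of elementary matrices $\Id + E_{\alpha\beta}$, so they are invertible. It follows that $w = a\,c_{\sigma_{s,r}}$, that is,
\[
p\bar h + q\bar m \in \langle c_{\sigma_{s,r}}\rangle, \qquad\text{equivalently}\qquad \bar m + r\bar h \in \langle c_{\sigma_{s,r}}\rangle .
\]

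It remains to convert this into triviality of $(\bar h,\bar m)$. If $a = 0$, then $\bar m = -r\bar h$ is a rational multiple of $\bar h$, which is the first alternative in the definition of a trivial pair. If $a \neq 0$, then $\bar m = -r\bar h + (a/q)\,c_{\sigma_{s,r}}$, and neither alternative is obviously satisfied. This is the step I expect to be the main obstacle.

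To handle $a \neq 0$, I would pull back along $\bar\gamma\bar\gamma'$. Using $c_\pi M_\gamma = c_{\pi'}$ and the invertibility of the matrices, the relation becomes
\[
m + re \in \langle c_\pi\rangle
\]
for the original pair $(e,m)$ relative to $\pi$, where $\bar\gamma$ is the walk from $\pi$ to $\sigma_{s,r}$ fixed in \Cref{cor:sufficient}. Now evaluate this at a simple letter of $\pi$, which exists by hypothesis; $c_\pi$ vanishes there. We also need to use that the set of $\lambda$ under consideration lies in $\langle\lambda\rangle^\ort$-type constraints, namely $t(\lambda)$ is defined via $\lambda$ and $c_\pi\lambda = 0$. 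The aim is to conclude either that $m + re = 0$ or that $m$ differs from $-re$ only by a multiple of $c_\pi$. In the latter case the associated $t(\lambda) = -r$ is rational, which is exactly what the definition of triviality is designed to exclude.

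If the literal definition does not cover $\bar m \in -r\bar h + \langle c\rangle$, I would instead use this rational-$t$ conclusion directly. It is ruled out by \Cref{prop:no_rational_eig} through the same integer argument, with $e$ replaced by $\bar h$, the simple letter forcing $q \mid p\,h(\alpha)$.
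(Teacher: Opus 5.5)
Your first half is the paper's argument and is correct. You use the return times $n_k$, the integrality of $p\bar h + q\bar m$, the invariance $c_{\sigma_{s,r}} M_{\gamma^{n_k}(\sigma_{s,r},\lambda)} = c_{\sigma_{s,r}}$ and the discreteness of the integer lattice to obtain $p\bar h + q\bar m \in \langle c_{\sigma_{s,r}}\rangle$.

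The gap is in converting this into triviality. You evaluate at a simple letter but never draw a conclusion from it; your stated ``aim'' only restates $m + re \in \langle c_\pi\rangle$. What the evaluation actually gives, as in the paper, is this: pull back to $pe + qm - qc' = 0$ with $c' \in \langle c_\pi\rangle$, evaluate at a simple letter $\alpha$ of $\pi$, and get $p = -q\,m(\alpha)$. Hence $r = -m(\alpha) \in \ZZ$. This is how the paper disposes of non-integral $r$. For $r = 0$ one gets $\bar m \in \langle c_{\sigma_{s,r}}\rangle$, which is the second alternative, so that subcase of your $a \neq 0$ branch is not an obstacle.

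What remains is the case $r \in \ZZ\setminus\{0\}$ with $\bar m + r\bar h$ a non-zero multiple of $c_{\sigma_{s,r}}$, and your fallback cannot handle it. There $t(\lambda) = r$ (not $-r$) is an integer, so the associated eigenvalue is $1$, which \Cref{prop:no_rational_eig} does not exclude. Also, an almost-everywhere statement about eigenvalues cannot refute an algebraic identity between $\bar h$ and $\bar m$.

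In fact this case really occurs. Take $m = -e + c_\pi$. The pair $(e,m)$ is non-trivial in the literal sense, because $c_\pi$ is non-zero but vanishes at the simple letter. Yet $\bar m = -\bar h + c_{\sigma_{s,r}}$, so the hypothesis holds with the rational $r$ equal to $1$ for every $\lambda \in \Rec_{\sigma_{s,r}}$, while $(\bar h,\bar m)$ is still non-trivial in the literal sense. So the lemma as written is false in this case, and the paper's ``contradicts $\gcd(p,q)=1$'' overlooks $q = \pm 1$.

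The right repair is the one your remark about rational $t(\lambda)$ points toward: call $(h,m)$ trivial when $m \in \QQ h + \langle c_\pi\rangle$. These are precisely the pairs with rational $t(\lambda)$, which \Cref{prop:no_rational_eig} lets one discard in the reduction before \Cref{lem:measurable}. With that definition your first paragraph proves the lemma outright, and no simple letter is needed.
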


	\begin{proof}
		Assume first that $r \neq 0$ and write $r = p/q$ for some non-zero integers $p, q$ with $\gcd(p,q) = 1$.
		There exists a strictly increasing sequence $(n_k \colon k \geq 0)$ such that $\RV^{n_k}(\sigma_{s,r}, \lambda) = (\sigma_{s,r}, \lambda^{n_k})$ for each $k \geq 0$.
		From the condition $(r \bar{h} + \bar{m}) \in \Es(\sigma_{s,r}, \lambda) \oplus \langle c_{\sigma_{s,r}} \rangle$ we see that there exists $c \in \langle c_{\sigma_{s,r}} \rangle$ such that $r \bar{h} + \bar{m} - c \in \Es(\sigma_{s,r}, \lambda)$.
		Since $c = c M_{\gamma^{n_k}(\sigma_{s,r}, \lambda)}$ for all $k \ge 0$ we have $qc = \lim\limits_{k \to +\infty} (p \bar{h} + q \bar{m}) M_{\gamma^{n_k}(\sigma_{s,r}, \lambda)}$, thus $qc$ is an integer vector.
		From $\lim\limits_{k \to +\infty} (p \bar{h} + q \bar{m} - qc)M_{\gamma^{n_k}(\sigma_{s,r}, \lambda)} = 0$ we deduce that $(p \bar{h} + q \bar{m} - qc)M_{\gamma^{n_k}(\sigma_{s,r}, \lambda)} = 0$ if $k$ is sufficiently large, which implies that $p \bar{h} + q \bar{m} - qc = p e' M_{\bar{\gamma}'} + q m' M_{\bar{\gamma}'} - q c M_{\bar{\gamma}'} = 0$ and so $p e' + q m' - qc = 0$.
		Since $e' = e M_{\bar{\gamma}}$, $m' = m M_{\bar{\gamma}}$ and $c = c' M_{\bar{\gamma}}$ for some $c' \in \langle c_\pi \rangle$ we obtain $pe + qm - qc' = 0$.
		If $\alpha \in \cA$ is a simple letter in $\pi$, we obtain $p = - qm(\alpha)$, which contradicts $\gcd(p,q) = 1$.
		This shows that $r = 0$, but then $m = c'$, $m' = c$ and $\bar{m} = c$.
		Hence the pair $(\bar{h}, \bar{m})$ is trivial for $\sigma_{s,r}$.
	\end{proof}

	Let $\cC$ be the set of cycles $\delta_{\Rtop}$, $\delta_{\Rbot}$, $\delta_k$ for $k$ in $\{1, 2, \dotsc, s+r\} \setminus \{s+1\}$ and $\delta_{s,r}$ if $s \neq 0$ or $\delta_{0,r}$ if $s = 0$ and $r \neq 0$.
	From \Cref{prop:intersection}, if $\Leb_{\sigma_{s,r}}(\cS_{e', m'}(\sigma_{s,r})) > 0$ there exists a non-trivial pair $(\bar{h}, \bar{m}) = (e' M_{\bar{\gamma}'}, m' M_{\bar{\gamma}'})$ for $\sigma_{s,r}$ for some cycle $\bar{\gamma}'$ at $\sigma_{s,r}$ such that $\Leb_{\sigma_{s,r}}(\cS) > 0$, where
	\[
		\cS
		= \cS_{\bar{h}, \bar{m}}(\sigma_{s,r}) \cap \bigcap_{\delta \in \cC} \cS_{\bar{h}M_{\delta}, \bar{m}M_{\delta}}(\sigma_{s,r}).
	\]

	\begin{prop} \label{prop:inclusion}
		Let $v_0, v_1, \dotsc, v_{s+r}, v_A, v_B$ be the rows of the matrix $\Omega_{\sigma_{s,r}}$.
		If $\lambda \in \cS$, then
		\[
			\langle v_0, v_1, \dotsc, v_{s+r}, v_A + v_B \rangle \cap \langle \lambda \rangle^\ort
			\subseteq \Es(\sigma_{s,r}, \lambda) \oplus \langle c_{\sigma_{s,r}} \rangle.
		\]
	\end{prop}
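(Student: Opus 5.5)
The plan is to exploit that a single $\lambda \in \cS$ satisfies simultaneously the membership conditions for $(\bar h,\bar m)$ and for all the translated pairs $(\bar h M_\delta, \bar m M_\delta)$, $\delta \in \cC$. Subtracting these conditions should produce explicit vectors in the finite-dimensional subspace $V(\lambda) := \Es(\sigma_{s,r},\lambda) \oplus \langle c_{\sigma_{s,r}} \rangle$, and \Cref{lem:linear_action} expresses these differences through the rows $v_\alpha$ of $\Omega_{\sigma_{s,r}}$.

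\textbf{Step 1: set-up.} Fix $\lambda \in \cS$ and write $w = t(\lambda)\bar h + \bar m \in V(\lambda)$. For each $\delta \in \cC$, write
\[
	w_\delta = t_\delta(\lambda)\,\bar h M_\delta + \bar m M_\delta \in V(\lambda),
\]
where $t_\delta(\lambda)$ is chosen so that $w_\delta \perp \lambda$. For any row vector $z$, put $D_\delta(z) = zM_\delta - z$; by \Cref{lem:linear_action}, $D_\delta(z)$ is an explicit linear combination of the $v_\alpha$ whose coefficients are linear in the coordinates of $z$. Then
\[
	w_\delta - w = D_\delta(w) + (t_\delta - t)\,\bar h M_\delta \in V(\lambda).
\]

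\textbf{Step 2: the case $t_\delta = t$ for all $\delta$.} In this case $D_\delta(w) \in V(\lambda)$ for every $\delta \in \cC$, so it is enough to read off these vectors from \Cref{lem:linear_action}:
\begin{itemize}
	\item $D_{\delta_\Rtop}(w) = -w(s+r)\,v_{s+r}$;
	\item $D_{\delta_\Rbot}(w) = w(0)\,v_0$;
	\item $D_{\delta_k}(w) - D_{\delta_\Rtop}(w) = w(k)(v_k - v_{s+r})$ (with $k-1$ in place of $k$ when $k \geq s+2$);
	\item $D_{\delta_{s,r}}(w)$, respectively $D_{\delta_{0,r}}(w)$, supplies $v_1$ and $v_A + v_B$.
\end{itemize}
When the relevant coordinates of $w$ are non-zero, these vectors span $\langle v_0, \dots, v_{s+r}, v_A+v_B\rangle$, or at least its intersection with $\langle\lambda\rangle^\ort$. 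The latter suffices, since $V(\lambda) \subseteq \langle\lambda\rangle^\ort$ and the $D_\delta(w)$ lie there. When some coordinate vanishes, I would repeat the argument with $w$ replaced by some $w_\delta$, or by $wM_\delta$ for a second cycle, using the fact that $\cS$ is an intersection over several cycles.

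\textbf{Step 3: the case $t_\delta \neq t$.} If some $t_\delta \neq t$, then $\bar h M_\delta$ differs from a vector of $V(\lambda)$ by an element of the $v$-span. Writing $\bar h M_\delta = \bar h + D_\delta(\bar h)$ and using $\bar h \equiv -\bar m / t$ modulo $V(\lambda)$, I would aim to show that $r\bar h + \bar m \in V(\lambda)$ for some rational $r$. This rationality should come from integrality of $\bar m$, of $\bar h$, and of the coefficients in \Cref{lem:linear_action}, together with the fact that $V(\lambda)$ cannot contain non-zero integer vectors other than elements of $\langle c \rangle$; the latter follows from recurrence, as in the proof of \Cref{lem:non_trivial}. \Cref{lem:non_trivial} would then make $(\bar h, \bar m)$ trivial, contradicting its choice. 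Hence $t_\delta = t$ for all $\delta$ and we are in Step 2.

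\textbf{Main obstacle.} I expect the hardest part to be Step 2: guaranteeing that the coefficients $w(0)$, $w(k)$, $w(s+r)$ and the combinations $3w(0)+2w(1)+w(A)+w(B)$, etc., can be taken non-zero, so that the extracted vectors really span the whole target space. My first attempt would be to show that a vanishing coefficient forces $w$ into a proper rational subspace. Combined with non-triviality and \Cref{lem:non_trivial}, this should again yield a contradiction; otherwise I would iterate the extraction on the already obtained vectors $w_\delta$.
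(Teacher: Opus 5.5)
There is a genuine gap: the case split between your Steps~2 and~3 goes the wrong way, because the case $t_\delta\neq t$ is not exceptional but forced. Write $V(\lambda)=\Es(\sigma_{s,r},\lambda)\oplus\langle c_{\sigma_{s,r}}\rangle\subseteq\langle\lambda\rangle^\ort$.

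First, $w(0)=t\bar h(0)+\bar m(0)\neq0$. Otherwise $t=-\bar m(0)/\bar h(0)\in\QQ$, and \Cref{lem:non_trivial} applied to $w\in V(\lambda)$ would make $(\bar h,\bar m)$ trivial. Second, the row $v_0$ of $\Omega_{\sigma_{s,r}}$ has entry $2$ at $A$, entry $1$ at each of $1,\dots,s+r$, and $0$ elsewhere. So $\langle v_0,\lambda^\tr\rangle>0$, i.e.\ $v_0\notin\langle\lambda\rangle^\ort$. Hence if $t_{\delta_{\Rbot}}=t$, Step~2 gives $w(0)v_0=D_{\delta_{\Rbot}}(w)\in V(\lambda)\subseteq\langle\lambda\rangle^\ort$, which is impossible. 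Your own remark that the $D_\delta(w)$ lie in $\langle\lambda\rangle^\ort$ already forces $w(0)=0$ here.

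So for every $\lambda\in\cS$ one has $t_{\delta_{\Rbot}}\neq t$. Step~2 therefore never applies: it tries to put a nonzero multiple of $v_0$ into $V(\lambda)$, which no argument can do. Step~3 would have to derive a contradiction from a situation that always occurs, which amounts to proving $\cS=\emptyset$. Nothing in your integrality/recurrence sketch extracts a rational $r$ from $t_\delta\neq t$, and the proposition asserts nothing that strong.

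The missing idea is to keep the discrepancy term and work modulo $\langle\bar h\rangle$, rather than trying to eliminate it. Substituting $\bar hM_\delta=\bar h+D_\delta(\bar h)$ into your identity gives
\[
w_\delta-w=D_\delta(z_\delta)+(t_\delta-t)\bar h\in V(\lambda),\qquad z_\delta=t_\delta\bar h+\bar m,
\]
so $D_\delta(z_\delta)\in V(\lambda)+\langle\bar h\rangle$.

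Evaluate the coefficients of \Cref{lem:linear_action} at $z_\delta$ rather than at $w$. Each has the form $\beta z_\delta^\tr$ with $\beta$ rational and $\beta\bar h^\tr\neq0$, since the entries of each $\beta$ share a sign and $\bar h$ is positive. If such a coefficient vanished, then $t_\delta\in\QQ$. \Cref{lem:non_trivial} applied to the cycle $\bar\gamma'\delta$ would then make $(\bar hM_\delta,\bar mM_\delta)$, hence $(\bar h,\bar m)$, trivial. This settles your ``main obstacle'' in one line.

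It then yields, all in $V(\lambda)+\langle\bar h\rangle$:
\begin{enumerate}
\item $v_{s+r}$ and $v_0$, from $\delta_{\Rtop}$ and $\delta_{\Rbot}$;
\item the $v_k$, from the $\delta_k$;
\item $v_A+v_B$, from $\delta_{s,r}$ or $\delta_{0,r}$.
\end{enumerate}
Finally, $\langle\bar h,\lambda^\tr\rangle>0$ and $V(\lambda)\subseteq\langle\lambda\rangle^\ort$, so $(V(\lambda)\oplus\langle\bar h\rangle)\cap\langle\lambda\rangle^\ort=V(\lambda)$. This is the claimed inclusion, and it is exactly the paper's argument.
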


	\begin{proof}
		We write every linear relation given in \Cref{lem:linear_action} as
		\[
			z M_{\delta}
			= z + \sum_{i=1}^{\ell(\delta)} \beta_{\delta}^i z^\tr w_{\delta}^i, \quad
			\delta \in \cC,
		\]
		for some $1 \leq \ell(\delta) \leq 3$, $\beta_{\delta}^i \in \QQ^\cA$ and $w_{\delta}^i \in \langle v_0, v_1, \dotsc, v_{s+r}, v_A + v_B \rangle$ for $1 \leq i \leq \ell(\delta)$.
		Observe that since $\bar{h}$ is a positive row integer vector $\beta_{\delta}^i \bar{h}^\tr \neq 0$ for each $1 \leq i \leq \ell(\delta)$.
		From $\lambda \in \cS_{\bar{h}, \bar{m}}(\sigma_{s,r})$ we have $t(\lambda) \bar{h} + \bar{m} \in \Es(\sigma_{s,r}, \lambda) \oplus \langle c_{\sigma_{s,r}} \rangle$ and from $\lambda \in \cS_{\bar{h}M_{\delta}, \bar{m}M_{\delta}}(\sigma_{s,r})$
		\[
			t_\delta(\lambda) \bar{h} M_\delta + \bar{m} M_\delta
			= t(\lambda) \bar{h} + \bar{m} + (t_\delta(\lambda) - t(\lambda)) \bar{h} + \sum_{i=1}^{\ell(\delta)} \beta_{\delta}^i(t_\delta(\lambda) \bar{h} + \bar{m})^\tr w_{\delta}^i
		\]
		belongs to $\Es(\sigma_{s,r}, \lambda) \oplus \langle c_{\sigma_{s,r}} \rangle$, where $t_\delta(\lambda) = - \langle \bar{m} M_\delta, \lambda^\tr \rangle / \langle \bar{h} M_\delta, \lambda^\tr\rangle$.
		We have $\beta_{\delta}^i(t_\delta(\lambda) \bar{h} + \bar{m})^\tr \neq 0$ for each $1 \leq i \leq \ell(\delta)$, since otherwise $t_\delta(\lambda) = - \beta_{\delta}^i \bar{m}^\tr / \beta_{\delta}^i \bar{h}^\tr \in \QQ$ and from \Cref{lem:non_trivial} we would have that $(\bar{h}, \bar{m})$ is a trivial pair for $\sigma_{s,r}$.
		From this and \Cref{lem:linear_action} there exists a sequence $(u_\alpha \colon \alpha \in \cA)$ of row vectors in $\Es(\sigma_{s,r}, \lambda) \oplus \langle c_{\sigma_{s,r}} \rangle$ such that
		\begin{align*}
			v_0 &\in \langle \bar{h}, t(\lambda) \bar{h} + \bar{m}, u_0\rangle \\
			v_{s+r} &\in \langle \bar{h}, t(\lambda) \bar{h} + \bar{m}, u_{s+r}\rangle \\
			v_k &\in \langle \bar{h}, t(\lambda) \bar{h} + \bar{m}, u_{s+r}, u_k\rangle
			&&\quad 1 \leq k \leq s+r-1 \\
			v_A + v_B &\in \langle \bar{h}, t(\lambda) \bar{h} + \bar{m}, u_0, u_1, u_{s+r}, u_A, u_B\rangle
			&&\quad s \neq 0 \\
			v_A + v_B &\in \langle \bar{h}, t(\lambda) \bar{h} + \bar{m}, u_1, u_r, u_{s+r}, u_A, u_B\rangle
			&&\quad s = 0, r \neq 0.
		\end{align*}
		Then the fact that $\bar{h}$ does not belong to $\langle \lambda \rangle^\ort$ finishes the proof.
	\end{proof}

	The proof of the next lemma can be found in \cite[Section 5.2]{Arb18}.

	\begin{lem} \label{lem:linear_dependence}
		For $s, r \ge 0$ let $v_0, v_1, \dotsc, v_{s+r}, v_A, v_B$ be the rows of the matrix $\Omega_{\sigma_{s,r}}$.
		\begin{itemize}
			\item If $s$ and $r$ are odd, then $v_0, v_1, \dotsc, v_{s+r}, v_A$ are linearly independent and
			\[
				v_B
				= -v_A + 2v_s - 2 \sum_{i=1}^{s-1} (-1)^i v_i.
			\]
			\item If $s$ is odd and $r$ is even, then $v_0, v_1, \dotsc, v_{s+r}$ are linearly independent and
			\begin{align*}
				v_A
				&= -v_0 +2v_1 - 2\sum_{i=1}^{s-1} (-1)^i v_{i+1} + \sum_{i=1}^r (-1)^i v_{s+i}, \\
				v_B
				&= v_0 - \sum_{i=1}^r (-1)^i v_{s+i}.
			\end{align*}
			\item If $s$ is even and $r$ is odd, then $v_0, v_1, \dotsc, v_{s+r}, v_A, v_B$ are linearly independent.
			\item If $s$ and $r$ are even, then $v_0, v_1, \dotsc, v_{s+r}, v_A$ are linearly independent and 
			\[
				v_B
				= v_0 - \sum_{i=1}^r (-1)^i v_{s+i}.
			\]
		\end{itemize}
		In particular, we have
		\[
			\dim \langle v_0, v_1, \dotsc, v_{s+r}, v_A + v_B \rangle
			= \begin{cases}
			s+r+1 & \text{if $s$ is odd} \\
			s+r+2 & \text{if $s$ is even}
			\end{cases}
		\]
		and
		\[
			\dim H(\sigma_{s+r})
			= \begin{cases}
			s+r+1 & \text{if $s$ is odd and $r$ is even} \\
			s+r+2 & \text{if $s$ and $r$ are odd, or if $s$ and $r$ are even} \\
			s+r+3 & \text{if $s$ is even and $r$ is odd.} \\
			\end{cases}
		\]
	\end{lem}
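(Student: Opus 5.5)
The plan is a direct computation. I will write down $\Omega_{\sigma_{s,r}}$ explicitly, verify the stated relations coordinate by coordinate, and then obtain the ranks by computing the kernel of $\Omega_{\sigma_{s,r}}$.

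\textbf{Step 1: the word $\tau$.} Reading the bottom row of $\sigma_{s,r}$ backwards, then $\ast$, then the top row, gives
\[
\tau = (0, B, 1, \dotsc, s, B, s+1, \dotsc, s+r, \ast, 0, A, 1, \dotsc, s, A, s+1, \dotsc, s+r).
\]
Each numeric letter $k \in \{0, \dotsc, s+r\}$ occurs once before $\ast$ and once after it, so $i_k = k+2$ or so, and $j_k$ lies in the second half. The letter $B$ occurs twice before $\ast$, namely in the gap between $0$ and $1$ and in the gap between $s$ and $s+1$. The letter $A$ occurs twice after $\ast$, in the same two gaps.

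\textbf{Step 2: the entries of $\Omega_{\sigma_{s,r}}$.} From the definition of $\Omega_\pi$ I read off:
\begin{itemize}
\item for numeric letters $k<l$, the positions interlace, so $(\Omega)_{kl}=+1$;
\item $B$ nests around $1, \dotsc, s$ only in the first half, which gives $(\Omega)_{Bk}=+1$ for $1\le k\le s$;
\item $(\Omega)_{Bk}=+2$ for $k\ge s+1$, since $j_B<i_k$;
\item $(\Omega)_{B0}=0$, since the intervals are nested;
\item symmetric statements hold for $A$, with signs reversed because $A$ sits in the second half;
\item $(\Omega)_{BA}=+2$.
\end{itemize}
So the rows $v_0, \dotsc, v_{s+r}$ are rows of a ``staircase'' matrix with entries $\pm1$, plus the $A$ and $B$ columns. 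The key observation is that the consecutive differences $v_{k+1}-v_k$ are very sparse. They are supported on the coordinates $k$, $k+1$, and possibly $A$, $B$ when $k$ or $k+1$ crosses the gaps at $0|1$ or $s|s+1$.

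\textbf{Step 3: the identities.} Each identity in the statement can then be checked coordinate by coordinate. The alternating sums $\sum (-1)^i v_i$ telescope in pairs of these sparse differences, leaving only boundary terms. The parity of $s$ (respectively $r$) determines whether the last term survives. This is exactly why the relations, and the presence of $v_A$ and $v_B$ in them, depend on the parities.

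\textbf{Step 4: linear independence.} Rather than guessing, I will compute $\ker \Omega_{\sigma_{s,r}}$, that is, column vectors $x$ with $\Omega x=0$.
\begin{itemize}
\item Subtracting consecutive equations turns $\Omega x = 0$ into recursions of the form $x_k + x_{k+1} = $ (a contribution of $x_A, x_B$ that is constant on each block $[1,s]$ and $[s+1,s+r]$).
\item Hence on each block the $x_k$ alternate with a shift.
\item The boundary equations (rows $0$, $A$, $B$) then force either all variables to vanish, or leave a one- or two-parameter family, according to the parities of $s$ and $r$.
\end{itemize}
This gives $\dim\ker$, and therefore the rank $\dim H(\sigma_{s,r}) = (s+r+3) - \dim\ker$, in each of the four cases. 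Comparing with the relations from Step 3 shows that the relations found are all the relations, which proves the claimed independence of the listed rows.

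\textbf{Step 5: the dimension of $\langle v_0, \dotsc, v_{s+r}, v_A+v_B\rangle$.} I sum the stated formulas for $v_A+v_B$ in each case.
\begin{itemize}
\item For $s$ odd, $v_A+v_B$ lies in $\langle v_0, \dotsc, v_{s+r}\rangle$. When $r$ is odd this uses the independence of $v_0,\dotsc,v_{s+r},v_A$. So the dimension is $s+r+1$.
\item For $s$ even, $v_A+v_B$ lies outside that span. This follows from the independence of $v_0,\dotsc,v_{s+r},v_A$, with $v_B$ expressed without $v_A$ when $r$ is even, or from full independence when $r$ is odd. So the dimension is $s+r+2$.
\end{itemize}

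\textbf{Main obstacle.} The hard part is the sign and parity bookkeeping in Step 4, in particular handling the contributions of the $A$ and $B$ columns at the two gaps. Before writing the general argument, I would first sanity-check the parity patterns on small cases such as $(s,r)=(1,0),(1,1),(2,1),(2,2)$.
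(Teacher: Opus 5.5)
\textbf{Gap: one of the identities you propose to check is false.} Your strategy matches what the paper relies on. The paper gives no argument of its own; it cites the explicit computation in \cite[Section 5.2]{Arb18}. Carried out correctly, your computation does give the four ranks. But Step 3 cannot be completed as you state it. In the case $s$ odd, $r$ even, the displayed formula for $v_A$ is wrong as soon as $s\ge 3$.

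Take $(s,r)=(3,0)$ with coordinates ordered $(0,1,2,3,A,B)$. Then
\begin{itemize}
\item $v_0=(0,1,1,1,2,0)$,
\item $v_1=(-1,0,1,1,1,-1)$,
\item $v_2=(-1,-1,0,1,1,-1)$,
\item $v_3=(-1,-1,-1,0,1,-1)$,
\item $v_A=(-2,-1,-1,-1,0,-2)$.
\end{itemize}
The stated right-hand side is $-v_0+2v_1+2v_2-2v_3$. Its coordinate at the letter $2$ is $3$, not $-1$. In this case $v_0,\dots,v_{s+r}$ are independent, so the expansion of $v_A$ is unique and no bookkeeping can rescue the stated formula. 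The correct relation flips the sign of the middle sum:
\[
v_A=-v_0+2\sum_{i=1}^{s}(-1)^{i+1}v_i+\sum_{i=1}^{r}(-1)^i v_{s+i}.
\]
More generally, $v_A+v_B=2\sum_{i=1}^{s}(-1)^{i+1}v_i$ for every odd $s$ and every $r$. The other three identities are correct, and so are the rank statements and the two ``in particular'' dimension formulas, which are all that the paper uses later. So you must prove the corrected identity, not the stated one.

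\textbf{Step 2 error: the $A$-row.} Your entries for the $B$-row are right, though $B$ crosses the letters $1,\dots,s$ rather than nesting around them. The $A$-row is not the sign reversal of the $B$-row. In the coordinate blocks $(0;\,1,\dots,s;\,s+1,\dots,s+r;\,A;\,B)$ one has
\begin{itemize}
\item $v_B=(0;\,1,\dots,1;\,2,\dots,2;\,2;\,0)$,
\item $v_A=(-2;\,-1,\dots,-1;\,0,\dots,0;\,0;\,-2)$.
\end{itemize}
The roles of the letter $0$ and of the block $\{s+1,\dots,s+r\}$ are exchanged: the $A$-pair lies entirely after the $0$-pair and is nested inside each $k\ge s+1$. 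With the sign-reversed row, the coordinate checks in Step 3 would fail.

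\textbf{Step 4: the kernel recursion.} Write $\mathbf{e}_k$ for the standard basis row vectors.
\begin{itemize}
\item Inside each block, $v_{k+1}-v_k=-\mathbf{e}_k-\mathbf{e}_{k+1}$ exactly. So the recursion is $x_k+x_{k+1}=0$, with no $x_A$ or $x_B$ term.
\item The variables $x_A,x_B$ enter only at the two junctions: $v_1-v_0=-\mathbf{e}_0-\mathbf{e}_1-\mathbf{e}_A-\mathbf{e}_B$ and $v_{s+1}-v_s=-\mathbf{e}_s-\mathbf{e}_{s+1}-\mathbf{e}_A-\mathbf{e}_B$.
\item For $s=0$ the two junctions merge into $-\mathbf{e}_0-\mathbf{e}_1-2\mathbf{e}_A-2\mathbf{e}_B$.
\item Add the relation $v_B-v_0=\sum_{k>s}\mathbf{e}_k$ and the rows $v_0$ and $v_A$. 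Treat $r=0$, where $v_B=v_0$, separately.
\end{itemize}
This gives $\dim\ker\Omega_{\sigma_{s,r}}=1,2,0,1$ in the four cases, in the order they are listed. These are exactly the claimed ranks.
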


	\smallbreak

	We now finish the proof of \Cref{thm:main}.
	From \Cref{lem:lyapunov}, \Cref{lem:contracting} and \Cref{prop:inclusion} we deduce that there exists $\lambda \in \cS$ such that $\dim \Es_{\bar{\D}^\R}(\sigma_{s,r}, \lambda) \leq s+r+3 - g_{\sigma_{s,r}}$ and
	\[
		\langle v_0, v_1, \dotsc, v_{s+r}, v_A + v_B \rangle \cap \langle \lambda \rangle^\ort
		\subseteq \Es_{\bar{\D}^\R}(\sigma_{s,r}, \lambda) \oplus \langle c_{\sigma_{s,r}} \rangle.
	\]
	Therefore
	\[
		\dim \langle v_0, v_1, \dotsc, v_{s+r}, v_A + v_B \rangle - 1
		\leq s+r+3 - g_{\sigma_{s,r}} + 1.
	\]
	\begin{itemize}
		\item If $s$ and $r$ are odd, from \Cref{lem:linear_dependence} the above inequality reads $s+r \leq 6$, but if the genus $g = (s+r+2)/2$ is at least $5$ we obtain a contradiction.
		\item If $s$ is odd and $r$ is even, from \Cref{lem:linear_dependence} the above inequality reads $s+r \leq 7$, but if the genus $g = (s+r+1)/2$ is at least $5$ we obtain a contradiction.
		\item If $s$ is even and $r$ is odd, from \Cref{lem:linear_dependence} the above inequality reads $s+r \leq 7$, but if the genus $g = (s+r+1)/2$ is at least $5$ we obtain a contradiction.
		\item If $s$ and $r$ are even, from \Cref{lem:linear_dependence} the above inequality reads $s+r \leq 4$, but if the genus $g = (s+r)/2$ is at least $3$ we obtain a contradiction.
	\end{itemize}

	\bigbreak

	\textbf{Acknowledgements:} We are grateful to Fabien Durand, Rodolfo Gutiérrez--Romo, Erwan Lanneau and Arnaldo Nogueira for several fruitful discussions.
	The first is a postdoctoral researcher supported by the ANR Program ANR Project IZES ANR-22-CE40-0011.
    The second author is a postdoctoral researcher supported by the Fonds de la Recherche Scientifique -- FNRS.
	The third author acknowledges the ANID Basal Grant FB210005 Center for Mathematical Modeling and Grant ICN2021-044 from the ANID Millennium Science Initiative.

	\sloppy\printbibliography

\end{document}